\tikzstyle{terminator} = [rectangle, draw, text centered, rounded corners, minimum height=2em]
\def\namedlabel#1#2{\begingroup
    #2%
    \def\@currentlabel{#2}%
    \phantomsection\label{#1}\endgroup
}
\newcommand{\labitem}[2]{%
\def\@itemlabel{\textbf{#1}}
\item
\def\@currentlabel{#1}\label{#2}}
\def\namedlabel#1#2{\begingroup
   \def\@currentlabel{#2}%
   \label{#1}\endgroup
}
\subjclass[2020]{11R23 (primary); 11G05, 11R20 (secondary) }
\keywords{Iwasawa theory, Hecke characters, Hilbert modular forms, Mordell--Weil groups, ordinary primes, anticyclotomic extensions}
\numberwithin{equation}{section}
\title{On anticyclotomic Iwasawa Theory of Hecke Characters at ordinary primes}
\author{Erman I{\c S}IK}
\address{University of Ottawa, Department of Mathematics and Statistics, STEM Complex, 150 Louis-Pasteur Pvt, Ottawa, ON, K1N 6N5, Canada}
\email{\url{eisik@uottawa.ca}}
\urladdr{\url{https://sites.google.com/view/erman-isik/}}
\newcommand{\mylabel}[2]{#2\def\@currentlabel{#2}\label{#1}}
\DeclareFontFamily{U}{wncy}{}
    \DeclareFontShape{U}{wncy}{m}{n}{<->wncyr10}{}
    \DeclareSymbolFont{mcy}{U}{wncy}{m}{n}
    \DeclareMathSymbol{\Sh}{\mathord}{mcy}{"58}
\theoremstyle{definition}
\newtheorem{definition}{Definition}[section]
\newtheorem{lemma}[definition]{Lemma}
\newtheorem{theorem}[definition]{Theorem}
\newtheorem{prop}[definition]{Proposition}
\newtheorem{conj}[definition]{Conjecture}
\newtheorem{remark}[definition]{Remark}
\newtheorem{example}[definition]{Example}
\newtheorem{hyp}[definition]{Hypothesis}
\newtheorem{thmx}{Theorem}
\DeclareFontFamily{U}{wncy}{}
    \DeclareFontShape{U}{wncy}{m}{n}{<->wncyr10}{}
    \DeclareSymbolFont{mcy}{U}{wncy}{m}{n}
    \DeclareMathSymbol{\Sh}{\mathord}{mcy}{"58}
\DeclareSymbolFont{cyrletters}{OT2}{wncyr}{m}{n}
\DeclareMathSymbol{\Sha}{\mathalpha}{cyrletters}{"58}
\newcommand{\ccF}{\mathcal{F}}
\newcommand{\ccO}{\mathcal{O}}
\newcommand{\ccL}{\mathcal{L}}
\newcommand{\ccW}{\mathcal{W}}
\newcommand{\bbA}{{\mathbb A}}
\newcommand{\bbC}{{\mathbb C}}
\newcommand{\bbQ}{{\mathbb Q}}
\newcommand{\bbZ}{{\mathbb Z}}
\newcommand{\bbN}{{\mathbb N}}
\newcommand{\bbT}{{\mathbb T}}
\newcommand{\bbX}{{\mathbb X}}
\newcommand{\bfR}{{\mathbf R}}
\newcommand{\bbAt}{{\mathbb A}^{\times}}
\newcommand{\fraa}{{\mathfrak a}}
\newcommand{\fraf}{{\mathfrak f}}
\newcommand{\frah}{{\mathfrak h}}
\newcommand{\fran}{{\mathfrak n}}
\newcommand{\frap}{{\mathfrak p}}
\newcommand{\fraq}{{\mathfrak q}}
\newcommand{\frakF}{{\mathfrak F}}
\newcommand{\frakO}{{\mathfrak O}}
\newcommand{\frakP}{{\mathfrak P}}
\newcommand{\gal}{{\rm Gal}}
\newcommand{\Hom}{{\rm Hom}}
\newcommand{\Sel}{{\rm Sel}}
\newcommand{\ac}{{\rm ac}}
\newcommand{\cyc}{{\rm cyc}}
\newcommand{\ind}{{\rm Ind}}
\newcommand{\im}{{\rm Im}}
\newcommand{\can}{{\rm can}}
\newcommand{\Char}{{\rm char}}
\begin{document}
\maketitle

\begin{abstract}
   In this article we study the Iwasawa theory for Hecke characters associated with CM abelian varieties and Hilbert modular forms at ordinary primes.  We formulate and prove a result concerning the anticyclotomic Iwasawa main conjecture for CM Hilbert modular forms. Additionally, we obtain a result towards the study of the Mordell--Weil ranks of the CM abelian varieties.
\end{abstract}

\section{Introduction}

The main objective of this paper is to study the $p$-ordinary Iwasawa theory for Hecke characters attached to CM abelian varieties and Hilbert modular forms and thereby to extend the results of Agboola--Howard \cite{AH06}, Arnold \cite{AR07} and B{\"u}y{\"u}kboduk \cite{buy14}.

\vspace{1.2mm}

Let $K$ be a CM field with maximal real subfield $F$ of degree $2g$, and let $p$ be an odd rational prime unramified in $F/\bbQ$. Assume that the $p$-ordinary condition of Katz holds:
\begin{equation}
 \tag{$p$\textbf{-ord}} \label{eqn:ordinaryassumption}  \text{Every prime of } F \text{ above } p \text{ splits in } K. 
\end{equation}

When the Hecke character in question is attached to an elliptic curve $E/\bbQ$ with complex multiplication by an imaginary quadratic field $K$ and the sign of the functional equation is $+1$, the $p$-ordinary Iwasawa main conjecture for $E$ along the anticyclotomic $\bbZ_p$-extension of $K$ follows from the two-variable main conjecture proved by Rubin in \cite{Rub91}. When the sign is $-1$, Agboola and Howard proved in \cite{AH06} the anticyclotomic main conjecture and Arnold generalized their result in \cite{AR07} to higher-weight CM forms. The key ingredients in all these cases are the elliptic unit Euler system and the reciprocity laws of Coates--Wiles \cite{CW77,wil78} and Kato \cite{kato99}, as well as Rubin's two-variable main conjecture.  

\vspace{1.2mm}

 Rubin's result on the Iwasawa main conjecture for imaginary quadratic fields has been extended to various settings by many people utilizing different techniques \cite{ht94, hid09, mai08, hsi14, buy14}. For instance, in \cite{ht94}, Hida and Tilouine used the CM ideal method, exploiting congruences between CM forms and non-CM forms to prove the anticyclotomic Iwasawa main conjecture for general CM fields. The work of Hsieh in \cite{hsi14} on the main conjecture along the maximal $\bbZ_p$--power extension relies on the congruences between the Eisenstein series and cusp forms. B{\"u}y{\"u}kboduk studied this problem in \cite{buy14} through the conjectural Rubin–Stark elements by extending and refining the higher rank Euler/Kolyvagin system machinery (that was set up in \cite{buy10}, refining Perrin-Riou’s original approach in \cite{per98}).

 \vspace{1.2mm}

The goal of this article is to incorporate  Nekov\'a{\v r}'s descent formalism developed in \cite{Nek06} with the results of Hsieh \cite{hsi14} to study the $p$-ordinary Iwasawa main conjecture for Hecke characters over the anticyclotomic $\bbZ_p$-power extension of $K$ relying on some standard conjectures.

\subsection*{Notation and Hypotheses} Before we explain our results in greater detail, we set our notation that will be in effect throughout this paper.

\vspace{1.2mm}

 Given a group $G$, let $\mu(G)$ denote the torsion subgroup of $G$. For any number field $M$, we denote by $S_p(M)$ and $S_\infty(M)$ the set of places of $M$ above $p$ and $\infty$, respectively. For a finite set $S \supseteq S_\infty(M)$, we denote by $M_S$ the maximal extension of $M$ unramified outside $S$. Put $G_{M,S} :=\gal(M_S /M)$ and $G_M=\gal(\overline{M}/M)$.

\vspace{1.2mm}

Let $K_\infty$  be the compositum of the cyclotomic $\bbZ_p$-extension $K^{\rm cyc}$ and the anticyclotomic $\bbZ_p^g$-extension $K^{\rm ac}$ of $K$. If we assumed the Leopoldt conjecture for $K$, then $K_\infty$ would be the maximal $\bbZ_p$-power extension of $K$. Note that when $K/\bbQ$ is abelian the Leopoldt conjecture holds. (cf. \cite[Theorem 10.3.6]{neu08}). 

\vspace{1.2mm}

Let $\Gamma_K := \gal(K_\infty/K)$ (resp. $\Gamma_{\rm cyc} := \gal(K^{\rm cyc}/K)$, $\Gamma_{\rm ac} := \gal(K^{\rm ac}/K))$. Then $\Gamma_K $ is a free $\bbZ_p$-module of rank ${g+1}$. Fix a finite extension $\frakF$ of $\bbQ_p$ and denote its ring of integers by $\frakO$. Write $\Lambda(\Gamma_K):=\frakO[[\Gamma_K ]]$ for the Iwasawa algebra, which is isomorphic to the formal power series ring $\frakO[[X_1,\dots, X_{g+1}]]$ in $g + 1$ variables with coefficients in $\frakO$. We define $\Lambda(\Gamma_\cyc)$ and $\Lambda(\Gamma_\ac)$ similarly. 
For any ring $R$ that contains $\frakO$, we also set $\Lambda_R(\Gamma_K):=\Lambda(\Gamma_K)\otimes_\frakO R$.
\vspace{1.2mm}


Let $\chi: G_K \longrightarrow \frakO^\times$ be any non-trivial Dirichlet character whose order is prime to $p$. We let $L_\chi := \overline{K}^{\ker \chi}$ denote the abelian extension of $K$ cut out by $\chi$. Assume that $\chi$ has the property that
\begin{equation} \label{eqn:(1.1)} 
  \chi(\frap)\neq 1 \text{ for any prime } \frap \text{ of } K \text{ lying above } p.
\end{equation}

  and  
\begin{equation} \label{(1.2)}
    \chi \neq \omega, \text{ where } \omega \text{ denotes the Teichm{\" u}ller character}.
\end{equation}
  
 Note that \eqref{eqn:(1.1)} is equivalent to saying that $\frap$ does not split completely in $L_\chi/K$. In this article, we will work with a particular character $\chi$ attached to a CM abelian variety or a Hilbert modular form with CM. 

\vspace{1.2mm}

We fix a $p$-adic CM type $\Sigma$ of $K$, i.e. a subset $\Sigma\subset S_p(K)$ such that $\Sigma\cup \Sigma^c = S_p(K) $ and $\Sigma\cap\Sigma=\emptyset$, where $c$ denote the generator of $\gal(K/F)$. Let $M_\Sigma$ denote the maximal abelian pro-$p$-extension of $L_\infty:=L_\chi K_\infty$ unramified outside $\Sigma$ and let $X_\Sigma:=\gal\left(M_\Sigma/L_\infty\right)$. We put 
\begin{equation*}
    X_\Sigma^\chi:= e_\chi \left(\ccW \otimes_{\bbZ_p} X_\Sigma\right),
\end{equation*}
where $\ccW$ denotes the ring generated by the values of $\chi$ over the $p$-adic completion of the ring of integers of the maximal unramified extension of $\bbQ_p$ and  $e_\chi:=\frac{1}{[L_\chi:K]}\sum_{\sigma\in \gal(L_\chi/K)}\chi(\sigma)\cdot\sigma^{-1}$. By \cite[Theorem 1.2.2]{ht94},  $X_\Sigma^\chi$ is a finitely generated torsion module over $\Lambda(\Gamma_K)$.

\vspace{1.2mm}

 Attached to the $p$-ordinary CM-type $\Sigma$ and the character $\chi$, Katz \cite{kat78} and Hida--Tilouine \cite[Theorem II]{ht93} constructed a $p$-adic $L$-function $\ccL_{p,\Sigma}^{\chi}\in \Lambda_{\ccW}(\Gamma_K)$ that $p$-adically interpolates the algebraic parts (in the sense of \cite{shi75}) of the critical Hecke $L$-values for $\chi$ twisted by the characters of $\Gamma_K$ (see \cite{ht93}). 

 \vspace{1.2mm}

 The following is the Iwasawa Main Conjecture for the CM field $K$ and the character $\chi$ (see \cite{ht94}).

\begin{conj}(Iwasawa Main Conjecture)\label{IMCforCM} ---
    The characteristic ideal of $X_\Sigma^\chi$ is generated by the Katz $p$-adic $L$-function :
\begin{equation*}
    \Char_{\Lambda_{\ccW}(\Gamma_K)}\left( X_\Sigma^\chi \right)=\left(\ccL_{p,\Sigma}^\chi\right).
\end{equation*}
\end{conj}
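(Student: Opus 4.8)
The statement is a theorem in several cases --- for $K$ imaginary quadratic by Rubin \cite{Rub91}, and for general CM fields, under regularity hypotheses of the type \eqref{eqn:(1.1)}--\eqref{(1.2)}, by Hida--Tilouine \cite{ht94} and Hsieh \cite{hsi14} --- and the plan is to follow that circle of ideas, proving the two divisibilities of the asserted equality of ideals in $\Lambda_{\ccW}(\Gamma_K)$ separately. By \cite[Theorem 1.2.2]{ht94} the module $X_\Sigma^\chi$ is finitely generated and torsion over $\Lambda(\Gamma_K)$, so $\Char_{\Lambda_{\ccW}(\Gamma_K)}(X_\Sigma^\chi)$ is well defined; the first step is to reinterpret the Galois group $X_\Sigma=\gal(M_\Sigma/L_\infty)$, via Poitou--Tate global duality together with the $\Sigma\leftrightarrow\Sigma^c$ reflection coming from complex conjugation, as the Pontryagin dual of a Selmer group attached to the Galois character cut out by $\chi$, so that the Euler system formalism applies.

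For the first divisibility $\Char_{\Lambda_{\ccW}(\Gamma_K)}(X_\Sigma^\chi)\mid(\ccL_{p,\Sigma}^\chi)$, I would build the Euler system of elliptic units in ray class fields of $K$ --- norm-compatible Siegel units, or in the Hilbert modular setting the elliptic units arising from CM points --- whose image under the Coleman map is identified, by the explicit reciprocity laws of Coates--Wiles \cite{CW77,wil78} and Kato \cite{kato99}, with the Katz $p$-adic $L$-function $\ccL_{p,\Sigma}^\chi$. Feeding this Euler system into the Kolyvagin derivative construction and combining it with a Chebotarev argument bounds the characteristic ideal of the Selmer module, hence of $X_\Sigma^\chi$, by $(\ccL_{p,\Sigma}^\chi)$. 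The hypotheses \eqref{eqn:(1.1)} and \eqref{(1.2)} --- equivalently, that $\frap$ does not split completely in $L_\chi/K$, and that $\chi\neq\omega$ --- enter here to exclude trivial zeros of the $p$-adic $L$-function and to keep the local conditions at the primes above $p$ well behaved.

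For the reverse divisibility $(\ccL_{p,\Sigma}^\chi)\mid\Char_{\Lambda_{\ccW}(\Gamma_K)}(X_\Sigma^\chi)$, which is the deep input, I would argue through congruences. One option is the CM ideal method of Hida--Tilouine \cite{ht94}, in which $\ccL_{p,\Sigma}^\chi$ measures congruences between the CM family attached to $\chi$ and non-CM Hilbert modular forms and these congruences manufacture enough classes in $X_\Sigma^\chi$; the route I would prefer is Hsieh's \cite{hsi14}, where $\ccL_{p,\Sigma}^\chi$ appears, up to a unit and a suitable period, as the constant term of a $p$-adic Hermitian Eisenstein series on a definite unitary group, and congruences between that Eisenstein series and cusp forms, together with the associated $p$-ordinary Galois representations and a lattice argument of Ribet--Wiles--Urban--Skinner type, produce the unramified classes realizing the required lower bound. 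For $K$ imaginary quadratic this divisibility can instead be extracted, following Rubin, from the analytic class number formula, which pins down the order of a cohomology group at the cyclotomic specialization in terms of elliptic units and, combined with the Euler system inequality, upgrades it to an equality.

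Combining the two divisibilities (or, in the imaginary quadratic case, the Euler system bound with the class number formula) yields the equality of ideals. The hard part is the reverse divisibility: it requires the non-vanishing mod $p$ of the relevant Hecke $L$-values, so that the Eisenstein series is genuinely congruent to a cusp form; the construction of Galois representations attached to the congruent cuspidal automorphic forms on the unitary group with the correct ordinary behaviour at the primes of $\Sigma$ and the prescribed ramification elsewhere; and the delicate irreducibility and lattice argument converting these congruences into unramified extensions. All of this leans on the $p$-ordinary hypothesis \eqref{eqn:ordinaryassumption} and on the regularity conditions \eqref{eqn:(1.1)}--\eqref{(1.2)}, whereas the Euler system divisibility, once the explicit reciprocity law has been set up in the Hilbert modular setting, is essentially routine.
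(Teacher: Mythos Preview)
The statement you are attempting to prove is not a theorem in the paper but a \emph{conjecture} (Conjecture~\ref{IMCforCM}). The paper does not supply its own proof; it merely records the known cases (Rubin for $K$ imaginary quadratic, Hsieh under the hypotheses of Theorem~\ref{hsiehstheorem}) and then \emph{assumes} the conjecture, or rather its consequence via Hsieh's theorem, as input for the descent arguments in later sections. So there is no ``paper's own proof'' to compare your sketch against.

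That said, your outline contains a substantive inaccuracy worth flagging. For general CM fields $K$ there is no Euler system of elliptic units available: elliptic units live in ray class fields of imaginary quadratic fields, and the analogue for higher-degree CM fields is the (still conjectural) system of Rubin--Stark elements. This is precisely why B\"uy\"ukboduk's approach in \cite{buy14}, cited in the introduction, is conditional on the Rubin--Stark conjecture, and why Hsieh's proof in \cite{hsi14} proceeds entirely on the automorphic side (Eisenstein congruences on unitary groups) rather than via an Euler system divisibility. Your claim that ``the Euler system divisibility \ldots\ is essentially routine'' once a reciprocity law is in place is therefore misleading for $g>1$: the required norm-compatible elements are not known to exist unconditionally. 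Hsieh obtains \emph{both} divisibilities from the Eisenstein congruence method together with the anticyclotomic $\mu$-invariant computation, not from a two-sided Euler-system/congruence argument as you describe.
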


\begin{remark} ---
    When $K$ is an imaginary quadratic field, Conjecture \ref{IMCforCM} is proved by Rubin in \cite[Theorem 4.1 (i)]{Rub91}.
\end{remark}

In general, Hsieh obtained the following result in \cite{hsi14} towards the validity of Conjecture \ref{IMCforCM}.

\begin{theorem}[Corollary 2, \cite{hsi14}]\label{hsiehstheorem} ---
    Assume that the following hypotheses hold:
    \begin{enumerate}[label=(\roman*)]
    \item $p>5$ is prime to the minus part of the class number of $K$, to the order of $\chi$, and is unramified in $F/\bbQ$.
    
    \item $\chi$ is unramified in $\Sigma^c$ and $\chi\omega^{-a}$ is unramified at $\Sigma$ for some integer $a\not\equiv 2 \mod{p-1}$.

    \item $\chi$ is \textit{anticyclotomic} in the sense that $\chi(c\delta c^{-1})=\chi(\delta)^{-1}$ for $\delta\in\Delta$ and $c\in G_{F}$ that restricts to the generator of $\gal(K/F)$.

    \item $\chi(\frap)\neq 1$ for any $\frap\in\Sigma$ .
    \item The restriction of $\chi$ to $G_{K\left(\sqrt{p^*}\right)}$ (where $p^*=(-1)^{\tfrac{p-1}{2}}p$) is nontrivial.
    
\end{enumerate}
Then Conjecture \ref{IMCforCM} holds.
\end{theorem}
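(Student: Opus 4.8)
The plan is to deduce the asserted equality of ideals in the regular local ring $\Lambda_{\ccW}(\Gamma_K)$ (hence a UFD) from two opposite divisibilities. By global class field theory and Poitou--Tate duality, $X_\Sigma^\chi$ is, up to pseudo-isomorphism, the Pontryagin dual of a Greenberg-style Selmer group over $L_\infty$ attached to the CM character $\chi$, with the relaxed local condition at $\Sigma$ and the strict one at $\Sigma^c$; under \eqref{eqn:ordinaryassumption} these are precisely the Panchishkin conditions, so the module is of a shape accessible both to an anticyclotomic Euler system and to Eisenstein congruences. The divisibility $\Char_{\Lambda_{\ccW}(\Gamma_K)}\bigl(X_\Sigma^\chi\bigr)\mid\ccL_{p,\Sigma}^\chi$ I would take from the Euler system of elliptic units: Rubin's argument for imaginary quadratic $K$, together with subsequent work extending the elliptic-unit Euler/Kolyvagin machinery to CM fields, gives the expected upper bound on the Selmer group. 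Conditions (i)--(iv) enter here to guarantee that the Selmer group is $\Lambda(\Gamma_K)$-torsion with the anticipated characteristic ideal and that there are no trivial-zero contributions at the primes of $\Sigma$, which is where $\chi(\frap)\neq 1$ for $\frap\in\Sigma$ and \eqref{eqn:(1.1)} get used.

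The substance is the reverse divisibility $\ccL_{p,\Sigma}^\chi\mid\Char_{\Lambda_{\ccW}(\Gamma_K)}\bigl(X_\Sigma^\chi\bigr)$, and here I would run an Eisenstein-congruence argument on a definite unitary group $U=U(2)$ over $F$ attached to $K/F$ (using hypothesis (iii), that $\chi$ is anticyclotomic, to put ourselves in the self-dual situation on $U$; equivalently, after Jacquet--Langlands, one may work in a quaternionic Hilbert-modular incarnation). First construct a Hida family of ordinary Siegel--Eisenstein series on $U$, induced from the character determined by $\chi$ and the $p$-adic CM type $\Sigma$, normalised so that its constant term along the Siegel parabolic is a unit multiple of the Katz--Hida--Tilouine $p$-adic $L$-function $\ccL_{p,\Sigma}^\chi$; establishing this identification is the first serious computation, since it requires matching the local constant-term integrals at the ramified primes and the archimedean CM-period normalisation against Katz's interpolation formula. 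Next show that this Eisenstein family is \emph{primitive} -- that its reduction modulo $\mathfrak{m}_{\ccW}$ is a nonzero $p$-adic modular form -- which amounts to the vanishing of the $\mu$-invariant of $\ccL_{p,\Sigma}^\chi$, equivalently a mod-$p$ non-vanishing of Hecke $L$-values; it is here that hypotheses (i) ($p$ prime to the minus class number), (ii), and (v) (nontriviality of $\chi|_{G_{K(\sqrt{p^*})}}$) become indispensable, as they exclude the degenerate cases of the Ferrero--Washington-type argument of Hsieh.

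Granting primitivity, the Eisenstein ideal of the localised ordinary Hecke algebra of $U$ contains $\ccL_{p,\Sigma}^\chi$, so there is a Hida family of cusp forms on $U$ congruent to the Eisenstein family modulo $\ccL_{p,\Sigma}^\chi$. Passing to the associated two-dimensional family of Galois representations of $G_F$ -- residually reducible, with semisimplification the sum of $\mathbf{1}$ and the representation induced from $\chi$ -- a lattice-optimisation argument of Ribet--Wiles--Urban type produces a nonzero class in $H^1(G_{F,S},\,\cdot\,)$ whose restrictions at $\Sigma$ and $\Sigma^c$ satisfy the Greenberg conditions forced by \eqref{eqn:ordinaryassumption}, hypothesis (iv) being what prevents the class from being killed by a trivial zero. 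Carrying this out over all of $\Gamma_K$ and comparing with the congruence module then gives $\ccL_{p,\Sigma}^\chi\mid\Char_{\Lambda_{\ccW}(\Gamma_K)}\bigl(X_\Sigma^\chi\bigr)$; combined with the Euler-system divisibility this yields Conjecture \ref{IMCforCM}. (Alternatively one may replace the unitary-group step by the CM-ideal method of Hida--Tilouine, using congruences between CM and non-CM Hilbert modular forms, but it rests on the same mod-$p$ non-vanishing input.) The step I expect to be the main obstacle is the simultaneous control of the exact constant term of the Eisenstein family and of its primitivity: both turn on delicate automorphic period and $L$-value computations, and it is exactly to make primitivity work that the technical hypotheses (i), (ii) and (v) are imposed.
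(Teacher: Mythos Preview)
The paper does not prove this theorem; it is quoted verbatim as Corollary~2 of \cite{hsi14} and used as a black-box input throughout. There is no argument in the present paper to compare your sketch against.

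That said, your outline has a genuine gap on the ``Euler system'' side. For a general CM field $K$ there is no unconditional Euler system of elliptic units: elliptic units are available only when $K$ is imaginary quadratic, and the natural substitute in higher rank --- the Rubin--Stark elements --- is conjectural (this is precisely why \cite{buy14}, mentioned in the introduction, is conditional on their existence). So the divisibility $\Char_{\Lambda_\ccW(\Gamma_K)}(X_\Sigma^\chi)\mid(\ccL_{p,\Sigma}^\chi)$ cannot be obtained as you describe. In Hsieh's actual argument that divisibility is supplied not by an Euler system but by the CM-congruence method of Hida--Tilouine \cite{ht94} on the anticyclotomic tower, combined with Hida's results on anticyclotomic $\mu$-invariants and a specialization/control argument to pass from $\Gamma_\ac$ to $\Gamma_K$; hypotheses (i)--(v), especially $p\nmid h_K^-$ in (i), are calibrated so that this comparison goes through without loss.

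A smaller point: the Eisenstein-congruence step in \cite{hsi14} is carried out on the quasi-split unitary group $U(2,1)$ attached to $K/F$, not on a definite $U(2)$; the relevant Eisenstein series live on $U(2,1)$ and their constant terms along the Borel are what carry $\ccL_{p,\Sigma}^\chi$. Your description of the primitivity/$\mu=0$ input and of the Ribet--Wiles--Urban lattice construction is otherwise in the right spirit.
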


\subsection*{Statements of the results}  Let $f$ be a normalized Hilbert newform of parallel (even) weight $(k,\dots,k)$, level $\fran\subset \ccO_F$ with complex multiplication by $K$. Then there is a Hecke character $\varphi$ of $K$ of infinity type $(k-1)\sum_{\sigma\in \widetilde{\Sigma}}\sigma$ for some CM type $\widetilde{\Sigma}$ of $K$ such that $f$ is the cusp from associated to $\varphi$ (see \cite{hida79}). Fix an embedding $\iota_p : \overline{\bbQ} \xhookrightarrow{} \bbC_p$ . Then $\widetilde{\Sigma}$ and $\iota_p$ give rise to the $p$-adic CM-type $\Sigma$.

\vspace{1.2mm}
 
Let $c$ denote the involution on $G_K$ induced by complex conjugation. As in $\S 2.1$ of \cite{AR07}, we assume further that $\overline{\varphi}\circ c=\varphi$, so that the sign of the functional equation for the Hecke $L$-series attached to $f$ is $\pm1$.

\vspace{1.2mm}

Let $K_f$ denote the number field generated by the Fourier coefficients of $f$. Assume that $p$ does not ramify in $K/\bbQ$ and $K_f/\bbQ$, and $f$ is $p$-ordinary. Let $\widetilde{H}_{f}^2\left(G_{K,S},\bbT_\chi^{\ac,\iota};\Delta_\Sigma\right)$ denote the extended Selmer group attached to the representation $\bbT_\chi^{\ac,\iota}$ (cf. Definition \ref{selmer_complex}), and ${\rm Reg}(\bbT_\chi^{\rm ac,\iota})$ denote the $\Lambda(\Gamma_\ac)$-adic regulator given as in Definition \ref{descend_defn} (vi).  

\vspace{1.2mm}

We write the power series expansion
\begin{equation*}
    \ccL_{p,\Sigma}^\chi:= \ccL_0+\ccL_1\cdot(\gamma_\cyc-1)+\ccL_2\cdot(\gamma_\cyc-1)^2+\dots \in \Lambda_{\ccW}(\Gamma_{\rm ac}) \widehat{\otimes} \Lambda_{\ccW}(\Gamma_{\rm cyc})
\end{equation*}
where $\gamma_{\rm cyc}$ is a fixed topological generator of $\Gamma_{\rm cyc}$, and $\ccL_i$ are elements of $\Lambda_\ccW(\Gamma_\ac)$.

\begin{thmx}\label{maintheoremA} ---
    Assume the hypotheses of Theorem \ref{hsiehstheorem} as well as Hypothesis \ref{hypothesisonfiniteness}. If $\varphi\overline{\varphi}=\bbN^{k-1}$,  ${\rm Reg}(\bbT_\chi^{\rm ac,\iota})\neq 0$ and the sign in the functional equation of $f$ is $-1$, then we have
    \begin{equation*}
    {\rm char}_{\Lambda_{\ccW}(\Gamma_{\rm ac})}\Big(\widetilde{H}_{f}^2\left(G_{K,S},\bbT_\chi^{\ac,\iota};\Delta_\Sigma\right)_{\rm tor} \Big)\cdot{\rm Reg}(\bbT_\chi^{\rm ac,\iota})=\left(\ccL_1\right).
\end{equation*}
\end{thmx}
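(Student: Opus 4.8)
The plan is to deduce Theorem~\ref{maintheoremA} from Hsieh's two-variable main conjecture (Theorem~\ref{hsiehstheorem}) by a first-order descent along the cyclotomic direction, carried out in Nekov\'a{\v r}'s Selmer-complex formalism \cite{Nek06}. The guiding principle is that, because $\varphi\overline{\varphi}=\bbN^{k-1}$ places us at the central critical twist and renders $\bbT_\chi$ self-dual up to the cyclotomic twist — so that $\Reg(\bbT_\chi^{\ac,\iota})$ is the determinant of a Bockstein pairing on the single module $\widetilde{H}^1_f(\bbT_\chi^{\ac,\iota};\Delta_\Sigma)$ — and because the sign of the functional equation of $f$ is $-1$, the restriction of the Katz $p$-adic $L$-function to the anticyclotomic line vanishes identically; the first non-trivial information is then carried by its cyclotomic derivative $\ccL_1$, which is to be matched against the regulator.

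First I would recast Theorem~\ref{hsiehstheorem}. Under its hypotheses, Conjecture~\ref{IMCforCM} gives $\Char_{\Lambda_{\ccW}(\Gamma_K)}(X_\Sigma^\chi)=(\ccL_{p,\Sigma}^\chi)$. Combining this with Poitou--Tate duality and the standard comparison of $X_\Sigma^\chi$ with $\widetilde{H}^2_f(G_{K,S},\bbT_\chi^\iota;\Delta_\Sigma)$ over $\Lambda_{\ccW}(\Gamma_K)$ (cf.\ \cite{ht94}, \cite[Ch.~8--9]{Nek06}), and using \eqref{eqn:(1.1)} and \eqref{(1.2)} — which annihilate $\widetilde{H}^0_f$ and $\widetilde{H}^3_f$ and, through the Euler characteristic formula for Selmer complexes and the $\Lambda(\Gamma_K)$-torsionness of $X_\Sigma^\chi$, also $\widetilde{H}^1_f$ — I would obtain
\[ \widetilde{H}^1_f\!\left(G_{K,S},\bbT_\chi^\iota;\Delta_\Sigma\right)=0,\qquad \Char_{\Lambda_{\ccW}(\Gamma_K)}\!\Big(\widetilde{H}^2_f\!\left(G_{K,S},\bbT_\chi^\iota;\Delta_\Sigma\right)\Big)=\left(\ccL_{p,\Sigma}^\chi\right). \]
Next I would record that $\ccL_0=0$: the functional equation of $\ccL_{p,\Sigma}^\chi$, together with $\varphi\overline{\varphi}=\bbN^{k-1}$ and the anticyclotomic property of $\chi$, forces the image of $\ccL_{p,\Sigma}^\chi$ in $\Lambda_{\ccW}(\Gamma_\ac)=\Lambda_{\ccW}(\Gamma_K)/(\gamma_\cyc-1)$ to equal its image under the involution of $\Gamma_\ac$ times the global root number $-1$, hence to vanish; thus $\ccL_{p,\Sigma}^\chi\in(\gamma_\cyc-1)\Lambda_{\ccW}(\Gamma_K)$ with $\ccL_{p,\Sigma}^\chi\equiv\ccL_1\cdot(\gamma_\cyc-1)\pmod{(\gamma_\cyc-1)^2}$ (cf.\ \cite{AH06,AR07} for this step in the classical setting).

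The core of the proof is the descent. I would apply the derived-specialization and Bockstein machinery of \cite[Ch.~11]{Nek06} to the Selmer complex, which is perfect over $\Lambda_{\ccW}(\Gamma_K)$, relative to the surjection $\Lambda_{\ccW}(\Gamma_K)\twoheadrightarrow\Lambda_{\ccW}(\Gamma_\ac)$, under which $\bbT_\chi^\iota$ specializes to $\bbT_\chi^{\ac,\iota}$. Since $\widetilde{H}^1_f=\widetilde{H}^3_f=0$ over $\Lambda_{\ccW}(\Gamma_K)$, the control exact sequences collapse to isomorphisms $\widetilde{H}^1_f(\bbT_\chi^{\ac,\iota};\Delta_\Sigma)\cong\widetilde{H}^2_f(\bbT_\chi^\iota;\Delta_\Sigma)[\gamma_\cyc-1]$ and $\widetilde{H}^2_f(\bbT_\chi^{\ac,\iota};\Delta_\Sigma)\cong\widetilde{H}^2_f(\bbT_\chi^\iota;\Delta_\Sigma)/(\gamma_\cyc-1)$, and the connecting Bockstein gives a homomorphism
\[ \beta:\ \widetilde{H}^1_f(\bbT_\chi^{\ac,\iota};\Delta_\Sigma)\longrightarrow \widetilde{H}^2_f(\bbT_\chi^{\ac,\iota};\Delta_\Sigma)\otimes_{\Lambda_{\ccW}(\Gamma_\ac)}\big((\gamma_\cyc-1)/(\gamma_\cyc-1)^2\big), \]
whose determinant is, by Definition~\ref{descend_defn}(vi), $\Reg(\bbT_\chi^{\ac,\iota})$. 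The hypothesis $\Reg(\bbT_\chi^{\ac,\iota})\neq0$ makes this first-order descent non-degenerate: it pins the order of vanishing of $\ccL_{p,\Sigma}^\chi$ along $(\gamma_\cyc-1)$ to exactly $1$ and the $\Lambda_{\ccW}(\Gamma_\ac)$-rank of $\widetilde{H}^1_f(\bbT_\chi^{\ac,\iota};\Delta_\Sigma)$ to exactly $1$. Comparing characteristic ideals through the two control isomorphisms — with Hypothesis~\ref{hypothesisonfiniteness} controlling the ancillary pseudo-null modules produced by the specialization so that they do not affect characteristic ideals — one identifies the image of $\ccL_{p,\Sigma}^\chi/(\gamma_\cyc-1)$ in $\Lambda_{\ccW}(\Gamma_\ac)$, which by the previous step is $\ccL_1$, with $\Char_{\Lambda_{\ccW}(\Gamma_\ac)}\!\big(\widetilde{H}^2_f(G_{K,S},\bbT_\chi^{\ac,\iota};\Delta_\Sigma)_{\tor}\big)\cdot\Reg(\bbT_\chi^{\ac,\iota})$, which is the assertion.

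The main obstacle is making the descent bookkeeping airtight. One must verify that the Selmer complex is perfect of the expected amplitude so that Nekov\'a{\v r}'s leading-term formalism applies; that the finite and pseudo-null submodules appearing on reduction modulo $\gamma_\cyc-1$ are genuinely invisible to characteristic ideals, which is the intended role of Hypothesis~\ref{hypothesisonfiniteness}; that $\Reg(\bbT_\chi^{\ac,\iota})\neq0$ indeed forces $\ord_{(\gamma_\cyc-1)}\ccL_{p,\Sigma}^\chi=1$ — so that $\ccL_1$, rather than some higher coefficient, is the leading term — with $\corank_{\Lambda_{\ccW}(\Gamma_\ac)}\widetilde{H}^1_f(\bbT_\chi^{\ac,\iota};\Delta_\Sigma)=1$ correspondingly; and that the abstract Bockstein pairing $\beta$ coincides with the regulator of Definition~\ref{descend_defn}(vi). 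Finally, upgrading the divisibility that emerges from the algebraic side to the asserted equality of ideals relies on Theorem~\ref{hsiehstheorem} providing an equality in Conjecture~\ref{IMCforCM}, not merely a one-sided containment, so that no characteristic ideal is lost in the descent.
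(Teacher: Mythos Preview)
Your approach is essentially the same as the paper's: reformulate Hsieh's theorem in Selmer-complex language, apply Nekov\'a{\v r}'s cyclotomic descent from \cite[\S11]{Nek06}, and use the sign $-1$ functional equation to kill $\ccL_0$ so that the leading term is $\ccL_1$. The paper packages the descent as a direct appeal to \cite[11.7.11]{Nek06} after checking four hypotheses, whereas you unwind the Bockstein more explicitly; but the substance is the same.

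One point deserves correction, however: you misidentify the role of Hypothesis~\ref{hypothesisonfiniteness}. You describe it as ``controlling the ancillary pseudo-null modules produced by the specialization so that they do not affect characteristic ideals.'' That is not how the paper uses it, and pseudo-null error terms are not the issue in Nekov\'a{\v r}'s formula. Rather, Hypothesis~\ref{hypothesisonfiniteness} is used (via a Nakayama argument along the anticyclotomic tower, Proposition~\ref{prop_rank}) to prove that
\[
\rank_{\Lambda(\Gamma_\ac)}\widetilde{H}^1_f\!\left(G_{K,S},\bbT_\chi^{\ac,\iota};\Delta_\Sigma\right)\leq 1.
\]
This is exactly the input you flag as an obstacle but do not resolve: without it, $\Reg(\bbT_\chi^{\ac,\iota})\neq 0$ only tells you that the order of vanishing of $\ccL_{p,\Sigma}^\chi$ along $(\gamma_\cyc-1)$ equals the anticyclotomic rank of $\widetilde{H}^1_f$, not that this common value is~$1$. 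The paper's logic is: rank $\leq 1$ (from Hypothesis~\ref{hypothesisonfiniteness}) plus $\Reg\neq 0$ forces $r\big(\widetilde{H}^2_f(\bbT_\chi^\iota)\big)\leq 1$; combined with $\ccL_0=0$ this gives $r=1$ and hence $\partial^*_\cyc\ccL_{p,\Sigma}^\chi=\ccL_1$. Your sketch needs this step, and Hypothesis~\ref{hypothesisonfiniteness} is where it comes from.

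A smaller remark: you assert $\widetilde{H}^1_f(G_{K,S},\bbT_\chi^\iota;\Delta_\Sigma)=0$ over $\Lambda_\ccW(\Gamma_K)$, but the paper only establishes (and only needs) that it is torsion. The stronger vanishing may well hold, but your justification via the Euler characteristic alone gives only that its rank is zero.
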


The next result will be related to the Mordell--Weil ranks of abelian varieties with CM.  Let $L$ be a finite abelian extension of the CM field $K$ with $\Delta:=\gal(L/K)$.  Fix an odd rational prime $p$ that is not ramified in $L/\bbQ$ and not dividing $|\Delta|$. Let $A$ be a polarized simple abelian variety of dimension $g$ defined over $L$ with CM-type $(K,\Sigma)$, i.e. there exists $K\xrightarrow{\;\sim\;} {\rm End }(A)_\bbQ$. Assume that $A$ has good ordinary reduction at $p$

\begin{thmx}\label{maintheoremB} ---
     Let $A/L$ be a simple abelian variety with CM-type $(K,\Sigma)$ such that $K(A_{\rm tor})\subset L$. Assume that the hypotheses of Theorem~\ref{maintheorem} and the non-anomaly condition (see \eqref{non-anomaly_cond}) hold true. If $L(A/L, 1)\neq 0$, then both $A(L)$ and $\Sha(A/L)[\frap^\infty]$ are finite.
\end{thmx}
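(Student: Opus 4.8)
\emph{Proof plan.} The idea is to deduce finiteness from the anticyclotomic Iwasawa Main Conjecture --- available here thanks to Theorem \ref{hsiehstheorem} together with the descent machinery of Theorem \ref{maintheorem} --- by specializing the relevant Iwasawa module down to $L$ and reading off the non-vanishing of the classical $L$-value from the interpolation property of the Katz $p$-adic $L$-function. This is the analogue, for CM abelian varieties, of the Coates--Wiles and Rubin theorems on the rank-zero case for CM elliptic curves, and it complements Theorem \ref{maintheoremA}, which treats the sign $-1$ situation in which $L(A/L,1)$ vanishes automatically.

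First I would translate the hypothesis on $A$ into one about Hecke characters. Since $A/L$ is simple with CM-type $(K,\Sigma)$, has good ordinary reduction at $p$, and $K(A_{\tor})\subset L$, the assumption \eqref{eqn:ordinaryassumption} forces each $\frap\mid p$ to split in $K/F$, and the $p$-adic Tate module of $A$, restricted to $G_L$, splits into rank-one pieces on which $G_L$ acts through Hecke characters; accordingly $L(A/L,s)$ factors, up to finitely many Euler factors (kept away from $s=1$ by the non-anomaly condition \eqref{non-anomaly_cond}), as $\prod_{\chi\in\widehat\Delta}L(\varphi\chi,s)$, where $\varphi$ is the Hecke character of $K$ attached to the CM abelian variety underlying $A$ and $\chi$ ranges over the characters of $\Delta=\gal(L/K)$. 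Hence $L(A/L,1)\neq0$ gives $L(\varphi\chi,1)\neq0$ for every $\chi\in\widehat\Delta$; since $p\nmid|\Delta|$, the $\chi$-isotypic decomposition is exact after extending scalars, so it suffices to bound the $\chi$-parts of $A(L)\otimes\bbQ_p/\bbZ_p$ and of $\Sha(A/L)[\frap^\infty]$ for each $\chi$ satisfying \eqref{eqn:(1.1)}--\eqref{(1.2)}.

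Next comes the descent. For such a $\chi$, the $\chi$-part of the $\frap^\infty$-Selmer group of $A/L$ is identified, via Nekov\'a{\v r}'s Selmer-complex formalism and Theorem \ref{maintheorem}, with the specialization at the trivial character of $\Gamma_K$ of the $\Lambda(\Gamma_K)$-adic Selmer group whose Pontryagin dual is pseudo-isomorphic to $X_\Sigma^\chi$; the ordinarity of $A$ and the non-anomaly condition make the $\Sigma$-local conditions at the primes above $p$ have vanishing $H^0$ with $\frap$-torsion coefficients, so the control map is injective with finite cokernel and no extra error term appears. By Theorem \ref{hsiehstheorem}, $\Char_{\Lambda_\ccW(\Gamma_K)}(X_\Sigma^\chi)=(\ccL_{p,\Sigma}^\chi)$, so this specialized dual Selmer group is finite as soon as $\ccL_{p,\Sigma}^\chi$ is nonzero at the arithmetic point corresponding to $s=1$. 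The Katz--Hida--Tilouine interpolation formula identifies that value, up to a nonzero CM period and a product of local Euler-type factors (nonzero --- in fact $p$-adic units --- by hypothesis (iv) of Theorem \ref{hsiehstheorem} and the non-anomaly condition), with the algebraic part of $L(\varphi\chi,1)$, which is nonzero by the first step. Thus each $\chi$-component of $A(L)\otimes\bbQ_p/\bbZ_p$ and of $\Sha(A/L)[\frap^\infty]$ is finite; summing over $\widehat\Delta$ and invoking the Mordell--Weil theorem shows $A(L)$ and $\Sha(A/L)[\frap^\infty]$ are finite.

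The step I expect to be the main obstacle is the descent itself: one must show, in the language of Selmer complexes, that passing from $X_\Sigma^\chi$ over $\Lambda(\Gamma_K)$ to the Selmer group at finite level over $L$ introduces neither an uncontrolled kernel nor an uncontrolled cokernel, and in particular that the local conditions indexed by $\Sigma$ specialize correctly at the primes above $p$. The non-anomaly condition and the ordinarity of $A$ are precisely what force the relevant $H^0$-terms in the descent spectral sequence to vanish, while the hypotheses of Theorems \ref{hsiehstheorem} and \ref{maintheorem} supply both the equality of characteristic ideals and the $\Lambda(\Gamma_K)$-torsionness of $X_\Sigma^\chi$ that the argument rests on.
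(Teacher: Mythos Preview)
Your proposal is correct and follows essentially the same route as the paper: factor $L(A/L,s)$ into Hecke $L$-series, invoke the Iwasawa Main Conjecture supplied by Theorem~\ref{maintheorem} (via Hsieh), specialize to finite level by a control argument, and read off the non-vanishing of the relevant value of $\ccL_{p,\Sigma}^{\chi_\eta}$ from the Katz--Hida--Tilouine interpolation formula. The only packaging difference is that the paper handles the descent step through the elementary inclusions of Proposition~\ref{control_theorem} (namely $\Sel'_\frap(A/L)\hookrightarrow\Sel'_\frap(A/L_\infty)^{\Gamma_L}\subset H^1_{\ccF_\Sigma^*}(L,(T_{\chi,L}\otimes\Lambda(\Gamma_L))^\vee(1))^{\Gamma_L}$), whereas you phrase it via Nekov\'a\v r's spectral sequence; the non-anomaly condition plays the same role in both.
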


The only results in this direction were limited to the case when the abelian variety $A$ in question is CM by $K$ defined over the CM field $K$ (cf. $L=K$) or is of ${\rm GL}_2$-type and is defined over $\bbQ$; or else to the case ${\rm dim}(A) = 1$ (that is, $A$ is an elliptic curve).

\subsection*{Organization} We provide a brief overview of the key technical components involved in the proofs of Theorems \ref{maintheoremA} and \ref{maintheoremB}. In Section \ref{selmerstructuresandselmercomplexes}, we review  the Selmer structures introduced by Mazur and Rubin in \cite{MR04} as well as Nekov{\' a}{\v r}’s Selmer complexes \cite{Nek06}.  We then reformulate the Iwasawa main conjecture for the CM field $K$ and the branch character $\chi$ in terms of Selmer complexes, following \cite{bs22}. The section concludes with a summary of Nekov{\'a}{\v r}’s descent formalism from \cite[\S 11]{Nek06}, based on the exposition in \cite[\S 6.3]{BL20}. In Section \ref{heckecharactersattachedtoCMhilbertforms}, we establish several technical results to apply the descent formalism to the Hecke characters attached to CM Hilbert modular forms, leading to the proof of Theorem \ref{maintheoremA} in Section \ref{anticyclotomicmainconjecture}. Section \ref{heckecharofabelianvarieties} focuses on CM abelian varieties defined over an abelian extension of the CM field $K$. After stating the Iwasawa main conjecture for the Hecke character attached to the CM abelian variety in question,  following the approach in \cite[\S 4.4]{buy14} we prove Theorem \ref{maintheoremB}.

\subsection*{Outlook} As in \cite{mai08,ht94,hid09,AH06,AR07,hsi14,buy14}, the $p$-ordinary condition plays also a vital role in this article. One difficulty in the absence of this hypothesis is that neither the Iwasawa modules that one naturally considers are torsion nor the obvious candidates for $p$-adic $L$-function lie in the Iwasawa algebra. To overcome this difficulty, one needs to appeal to $\pm$-Iwasawa theory \cite{kob03,per93, per03, pol03,kato04, PR04}.

\vspace{1.2mm}

Let $E$ be an elliptic curve over $\bbQ$. In \cite{ah05}, Agboola and Howard proved an anticyclotomic Iwasawa main conjecture for $E$ using $\pm$-Selmer groups and elliptic units, assuming Rubin's fundamental conjecture \cite{rub92} (later proved in \cite{bko21}). B{\"u}y{\"u}kboduk \cite{buy15} extended this to newforms of even weight $k \geq 2$ with CM by $K$.

\vspace{1.2mm}

In \cite{buy18}, B{\"u}y{\"u}kboduk proved the cyclotomic main conjecture of $E$ defined over a totally real field, which has CM by an imaginary quadratic field, for a supersingular prime $p$ using the Rubin-Stark elements, conditional on their existence and the truth of reciprocity laws relating them to the appropriate $p$-adic $L$-functions. 

\vspace{1.2mm}

In a forthcoming work, we adapt the approach developed in \cite{BL15,buy18} for the conjectural Rubin–Stark elements along with the new constructions in \cite{bko21} of local points to prove the multi-signed anticyclotomic main conjectures of the Iwasawa theory for CM elliptic curves at \textit{supersingular} primes.

\subsection*{Acknowledgements} I would like to thank my advisor K\^{a}z{\i}m B\"{u}y\"{u}kboduk who suggested this project. I am also grateful for his continuous guidance and enlightening discussions during the preparation of this paper. I also would like to thank Antonio Lei, Christian Wuthrich, F{\i}rt{\i}na K{\"u}\c{c}{\"u}k and Daniele Casazza for their comments and suggestions on this article.

\section{Selmer structures and Nekov{\' a}{\v r}’s Selmer Complexes}\label{selmerstructuresandselmercomplexes}

For any non-archimedean prime $\lambda$ of $K$, fix a decomposition group $D_\lambda$ and the inertia subgroup $I_\lambda \subset D_\lambda$. Let $(-)^\vee := \Hom(-,\bbQ_p/\bbZ_p)$ denote Pontryagin duality functor. Observe that $(-)^\vee \otimes\frakO = \Hom(-,\frakF/\frakO)$. Considering this relation, we will write $X^\vee$ for $\Hom(X, \frakF/\frakO)$ when $X$ is an $\frakO$-module. 

\vspace{1.2mm}

 Suppose $\varphi:G_K\longrightarrow \frakO^\times$ is a continuous $p$-adic Hecke character which is Hodge-Tate. Write $\frakO^\times=\mu(\frakF^\times)\times U^{(1)}$ and $\langle \varphi \rangle:G_K \xtwoheadrightarrow{\;} U^{(1)} $ is the map $\varphi$ followed by the projection $\frakO^\times \xtwoheadrightarrow{\;}  U^{(1)}$. Set
\begin{equation*}
    \chi:= \omega_\varphi=\varphi\cdot\langle\varphi\rangle^{-1}: G_K \xtwoheadrightarrow{} \mu(\frakF^\times)\xhookrightarrow{} \frakO^\times
\end{equation*}
and assume that
\begin{equation}\label{(1.3)}
    \chi \text{ is not the trivial character.}
\end{equation}
   
Define the $G_K$-representation $T_\chi:=\frakO(1)\otimes\chi^{-1}$ that is equal to $\frakO$ as an $\frakO$-module and on which $G_K$ acts via $\chi_\cyc\chi^{-1}$, where $\chi_\cyc$ denotes the cyclotomic character of $K$. Let $\varphi^*=\varphi^{-1}\otimes\chi_\cyc$ and $T:=\frakO(\varphi^*)$ be the free $\frakO$-module of rank one on which $G_K$ acts via the Hecke character $\varphi^*$. Observe that $T=T_\chi\otimes\langle\varphi\rangle^{-1}$.

\subsection{Selmer Structures}

In this section, we will recall Mazur and Rubin's definition of \textit{Selmer structures} (cf. \cite[\S 2]{MR04}). Let $R$ be a complete local Noetherian $\frakO$-algebra, and let $X$ be an $R[[G_K]]$-module which is free of rank $d$ over $R$.  We will be interested in the case where $X$ is $T_\chi\otimes \Lambda(\Gamma_K)$ or its relevant quotients by an ideal of $\Lambda(\Gamma_K)$ such as the augmentation ideal. 

\vspace{1.2mm}

\begin{definition}--- A Selmer structure $\ccF$ on $X$ is a collection of the following data:
\begin{enumerate}[label=(\roman*)]
    \item A finite set $\Sigma(\ccF)$ of places of $K$, including all infinite places and primes lying above $p$ and all primes at which $X$ is ramified.
    \item For every $v\in \Sigma(\ccF)$, a local condition on $X$ viewed as an $R[[D_v]]$-module; namely, a choice of an $R$-submodule
    $$H_\ccF^1(K_v,X)\subset H^1(K_v,X). $$
\end{enumerate}
If $v\not\in \Sigma(\ccF)$ we will also write $H_\ccF^1(K_v,X)=H_f^1(K_v,X)$, where the module $H_f^1(K_v,X)$ is the finite part of $H^1(K_v,X)$ in the sense of \cite[Definition~1.1.6]{MR04}.
\end{definition}

\vspace{1.2mm}

For a Selmer structure $\ccF$ on $X$, set $\Sigma(\ccF^*)=\Sigma(\ccF)$, and for $v\in \Sigma(\ccF^*)$ define $H_{\ccF^*}^1(K_v,X^\vee(1)):=H_\ccF^1(K_v,X)^\perp$ as the orthogonal complement of $H_{\ccF}^1(K_v,X)$ with respect to the local Tate pairing. The Selmer structure $\ccF^*$ on $X^\vee(1)$ will be called the dual Selmer structure.

\vspace{1.2mm}

\begin{definition}\label{selmer_module} ---
If $\ccF$ is a Selmer structure on $X$, we define the Selmer module $H_\ccF^1(K,X)$ as the kernel of the sum of the localization maps
\begin{equation*}
    H_\ccF^1(K,X):={\rm ker}\bigg(H^1(G_{K,\Sigma(\ccF)},X)\longrightarrow \bigoplus_{v\in \Sigma(\ccF)}H^1(K_v,X)\big/H_\ccF^1(K_v,X) \bigg),
\end{equation*}
where $K_{\Sigma(\ccF)}$ denotes the maximal extension of $K$ which is unramified outside $\Sigma(\ccF)$, and $G_{K,\Sigma(\ccF)}:={\rm Gal}(K_{\Sigma(\ccF)}/K)$. We also define the dual Selmer module $H^1_{\ccF^*}(K,X^\vee(1))$ in a similar way.
\end{definition}

\vspace{1.2mm}

\begin{example}\label{selmerstructureexample} --- We have the following examples (see \cite[Definitions 3.2.1, 5.3.2]{MR04} and \cite[Example 2.12]{buy14}):
\begin{enumerate}[label=(\roman*)]
    \item Assume that $R=\frakO$ and that $X$ is a free $\frakO$-module endowed with a continuous action of $G_K$, which is unramified outside a finite set of places of $K$. We define a Selmer structure $\ccF_{\rm can}$ by setting 
    $$\Sigma(\ccF_{\rm can}):=\{v: X \; \text{\rm is ramified at }v \}\cup \{\frap\mid p\}\cup \{v\mid \infty\}.$$
    \begin{itemize}
        \item if $v \in \Sigma(\ccF_{\rm can})$ with $v\nmid p\infty$, we define
        $$H_{\ccF_{\rm can}}^1(K_v,X):={\rm ker}(H^1(K_v,X)\longrightarrow H^1(K_v^{\rm unr},X\otimes_\frakO K)), $$
where $K_v^{\rm unr}$ denotes the maximal unramified extension of $K_v$.
        \item If $\frap \mid  p$, we define the local condition as
        $$H_{\ccF_{\rm can}}^1(K_\frap,X):=H^1(K_\frap,X) .$$
         \item If $v\mid \infty$, we define the local condition as
        $$H_{\ccF_{\rm can}}^1(K_v,X):=H^1(K_v,X) .$$
    \end{itemize}
The Selmer structure $\ccF_{\rm can}$ is called the canonical Selmer structure on $X$.

\item Assume now that $R$ denotes $\Lambda(\Gamma_K)$ or its anticyclotomic quotient $\Lambda(\Gamma_{\rm ac})$ and $\bbX$ is a free $R$-module on which $G_K$ acts continuously, and $\bbX$ is unramified outside a finite set of places of $K$. We define a Selmer structure $\ccF_R$ on $\bbX$ by setting
$$\Sigma(\ccF_R):=\{v: \bbX \; \text{\rm is ramified at }v \}\cup \{\frap\mid p\}\cup \{v\mid \infty\},$$
and $H_{\ccF_R}^1(K_v,\bbX):=H^1(K_v,\bbX)$ for every $v\in \Sigma(\ccF_R)$. The Selmer structure $\ccF_R$ on $\bbX$ is called the canonical $R$-adic Selmer structure on $\bbX$.
\end{enumerate}
\end{example}

\subsubsection{Modified Selmer structures} Write $\bbT_\chi$ (resp. $\bbT$) for the $G_K$-representation $T_\chi\otimes\Lambda(\Gamma_K)$ (resp. $T\otimes\Lambda(\Gamma_K)$).  Note that the action of $G_K$ on $\bbT_\chi$ is given by its action of both factors and that $G_K$ acts on $\Lambda(\Gamma_K)$ via the tautological character $G_K \twoheadrightarrow \Gamma_K \hookrightarrow \Lambda(\Gamma_K)^\times$.  

\vspace{1.2mm}

\begin{definition}\label{local_cond_selm_st} ---
We define the $\ccF_\Sigma$-modified Selmer structure on $\bbT_\chi$ as follows: 
 \begin{itemize}
        \item $\Sigma_K=\Sigma(\ccF_{\Lambda(\Gamma_K)}):=\{v:\bbT_\chi \; \text{\rm is ramified at }v \}\cup \{\frap\mid p\}\cup \{v\mid \infty\},$
        \item if $v\nmid p$, define $H^1_{\ccF_\Sigma}(K_v,\bbT_\chi):=H^1_{\ccF_\can}(K_v,\bbT_\chi)$
        \item $H^1_{\ccF_\Sigma}(K_p,\bbT_\chi):=\bigoplus_{\fraq\in\Sigma^c}H^1(K_{\fraq},\bbT_\chi)  \subset H^1(K_p,\bbT_\chi):=\bigoplus_{\fraq\mid  p}^sH^1(K_{\fraq},\bbT_\chi).$
    \end{itemize}
\end{definition}

\begin{lemma}\label{twistlem} ---
We have the following twisting isomorphisms
\begin{equation*}
    H^1(M,\bbT_\chi)\otimes\langle\varphi \rangle^{-1}\xrightarrow[]{\;\;\sim\;\;} H^1(M,\bbT),
\end{equation*}
for $M=K,K_p$.
\end{lemma}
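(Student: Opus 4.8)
The plan is to prove Lemma \ref{twistlem} as an instance of the standard twisting formalism of Iwasawa theory (cf. \cite{MR04}, \cite{buy14}), the idea being to absorb the scalar twist $\langle\varphi\rangle^{-1}$ into the tautological $\Gamma_K$-action on the coefficient ring $\Lambda(\Gamma_K)$. The starting point is the identity $T = T_\chi\otimes\langle\varphi\rangle^{-1}$ recorded above. Tensoring it with $\Lambda(\Gamma_K)$ over $\frakO$ — and noting that an $\frakO$-linear scalar twist leaves the $G_K$-action on the coefficient factor $\Lambda(\Gamma_K)$ untouched — gives an isomorphism $\bbT \cong \bbT_\chi\otimes\langle\varphi\rangle^{-1}$ of $\Lambda(\Gamma_K)[[G_K]]$-modules. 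So everything reduces to producing an isomorphism $\bbT_\chi \cong \bbT$, semilinear over a suitable $\frakO$-algebra automorphism of $\Lambda(\Gamma_K)$, and then pushing it through $H^1(M,-)$.

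The key step is to check that $\langle\varphi\rangle$ factors through $\Gamma_K = \gal(K_\infty/K)$. First, $\langle\varphi\rangle$ is continuous and, by construction, valued in $U^{(1)} = 1+\fram_{\frakO}$, which is a pro-$p$ group; it is torsion-free as soon as $\mu_p\not\subset\frakF$, and in any case one may pass to its maximal torsion-free quotient. Hence $\langle\varphi\rangle$ factors through the maximal torsion-free abelian pro-$p$ quotient of $G_K$, and by the Leopoldt conjecture for $K$ — which holds, for instance, whenever $K/\bbQ$ is abelian, as recalled in the introduction — this quotient is precisely $\Gamma_K$. Writing $\langle\varphi\rangle = \psi_0\circ\mathfrak{t}_K$, with $\mathfrak{t}_K\colon G_K\twoheadrightarrow\Gamma_K$ the tautological projection and $\psi_0\colon\Gamma_K\to U^{(1)}$, I would then take $\tw\colon\Lambda(\Gamma_K)\to\Lambda(\Gamma_K)$ to be the continuous $\frakO$-algebra automorphism determined by $\gamma\mapsto\psi_0(\gamma)^{-1}\gamma$; this is well defined and invertible because $\psi_0(\gamma)\in 1+\fram_{\frakO}$ for every $\gamma\in\Gamma_K$.

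It then remains to verify that $\mathrm{id}_{T_\chi}\otimes\tw\colon\bbT_\chi\to\bbT_\chi$ is $\tw$-semilinear over $\Lambda(\Gamma_K)$ and carries the $G_K$-action on $\bbT_\chi$ to the one twisted by $\langle\varphi\rangle^{-1}$, i.e. to the $G_K$-action on $\bbT$; this is a direct computation, since $g\in G_K$ acts on the coefficient factor by multiplication by $\mathfrak{t}_K(g)$ and $\tw(\mathfrak{t}_K(g)) = \psi_0(\mathfrak{t}_K(g))^{-1}\mathfrak{t}_K(g) = \langle\varphi\rangle(g)^{-1}\mathfrak{t}_K(g)$, which is exactly the required extra twist. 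Applying $H^1(M,-)$ to this isomorphism of $G_K$-modules finishes the argument: directly for $M=K$, and for $M=K_p$ by restricting the $G_K$-equivariant map to each decomposition group $G_{K_\fraq}$ with $\fraq\mid p$ and summing. In both cases the resulting $\tw$-semilinear isomorphism on cohomology — whose source is $H^1(M,\bbT_\chi)$ with its $\Lambda(\Gamma_K)$-structure transported along $\tw$ — is exactly what the notation $H^1(M,\bbT_\chi)\otimes\langle\varphi\rangle^{-1}$ refers to. (One should also note that $T_\chi$, $T$ and their $\Lambda(\Gamma_K)$-enlargements are unramified outside one common finite set of places, so that all the cohomology groups in sight are the expected Galois cohomology groups; this bookkeeping is routine.) The one genuine obstacle is the claim in the second paragraph that $\langle\varphi\rangle$ descends all the way to $\Gamma_K$, rather than to some strictly larger abelian pro-$p$ quotient of $G_K$ — this is where the Leopoldt hypothesis enters — and everything else is formal.
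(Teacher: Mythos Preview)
Your argument is correct and follows the same route as the paper: the paper's proof is a one-line appeal to \cite[Chapter~VI, Proposition~2.1]{Ru00}, invoking $T=T_\chi\otimes\langle\varphi\rangle^{-1}$ together with the assertion that $\langle\varphi\rangle^{-1}$ is a continuous character of $\Gamma_K$, and your explicit twisting automorphism $\tw$ of $\Lambda(\Gamma_K)$ is precisely the mechanism underlying that proposition. The Leopoldt hypothesis you flag as the ``one genuine obstacle'' is exactly the point the paper takes for granted when it asserts that $\langle\varphi\rangle^{-1}$ factors through $\Gamma_K$; you have not introduced an extra assumption but rather made explicit one the paper leaves implicit.
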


\begin{proof}

The result is immediate from \cite[Chapter VI, Proposition 2.1]{Ru00}, as we have $T=T_\chi\otimes\langle\varphi\rangle^{-1}$ and $\langle \varphi\rangle^{-1}$ is a continuous character of $\Gamma_K$.
\end{proof}

\begin{definition} ---
Let $H_{\ccF_\Sigma}^1(K_p,\bbT)\subset H^1(K_p,\bbT)$ be the isomorphic image of $H_{\ccF_\Sigma}^1(K_p,\bbT_\chi)$ under the isomorphism given in Lemma~\ref{twistlem}.
\end{definition}

\begin{lemma}\label{isomorhismofmodifiedselmermodules} ---
We have the following isomorphism of the Selmer groups induced by the isomorphism in Lemma~\ref{twistlem}
\begin{equation*}
    H^1_{\ccF_\Sigma}(K,\bbT_\chi)\otimes\langle\varphi \rangle^{-1}\xrightarrow[]{\;\;\sim\; 
 \;} H^1_{\ccF_\Sigma}(K,\bbT).
\end{equation*}

\end{lemma}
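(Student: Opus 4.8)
The plan is to show that the global twisting isomorphism of Lemma~\ref{twistlem} is compatible, one place at a time, with the local conditions cutting out the two $\ccF_\Sigma$-modified Selmer modules, and hence restricts to an isomorphism of the kernels that define them. I would first note that, since $\bbT=\bbT_\chi\otimes\langle\varphi\rangle^{-1}$ with $\langle\varphi\rangle^{-1}$ a continuous character of $\Gamma_K$ and therefore unramified outside $p$, the representations $\bbT$ and $\bbT_\chi$ are ramified at exactly the same primes; thus $\Sigma(\ccF_\Sigma)$ is the same finite set $\Sigma_K$ for both, and both $H^1_{\ccF_\Sigma}(K,\bbT_\chi)$ and $H^1_{\ccF_\Sigma}(K,\bbT)$ are kernels of localization maps out of $H^1(G_{K,\Sigma_K},-)$. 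The twisting isomorphism of \cite[Chapter~VI, Proposition~2.1]{Ru00} is obtained by tensoring cocycles with a fixed generator of the rank one twisting module, so it is functorial for restriction to decomposition groups; in particular it commutes with the localizations $H^1(G_{K,\Sigma_K},-)\to H^1(K_v,-)$, and over the primes above $p$ it is precisely the isomorphism recorded in Lemma~\ref{twistlem}.

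It therefore remains to check, for each $v\in\Sigma_K$, that the twist carries $H^1_{\ccF_\Sigma}(K_v,\bbT_\chi)$ onto $H^1_{\ccF_\Sigma}(K_v,\bbT)$. For $v\mid\infty$ this is vacuous: $p$ is odd and the coefficients are pro-$p$, so $H^1(K_v,\bbT_\chi)=H^1(K_v,\bbT)=0$. For $v\mid p$ it holds by construction, since $H^1_{\ccF_\Sigma}(K_p,\bbT)$ is defined to be the image of $H^1_{\ccF_\Sigma}(K_p,\bbT_\chi)$ under the twist. The remaining case $v\nmid p\infty$ — where the local condition is the canonical, essentially unramified one of Example~\ref{selmerstructureexample}(i) — is the only one requiring an argument: there $\langle\varphi\rangle^{-1}$ is unramified at $v$, so $I_v$ acts trivially on the twisting factor and the twist commutes with restriction to $K_v^{\rm unr}$ (and, after $-\otimes_\frakO\frakF$, torsion is irrelevant), whence it identifies $\ker\big(H^1(K_v,\bbT_\chi)\to H^1(K_v^{\rm unr},\bbT_\chi\otimes_\frakO\frakF)\big)$ with the analogous kernel for $\bbT$, i.e. $H^1_{\ccF_\can}(K_v,\bbT_\chi)$ with $H^1_{\ccF_\can}(K_v,\bbT)$.

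Combining these, the twisting isomorphism identifies $H^1(G_{K,\Sigma_K},\bbT_\chi)\otimes\langle\varphi\rangle^{-1}$ with $H^1(G_{K,\Sigma_K},\bbT)$ compatibly with the maps into $\bigoplus_{v\in\Sigma_K}H^1(K_v,-)/H^1_{\ccF_\Sigma}(K_v,-)$, hence it induces an isomorphism on kernels, which is exactly the claimed isomorphism of Selmer modules. The main obstacle — a mild one — is the bookkeeping at the primes $v\nmid p\infty$, namely verifying that an unramified twist preserves the canonical local condition of Example~\ref{selmerstructureexample}(i); the contributions at $\infty$ and at $p$ are, respectively, a vanishing statement and a matter of definition.
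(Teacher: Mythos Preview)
Your argument is correct and is essentially the natural unpacking of the citation the paper gives: the paper's proof consists solely of the sentence ``This is \cite[Lemma~2.15]{buy14},'' and what you have written is precisely the standard verification that an unramified twist preserves the local conditions defining $\ccF_\Sigma$. There is nothing materially different between your route and the one the paper defers to.
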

\begin{proof}
   This is \cite[Lemma 2.15]{buy14}.
\end{proof}

\vspace{1.2mm}

 \subsection{Selmer complexes}

 For $\Gamma\in\{\Gamma_K,\Gamma_{\rm cyc}, \Gamma_{\rm ac} \}$ we write $\Lambda(\Gamma)^{\iota}$ for the free $\Lambda(\Gamma)$-module of rank one equipped with the action of $G_K$ given as the composition map
\begin{equation*}
    G_K  \twoheadrightarrow  \Gamma  \xrightarrow{\iota\;:\;g\;\mapsto\; g^{-1}} \Gamma \hookrightarrow \Lambda(\Gamma)^{\times}.
\end{equation*}

Let us define the following $G_K$-representations 
\begin{equation*}
    \mathbb T_\chi^\iota:= T_\chi\otimes \Lambda(\Gamma_K)^{\iota},\;\mathbb T_\chi^{\rm ac,\iota}:=T_\chi\otimes\Lambda\left(\Gamma_{\rm ac}\right)^{\iota} \text{ and } \mathbb T_\chi^{\rm cyc,\iota}:=T_\chi\otimes\Lambda(\Gamma_{\rm cyc})^{\iota}.
\end{equation*}

 For simplicity, let $S:=\Sigma(\ccF_\can)$. For $X\in\{\mathbb T_\chi^\iota,\mathbb T_\chi^{\ac,\iota}, \mathbb T_\chi^{\cyc,\iota} \}$ we set the Greenberg's local conditions:
\[ U^+_v(X):= \begin{cases} 
      C^\bullet(G_v,X),& \text{if } v\in\Sigma^c \\
      0,& \text{if } v\in \Sigma \\
      C^\bullet(G_v/I_v,X^{I_v}),& \text{if } v\not\in S_p(K).
       \end{cases}
\]

Here, for a pro-finite group $G$ and a topological $G$-module $X$, let $C^\bullet(G, X )$ denote the complex of continuous cochains. For each prime $v$ of $K$, we have a canonical injection
\begin{equation*}
    i_v^+:U_v^+(X)\longrightarrow C^{\bullet}(G_v,X).
\end{equation*}

Additionally,  we set 
\begin{equation*}
    U_v^-(X):={\rm Cone}\left(U_v^+(X)\xrightarrow{\;\;-i^+_v\;\;} C^\bullet(G_v,X)\right)
\end{equation*}
 and
 \begin{align*}
    U_{S}^{\pm}(X):=\bigoplus_{v\in S_f}U_v^{\pm}(X); \;\;    i_S^{+}(X)=\bigoplus_{v\in S_f}i_v^+(X).
 \end{align*}

 We also define
$${\rm res}_{S}: C^\bullet(G_{K,S},X)\longrightarrow \bigoplus_{v\in S_f} C^\bullet(G_v,X)$$
as the canonical restriction morphism.

\begin{definition}\label{selmer_complex} ---
The Selmer complex on $X$ associated with a choice of local conditions $\{U_v^+(X)\}_{v\in S}$ on $X$ is given by the complex
\begin{equation*}
    \widetilde{C}_{f}^\bullet(G_{K,S},X;\Delta_\Sigma) := {\rm Cone}\left(C^\bullet(G_{K,S},X)\oplus U_{S}^+(X) \xrightarrow{{\rm res}_{S}-i_S^+}   \bigoplus_{v\in S_f} C^\bullet(G_v,X)\right)[-1],
\end{equation*}
where $[n]$ denotes the shift by $n$.
\end{definition}

We will denote by $\widetilde{R\Gamma}_{ f}(G_{K,S},X;\Delta_\Sigma)$ the  object in the derived category corresponding to the complex $\widetilde{C}_{ f}^\bullet(G_{K,S},X;\Delta_\Sigma)$ and by $\widetilde{H}^i_{f}(G_{K,S},X;\Delta_\Sigma)$ its cohomology.

\begin{lemma}\label{isom_selmer_groups} ---
We have the following isomorphism of $\Lambda(\Gamma_K)$-modules:
\begin{equation*}
        \widetilde{H}^2_{f}(G_{K,S},\bbT_\chi;\Delta_\Sigma)\cong   \left( H_{\ccF_\Sigma^*}^1\left(K,\bbT_\chi^\vee(1)\right)^\iota\right)^{\vee}.
 \end{equation*}
\end{lemma}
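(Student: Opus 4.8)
The plan is to identify $\widetilde{H}^2_{f}(G_{K,S},\bbT_\chi;\Delta_\Sigma)$ with a Greenberg-style Selmer group by reading off the cohomology of the cone, and then to apply Poitou--Tate/Greenberg duality to flip it to the dual Selmer group of $\bbT_\chi^\vee(1)$. Concretely, I would first unwind the definition of the Selmer complex. Its cohomology sits in the long exact sequence
\begin{equation*}
\cdots \to \widetilde{H}^i_{f}(G_{K,S},\bbT_\chi;\Delta_\Sigma) \to H^i(G_{K,S},\bbT_\chi)\oplus \bigoplus_{v\in S_f} H^i(U_v^+) \xrightarrow{\,\res_S - i_S^+\,} \bigoplus_{v\in S_f} H^i(G_v,\bbT_\chi) \to \cdots
\end{equation*}
coming from the mapping cone (shifted by $-1$). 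Reading this in degree $2$, together with the fact that the local conditions $U_v^+(\bbT_\chi)$ are genuine subcomplexes (so $H^i(U_v^+) \hookrightarrow H^i(G_v,\bbT_\chi)$ is realized via $i_v^+$, and the cokernel is $H^i(U_v^-)$), shows that $\widetilde{H}^2_f$ is built from $H^2(G_{K,S},\bbT_\chi)$ together with the $H^1$-level defects of the local conditions. I would then match the local conditions $U_v^+$ against the modified Selmer structure $\ccF_\Sigma$ of Definition \ref{local_cond_selm_st}: for $v\in\Sigma^c$ the condition is the full local cohomology, for $v\in\Sigma$ it is zero, and for $v\nmid p$ it is the unramified (finite) condition $H^1_{\ccF_\can}$ — exactly the data defining $\ccF_\Sigma$ on $\bbT_\chi$, with the dual structure $\ccF_\Sigma^*$ on $\bbT_\chi^\vee(1)$ obtained by taking orthogonal complements under local Tate duality.

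The second half of the argument is the global duality input. Here I would invoke Nekov\'a\v{r}'s duality for Selmer complexes (\cite[\S 6, \S 8]{Nek06}): the Selmer complex $\widetilde{R\Gamma}_f(G_{K,S},\bbT_\chi;\Delta_\Sigma)$ is, up to a shift by $3$ and a twist, dual to the Selmer complex of $\bbT_\chi^\vee(1)$ with the orthogonally-complementary local conditions $\Delta_\Sigma^\perp$, i.e.
\begin{equation*}
\widetilde{R\Gamma}_f(G_{K,S},\bbT_\chi;\Delta_\Sigma) \cong \widetilde{R\Gamma}_f(G_{K,S},\bbT_\chi^\vee(1);\Delta_\Sigma^\perp)^{\vee}[-3]
\end{equation*}
as objects in the derived category of $\Lambda(\Gamma_K)$-modules (the $\iota$-twist accounts for the fact that Pontryagin duality inverts the $\Gamma_K$-action, which is why $\Lambda(\Gamma_K)^\iota$ appears in the statement). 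Taking $H^2$ of the left side and $H^1$ of the right side via this quasi-isomorphism, and using that $\bbT_\chi^\vee(1)$ is a cofree (hence cohomologically bounded in the right range) module so the lower cohomology of the dual complex vanishes, yields $\widetilde{H}^2_f(G_{K,S},\bbT_\chi;\Delta_\Sigma)\cong \big(\widetilde{H}^1_f(G_{K,S},\bbT_\chi^\vee(1);\Delta_\Sigma^\perp)\big)^\vee$ after incorporating the $\iota$. Finally, I would identify $\widetilde{H}^1_f(G_{K,S},\bbT_\chi^\vee(1);\Delta_\Sigma^\perp)$ with the classical dual Selmer module $H^1_{\ccF_\Sigma^*}(K,\bbT_\chi^\vee(1))$: in degree $1$ the Selmer complex cohomology agrees with the kernel of the localization map into the quotients by the local conditions (there is no $H^0$ obstruction since $\bbT_\chi^\vee(1)$ has no $G_{K,S}$-invariants when $\chi\neq\omega$, by \eqref{(1.2)}), which is exactly Definition \ref{selmer_module} for $\ccF_\Sigma^*$.

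I would therefore structure the proof as: (1) compute $\widetilde{H}^2_f(G_{K,S},\bbT_\chi;\Delta_\Sigma)$ and $\widetilde{H}^1_f(G_{K,S},\bbT_\chi^\vee(1);\Delta_\Sigma^\perp)$ as honest Selmer groups for $\ccF_\Sigma$, resp. $\ccF_\Sigma^*$, via the cone long exact sequences; (2) quote Nekov\'a\v{r}'s global duality quasi-isomorphism; (3) track the $\iota$-twist and the vanishing of extremal cohomology to pass from the derived-category statement to the stated isomorphism of $H^2$ with the Pontryagin dual of the twisted $H^1$.

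\emph{The main obstacle} I anticipate is purely bookkeeping rather than conceptual: getting the shift, the twist by $\iota$, and the orthogonality of the local conditions to line up correctly — in particular verifying that the local conditions $U_v^\pm$ of the Selmer complex are the exact Tate-orthogonal complements of those defining $\ccF_\Sigma^*$ at every place (the split at $p$ between $\Sigma$ and $\Sigma^c$ being the delicate point, since that is where the ``modified'' condition differs from the canonical one), and checking that $H^0$ and $H^3$ terms vanish so that the derived duality descends cleanly to the single cohomological degree claimed. All of these follow from \eqref{(1.2)}, the $p$-ordinary hypothesis \eqref{eqn:ordinaryassumption}, and the standard properties of Nekov\'a\v{r}'s formalism, but they must be assembled carefully.
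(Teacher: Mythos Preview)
Your approach is essentially the paper's: invoke Nekov\'a\v{r}'s global duality for Selmer complexes (the paper cites \cite[8.9.6.2]{Nek06}) to pass from $\widetilde{H}^2_f(G_{K,S},\bbT_\chi;\Delta_\Sigma)$ to the Pontryagin dual of $\widetilde{H}^1_f$ on $\bbT_\chi^\vee(1)$ (with the $\iota$-twist), and then identify the latter with the classical dual Selmer module $H^1_{\ccF_\Sigma^*}(K,\bbT_\chi^\vee(1))$. The paper streamlines your step (1) by recognising the target directly as Nekov\'a\v{r}'s strict Selmer group \cite[9.9.6.1]{Nek06} via a comparison of local conditions, and then citing \cite[Lemma~9.6.3]{Nek06} rather than unwinding the cone sequence by hand.

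One small correction to your bookkeeping: the obstruction to identifying $\widetilde{H}^1_f$ with the classical Selmer group is not the vanishing of global $G_{K,S}$-invariants but a \emph{local} $H^0$ contribution at the primes above $p$ (precisely the cokernels $H^0(G_v,\bbT_\chi^\vee(1))/H^0(U_v^+)$ that feed into $\widetilde{H}^1_f$ in the cone long exact sequence at those $v$ where $U_v^+$ is a proper subcomplex). The hypothesis that kills this is \eqref{eqn:(1.1)} ($\chi(\frap)\neq 1$ for $\frap\mid p$), not \eqref{(1.2)}; this is exactly what the paper invokes when applying \cite[Lemma~9.6.3]{Nek06}. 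Your condition $\chi\neq\omega$ does not by itself force the local invariants at $p$ to vanish.
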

\begin{proof}
  Let ${\rm Sel}^{\rm str}(K,\bbT_\chi^\vee(1))$ denote the strict Selmer group defined as in \cite[9.9.6.1]{Nek06}. Comparing the local conditions on ${\rm Sel}^{\rm str}(K,\bbT_\chi^\vee(1))$ and on $ H_{\ccF_\Sigma^*}^1\left(K,\bbT_\chi^\vee(1)\right)$ one can deduce that $H^1_{\ccF_\Sigma^*}(K,\bbT_\chi^\vee(1))= {\rm Sel}^{\rm str}(K,\bbT_\chi^\vee(1))$. On the other hand, it follows from \cite[8.9.6.2]{Nek06} that we have the following isomorphism
\begin{equation*}
      \widetilde{H}^2_{f}(G_{K,S},\bbT_\chi;\Delta_\Sigma)\xrightarrow{\;\;\sim\;\;} \left( \widetilde{H}_{f}^1\left(G_{K,S},\bbT_\chi^\vee(1);\Delta_\Sigma\right)^\iota\right)^{\vee} .
 \end{equation*}
 Since $\chi(\frap)\neq 1$ holds for $\chi$, the assertion follows from \cite[Lemma 9.6.3]{Nek06}.
\end{proof}

In \cite[Appendix C]{bs22}, B{\"u}y{\"u}kboduk and Sakamoto reformulated Conjecture \ref{IMCforCM} in terms of Selmer complexes.

 \begin{lemma}\label{reformulationofIMClemma} --- The $\Lambda_\ccW(\Gamma_K)$-module $\widetilde{H}^2_{f}(G_{K,S},\bbT_\chi^\iota;\Delta_\Sigma)$ is torsion, and we have
\begin{align*}
    \Char_{\Lambda_\ccW(\Gamma_K)}\left( \widetilde{H}^2_{f}(G_{K,S},\bbT_\chi^\iota;\Delta_\Sigma) \right) &= \Char_{\Lambda_\ccW(\Gamma_K)}\left( \left( H_{\ccF_\Sigma^*}^1\left(K,\bbT_\chi^\vee(1)\right)^\iota\right)^{\vee}\right) \\ &= \Char_{\Lambda_\ccW(\Gamma_K)}\left( X_\Sigma^\chi \right).
\end{align*}     
 \end{lemma}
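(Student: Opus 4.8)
The plan is to assemble three identifications of characteristic ideals, each of which is essentially already in hand. First, I would recall from Lemma~\ref{isom_selmer_groups} the isomorphism of $\Lambda(\Gamma_K)$-modules
\[
\widetilde{H}^2_{f}(G_{K,S},\bbT_\chi;\Delta_\Sigma)\cong\left(H_{\ccF_\Sigma^*}^1\!\left(K,\bbT_\chi^\vee(1)\right)^\iota\right)^{\vee}.
\]
To pass from $\bbT_\chi$ to $\bbT_\chi^\iota$ one observes that $\bbT_\chi^\iota = T_\chi\otimes\Lambda(\Gamma_K)^\iota$ is obtained from $\bbT_\chi$ by composing with the involution $\iota$ on $\Gamma_K$, which is a ring automorphism of $\Lambda(\Gamma_K)$; hence it carries Selmer complexes to Selmer complexes and preserves the property of being torsion as well as characteristic ideals (the involution fixes $\Char$ since it is an automorphism of the Iwasawa algebra). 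This gives the first displayed equality, after twisting the isomorphism of Lemma~\ref{isom_selmer_groups} by $\iota$ and using $(M^\iota)^\vee\cong (M^\vee)^\iota$.

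Next, for the second equality I would invoke the reciprocity/comparison between the Selmer group $H_{\ccF_\Sigma^*}^1(K,\bbT_\chi^\vee(1))$ and the Iwasawa module $X_\Sigma^\chi$. Concretely, $X_\Sigma$ is by definition the Galois group of the maximal abelian pro-$p$ extension of $L_\infty$ unramified outside $\Sigma$, and after applying the idempotent $e_\chi$ (and extending scalars to $\ccW$) Kummer theory together with the reflection/Iwasawa-theoretic description identifies $X_\Sigma^\chi$ with the Pontryagin dual of the $\chi$-part of a Selmer group for $\bbT_\chi^\vee(1)$ whose local condition at the primes in $\Sigma$ is the strict (zero) condition and which is unramified elsewhere — this is precisely $H^1_{\ccF_\Sigma^*}(K,\bbT_\chi^\vee(1))$ after matching dual local conditions: the modified condition at $p$ in Definition~\ref{local_cond_selm_st} selects $\Sigma^c$, so its orthogonal complement selects $\Sigma$, i.e. the strict condition along $\Sigma$. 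The cleanest route is to cite \cite[Appendix C]{bs22}, where exactly this reformulation is carried out; I would spell out the matching of local conditions (unramified-outside-$S$ globally, strict at $\Sigma$, unrestricted at $\Sigma^c$ and at the finite primes away from $p$ where ramification forces the unramified condition) so that the reader sees $\left(H_{\ccF_\Sigma^*}^1(K,\bbT_\chi^\vee(1))^\iota\right)^\vee$ and $X_\Sigma^\chi$ have equal characteristic ideals over $\Lambda_\ccW(\Gamma_K)$. The torsionness of $\widetilde{H}^2_f$ then follows from the already-cited fact (after \cite[Theorem 1.2.2]{ht94}) that $X_\Sigma^\chi$ is a finitely generated torsion $\Lambda(\Gamma_K)$-module, transported through the two isomorphisms.

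The main obstacle I anticipate is the bookkeeping in the second step: matching Nekov\'a\v r's strict Selmer group (with its $\Delta_\Sigma$-local conditions at $p$) against the classical Iwasawa module $X_\Sigma^\chi$ cut out by $\chi$, in particular keeping track of (i) the twist by $\iota$ and the identity $\bbT_\chi^\vee(1)\cong \bbT_\chi^\iota$-type relations, (ii) the effect of the hypothesis \eqref{eqn:(1.1)} that $\chi(\frap)\neq 1$ — which is what makes the local $H^0$ and $H^2$ terms at primes above $p$ vanish, so that the modified condition genuinely computes the expected dual Selmer group and no error terms appear (this is exactly where \cite[Lemma 9.6.3]{Nek06} and \cite[Lemma 9.6.3]{Nek06}-type vanishing enter), and (iii) the scalar extension to $\ccW$ and the idempotent $e_\chi$, which are harmless for characteristic ideals but must be tracked. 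Once these are in place, the equalities of characteristic ideals are formal, and the assertion of the lemma follows by transitivity.
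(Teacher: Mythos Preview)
Your proposal is correct and follows essentially the same route as the paper: the paper's proof is the one-line ``follows from Lemma~\ref{isom_selmer_groups} and \cite[Lemma C.7]{bs22}'', and you have unpacked exactly those two ingredients (the Nekov\'a\v r duality isomorphism for the first equality, and the comparison with $X_\Sigma^\chi$ from \cite[Appendix~C]{bs22} for the second, with torsionness coming from \cite[Theorem~1.2.2]{ht94}). One small imprecision: the involution $\iota$ does not in general \emph{fix} characteristic ideals but rather carries $\Char(M)$ to $\iota(\Char(M))$; what you actually need is that applying $\iota$ to both sides of Lemma~\ref{isom_selmer_groups} simultaneously preserves the equality, which is immediate since $\iota$ is a ring automorphism.
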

 \begin{proof}
     The assertion follows from Lemma \ref{isom_selmer_groups} and \cite[Lemma C.7]{bs22}.
 \end{proof}

\subsection{Nekov{\' a}{\v r}’s descent formalism} Following the summary in \cite[\S6.3]{BL20}, we will recall the descent formalism developed in \cite[\S11]{Nek06}. Consider any complete local Noetherian ring $R$ and a free $R$-module $X$ of finite rank endowed with a continuous action of $G_{K,S}$.

\vspace{1.2mm}

\begin{definition}\label{descend_defn} ---
\begin{enumerate}[label=(\roman*)]
    \item Let us set $R_{\rm cyc}:=R \widehat{\otimes}\Lambda(\Gamma_{\rm cyc})$ and $X_{\rm cyc}:=X \widehat{\otimes}\Lambda(\Gamma_{\rm cyc})^{\iota}$. A choice of Greenberg local conditions $\Delta(X)$ as in \cite[\S6.1]{Nek06} gives rise to local conditions for $\Delta(X_{\rm cyc})$ for $X_{\rm cyc}$ and allows one to consider the Selmer complex
$$\widetilde{\bfR\Gamma}_{f}(G_{K,S},X_{\rm cyc};\Delta(X_{\rm cyc})) \in {{\rm D_{ft}}({}_{R_{\cyc}}{\rm Mod})}. $$

\item Let ${\rm ht}_1(R)$ denote the set of height-$1$ primes of $R$. For any $P\in {\rm ht}_1(R)$ of $R$, we write $P_{\rm cyc}\subset R_{\rm cyc}$ for the prime ideal of $R_{\rm cyc}$ generated by $P$ and $\gamma_{\rm cyc}-1$, where $\gamma_{\rm cyc}$ is a fixed topological generator of $\Gamma_{\rm cyc}$.

\item Suppose that $M$ is a torsion $R_{\rm cyc}$-module of finite-type. Following  \cite[\S11.6.6]{Nek06}, we define
$$a_{P}(M_{P_{\rm cyc}}):={\rm length}_{R_P}\bigg(\varinjlim_r\dfrac{(\gamma_{\rm cyc}-1)^{r-1}M_{P_{\rm cyc}}}{(\gamma_{\rm cyc}-1)^{r}M_{P_{\rm cyc}}} \bigg). $$

\item For each $P\in {\rm ht}_1(R)$, let ${\rm Tam}_v(X,P)$ denote the Tamagava factor at a prime $v\in S_f$ prime to $p$ (see \cite[Definition 7.6.10]{Nek06}).

\item Set the linear dual $X^*:={\rm Hom}(X,R)(1)$, which is a free $R$-module of finite rank endowed with a continuous action of $G_{K,S}$ in the obvious manner. Let $\Delta(X^*(1))$ denote the dual local condition for $X^*(1)$ (in the sense that \cite[\S10.3.1]{Nek06} applies). We then define an $R$-adic (cyclotomic) height pairing as
$$\frah_{X}^{\rm Nek}: \widetilde{H}_{f}^1(G_{K,S},X;\Delta(X))\otimes \widetilde{H}_{f}^1(G_{K,S},X^*(1);\Delta(X^*(1))) \longrightarrow R$$
given as \cite[\S11.1.4]{Nek06} with $\Gamma=\Gamma_{\rm cyc}$.

\item If $R$ is a Krull domain, we define the $R$-adic regulator ${\rm Reg}(X)$ on the setting
\begin{equation*}
    {\rm Reg}(X):={\rm char}_R \bigg({\rm coker}\big(\widetilde{H}_{f}^1(G_{K,S},X;\Delta(X))\xrightarrow{{\;\rm adj}(\frah_X^{\rm Nek})\;}\widetilde{H}_{f}^1\left(G_{K,S},X^*(1);\Delta\left(X^*(1)\right)\right) \big)  \bigg)
\end{equation*}
where ${\rm adj}$ denotes the adjunction. Note that ${\rm Reg}(X)$ is non-zero if and only if $\frah_X^{\rm Nek}$ is non-degenarete.
\end{enumerate}
\end{definition}

\vspace{1.2mm}

\begin{definition}---
\begin{enumerate}[label=(\roman*)]
    \item Let $R$ be a Krull domain so that $R_P$ is a discrete valuation ring for every $P\in {\rm ht}_1(R)$ and $(R_{\rm cyc})_{P_{\rm cyc}}$ is a regular ring. Let $X$ be a finitely generated torsion $R_{\rm cyc}$-module, and let $r(X_{P_{\rm cyc}})$ denote the largest integer such that 
    \begin{equation*}
        {\rm char}_{(R_{\rm cyc})_{P_{\rm cyc}}}(X_{P_{\rm cyc}})\in (\gamma_{\rm cyc}-1)^{r(X_{P_{\rm cyc}})}(R_{P_{\rm cyc}}).
    \end{equation*}
We set 
\begin{equation*}
    \partial_{\rm cyc}{\rm char}_{(R_{P_{\rm cyc}})}(X_{P_{\rm cyc}}):=\bigg( \mathbbm{1}_{\rm cyc}(\gamma_{\rm cyc}-1)^{-r(X_{P_{\rm cyc}})}(M_{P_{\rm cyc}})\bigg)\subset R_P,
\end{equation*}
where $\mathbbm{1}_{\rm cyc}:R_{\rm cyc}\longrightarrow R$ is the augmentation map. 
\item Suppose further that every prime ideals in ${\rm ht}_1(R)$ is principal; this is equivalent to saying that $R$ is a unique factorization domain. In this case, one may define an integer $r(M)$ with
$${\rm char}_{R_{\rm cyc}}(M)\in (\gamma_{\rm cyc}-1)^{r(M)}R_{\rm cyc} $$
and set 
\begin{equation*}
    \partial_{\rm cyc}{\rm char}_{(R_{\rm cyc})}(M):=\mathbbm{1}_{\rm cyc}\Big((\gamma_{\rm cyc}-1)^{-r(M)}{\rm char}_{R_{\rm cyc}}(M) \Big) \in R.
\end{equation*}
In particular, we have 
\begin{equation}\label{apequalchar}
      {\rm char}_{(R_{\rm cyc})_{P_{\rm cyc}}}(M_{P_{\rm cyc}}) =\left( {\rm char}_{(R_{\rm cyc})}(M)\right)_{P_{\rm cyc}},
\end{equation}
\begin{equation}\label{apequalchar2}
    \partial_{\rm cyc}{\rm char}_{(R_{\rm cyc})_{P_{\rm cyc}}}(M_{P_{\rm cyc}})=\big( \partial_{\rm cyc}{\rm char}_{R_{\rm cyc}}(M)\big)_P.
\end{equation}
\end{enumerate}
\end{definition}

\vspace{1.2mm}

\begin{lemma}[{\rm Nekov\'a\v r}]\label{nekovardescentlemma} --- Suppose that $R$ is a Krull domain. Then we have
$$a_P(M_{\rm cyc})={\rm ord}_P\big(\partial_{\rm cyc} {\rm char}_{(R_{\rm cyc})_{P_{\rm cyc}}}(M_{P_{\rm cyc}}) \big). $$
Furthermore, if $R$ is a unique factorization domain, then 
$$a_P(M_{\rm cyc})=\big(\partial_{\rm cyc}{\rm char}_{R_{\rm cyc}}(M)\big)_P.$$
\end{lemma}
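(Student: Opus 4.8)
The plan is to reduce the statement to a computation over a two–dimensional regular local ring and then to the case of a cyclic module. First I would localise everything at the height–one prime $P$. All three quantities in the statement depend only on the localisation $M_{P_\cyc}$ as a module over $\Lambda:=(R_\cyc)_{P_\cyc}$: the invariant $a_P$ is defined through $M_{P_\cyc}$ in Definition~\ref{descend_defn}(iii), the order ${\rm ord}_P$ of $\partial_\cyc{\rm char}$ is by construction insensitive to localisation at $P$, and when $R$ is a UFD the passage between the local and global $\partial_\cyc{\rm char}$ is exactly \eqref{apequalchar} and \eqref{apequalchar2}. Since $R$ is a Krull domain, $R_P$ is a discrete valuation ring, and writing $T:=\gamma_\cyc-1$ we have $\Lambda\cong R_P[[T]]$, a two–dimensional regular (hence factorial) local ring. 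So I would henceforth assume $R=R_P$ is a DVR and work with the finitely generated torsion $\Lambda$–module $N:=M_{P_\cyc}$.

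Next I would reduce to cyclic modules. Over $\Lambda$ the module $N$ is pseudo-isomorphic to a finite direct sum $\bigoplus_i\Lambda/(f_i)$ with ${\rm char}_\Lambda(N)=(\prod_i f_i)$. The key points to check are that $a_P$ vanishes on pseudo-null $\Lambda$–modules and is additive in short exact sequences with pseudo-null kernel and cokernel; granting these, one replaces $N$ by the elementary module and handles a single summand at a time. The vanishing on a pseudo-null $N_0$ holds because $N_0$ has finite length over $\Lambda$, so by Nakayama the chain $T^rN_0$ strictly decreases to $0$, whence $T^{r-1}N_0/T^rN_0=0$ for $r\gg 0$ and the direct limit of Definition~\ref{descend_defn}(iii) is trivial. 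The additivity is a snake-lemma diagram chase for multiplication by $T$, together with the fact that the transition maps in the direct limit stabilise.

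It then remains to treat $N=\Lambda/(f)$. Write $f=T^{s}g$ with $g$ coprime to $T$ in $\Lambda$; by definition $s=r(N_{P_\cyc})$ is the $T$–adic valuation of ${\rm char}_\Lambda(N)$, so $\partial_\cyc{\rm char}_\Lambda(N)=\mathbbm{1}_\cyc(g)=\bar g\in R_P$. Using regularity of $\Lambda$ and $\gcd(T^r,T^sg)=T^{\min(r,s)}$, for $r\ge s$ one gets $T^rN\cong T^r\Lambda/(T^r\Lambda\cap T^sg\Lambda)=T^r\Lambda/T^rg\Lambda\cong\Lambda/(g)$, and the transition map $T\colon T^{r-1}N\to T^rN$ becomes multiplication by $T$ on $\Lambda/(g)$, injective with cokernel $\Lambda/(g,T)\cong R_P/(\bar g)$. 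Hence for $r\ge s+1$ the term $T^{r-1}N/T^rN$ is canonically $R_P/(\bar g)$, the direct limit stabilises there, and
\[ a_P(N)=\length_{R_P}\big(R_P/(\bar g)\big)={\rm ord}_P(\bar g)={\rm ord}_P\big(\partial_\cyc{\rm char}_\Lambda(N)\big). \]
Summing over the elementary summands gives the first formula; when $R$ is moreover a UFD, the integers $r(M)$ and the characteristic ideals are defined globally over $R_\cyc$, and \eqref{apequalchar}–\eqref{apequalchar2} identify the local and global $\partial_\cyc{\rm char}$, so applying the first formula at every $P\in\height_1(R)$ yields the global identity. The main obstacle I expect is the bookkeeping in the second step: verifying that $a_P$ is genuinely additive (not merely sub-additive) along a pseudo-isomorphism and that the relevant direct limits are eventually constant — this is precisely where regularity of $\Lambda$ and the coprimality of $g$ with $T$ enter, and it is the substance of \cite[\S11.6]{Nek06}, which one may also simply invoke verbatim.
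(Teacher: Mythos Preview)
Your argument is correct and is considerably more detailed than the paper's own proof, which simply cites \cite[Lemma~11.6.8]{Nek06} for the first assertion and then invokes \eqref{apequalchar2} for the second. What you have written is essentially an unpacking of Nekov\'a\v r's argument: the localisation to the two-dimensional regular local ring $R_P[[T]]$, the reduction to elementary modules via pseudo-isomorphism, and the explicit computation for $\Lambda/(T^s g)$ are precisely the ingredients behind \cite[\S11.6]{Nek06}. Your cyclic-module calculation is clean and correct, and for the UFD statement your use of \eqref{apequalchar}--\eqref{apequalchar2} matches the paper exactly.

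The only place requiring genuine care is the one you already flag: the additivity of $a_P$ along a pseudo-isomorphism is not a purely formal snake-lemma consequence, since the filtration $T^rN$ does not behave exactly in short exact sequences (Tor terms can appear). This is indeed the substance of \cite[\S11.6]{Nek06}, and your remark that one may invoke that section verbatim is the honest and appropriate way to close the gap.
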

\begin{proof}
    The first part is \cite[Lemma 11.6.8]{Nek06}. The second part follows from this combined with \eqref{apequalchar2}.
\end{proof}

\vspace{1.2mm}

\begin{prop}[{\rm Nekov\'a\v r}]\label{abstract_BSD_formulaeprop} --- With the definition given in \ref{descend_defn}, assume that the ring $R$ is a Krull integral domain and $\widetilde{H}_{f}^1(G_{K,S},X_{\rm cyc};\Delta(X_{\rm cyc}))$ and $\widetilde{H}_{f}^2(G_{K,S},X_{\rm cyc};\Delta(X_{\rm cyc}))$ are torsion $R_{\rm cyc }$-modules. Suppose further that
\begin{align*}
    \widetilde{H}_{f}^0(G_{K,S},X;\Delta(X))&=\widetilde{H}_{f}^3(G_{K,S},X;\Delta(X))=0\\
    \widetilde{H}_{f}^0(G_{K,S},X_{\rm cyc};\Delta(X_{\rm cyc}))&=\widetilde{H}_{f}^3(G_{K,S},X_{\rm cyc};\Delta(X_{\rm cyc}))=0
\end{align*}
and that the $R$-adic height pairing $\frah_X^{\rm Nek}$ is non-degenerate (namely, ${\rm Reg}(X)$ is non-zero). Let $P\in {\rm ht}_1(R)$ be any height-one prime ideal such that ${\rm Tam}_v(X,P)=0$ for every $v\in S_f$ prime to $p$. Then
\begin{align*}
    {\rm ord}_P\bigg(\partial_{\rm cyc}{\rm char}_{(R_{\rm cyc})_{P_{\rm cyc}}}\big(\widetilde{H}_{f}^2(G_{K,S},X_{\rm cyc}&;\Delta(X_{\rm cyc}) \big)_{P_{\rm cyc}}\bigg)={\rm ord}_P({\rm Reg}(X))\\
    &+{\rm length}_{R_{P}}\bigg(\big(\widetilde{H}_{f}^2(G_{K,S},X;\Delta(X))_P \big)_{R_P-{\rm tor}} \bigg).
\end{align*}
If in addition, $R$ is a Krull unique factorization domain, then we have
\begin{align*}
    {\rm ord}_P\bigg(\partial_{\rm cyc}{\rm char}_{(R_{\rm cyc})_{P_{\rm cyc}}}\big(\widetilde{H}_{f}^2(G_{K,S},X_{\rm cyc}&;\Delta(X_{\rm cyc}) \big)_{P_{\rm cyc}}\bigg)={\rm ord}_P({\rm Reg}(X))\\
    &+{\rm ord}_{R_{P}}\bigg(\big(\widetilde{H}_{f}^2(G_{K,S},X;\Delta(X))_P \big)_{R_P-{\rm tor}} \bigg).
\end{align*}
\end{prop}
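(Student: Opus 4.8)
The plan is to deduce this Bloch--Kato/Birch--Swinnerton-Dyer-type formula from Nekov\'a\v r's general descent machinery in \cite[\S11.7]{Nek06}, exactly in the shape exposited in \cite[\S6.3]{BL20}. First I would recall Nekov\'a\v r's exact triangle relating the cyclotomic Selmer complex to the ``bottom-layer'' Selmer complex: base-changing $\widetilde{\bfR\Gamma}_f(G_{K,S},X_{\rm cyc};\Delta(X_{\rm cyc}))$ along the augmentation $\mathbbm{1}_{\rm cyc}\colon R_{\rm cyc}\to R$ gives back $\widetilde{\bfR\Gamma}_f(G_{K,S},X;\Delta(X))$, and the derived tensor product with the Koszul complex $[R_{\rm cyc}\xrightarrow{\gamma_{\rm cyc}-1}R_{\rm cyc}]$ produces a long exact cohomology sequence. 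Under the vanishing hypotheses $\widetilde H^0_f=\widetilde H^3_f=0$ (both over $R$ and over $R_{\rm cyc}$), this long exact sequence degenerates, and the connecting maps in degrees $1$ and $2$ are governed precisely by the cyclotomic height pairing $\frah_X^{\rm Nek}$ (this is the content of \cite[\S11.1.4, \S11.7]{Nek06}).

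The key computation is then purely homological--algebraic and local at a fixed $P\in{\rm ht}_1(R)$. I would localize everything at $P$ (so $R_P$ is a DVR, $R_P$-length is well-defined) and at $P_{\rm cyc}$, where by hypothesis $(R_{\rm cyc})_{P_{\rm cyc}}$ is regular. Writing $M:=\widetilde H^2_f(G_{K,S},X_{\rm cyc};\Delta(X_{\rm cyc}))$, by definition $a_P(M_{P_{\rm cyc}})$ measures the ``$(\gamma_{\rm cyc}-1)$-adic part'' of ${\rm char}(M_{P_{\rm cyc}})$, and by Lemma~\ref{nekovardescentlemma} this equals ${\rm ord}_P(\partial_{\rm cyc}{\rm char}_{(R_{\rm cyc})_{P_{\rm cyc}}}(M_{P_{\rm cyc}}))$. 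So the left-hand side of the claimed identity is $a_P(M_{P_{\rm cyc}})$. On the other hand, chasing the degenerate long exact sequence from the first paragraph and taking $R_P$-lengths of the relevant finite-length modules, one gets
\begin{equation*}
    a_P(M_{P_{\rm cyc}}) = {\rm length}_{R_P}\big({\rm coker}(\frah^{\rm Nek}_{X,P})\big) + {\rm length}_{R_P}\big((\widetilde H^2_f(G_{K,S},X;\Delta(X))_P)_{R_P\text{-}{\rm tor}}\big),
\end{equation*}
where the first term is ${\rm ord}_P({\rm Reg}(X))$ by Definition~\ref{descend_defn}(vi) together with \cite[Lemma 11.6.8]{Nek06} and the hypothesis that the Tamagawa factors ${\rm Tam}_v(X,P)=0$ vanish (the Tamagawa factors appear as correction terms in the comparison of the algebraically-defined regulator with the analytic one; when they vanish the comparison is clean). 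This is exactly \cite[Theorem 11.7.6]{Nek06} applied to the data $(R,X,\Delta(X))$ with $\Gamma=\Gamma_{\rm cyc}$.

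The second assertion is a formal upgrade: when $R$ is additionally a UFD, every $P\in{\rm ht}_1(R)$ is principal, so by \eqref{apequalchar2} and the second half of Lemma~\ref{nekovardescentlemma} one may replace $\partial_{\rm cyc}{\rm char}_{(R_{\rm cyc})_{P_{\rm cyc}}}(M_{P_{\rm cyc}})$ by the localization at $P$ of the globally-defined element $\partial_{\rm cyc}{\rm char}_{R_{\rm cyc}}(M)\in R$, and ${\rm length}_{R_P}$ of a torsion $R_P$-module coincides with ${\rm ord}_{R_P}$ of its characteristic ideal. I expect the main obstacle to be bookkeeping rather than conceptual: one must verify that the hypotheses of \cite[\S11.7]{Nek06} are met in this generality — in particular that $X_{\rm cyc}$ is an admissible coefficient module, that $\widetilde{\bfR\Gamma}_f(G_{K,S},X_{\rm cyc};\Delta(X_{\rm cyc}))$ lies in ${\rm D}_{\rm ft}({}_{R_{\rm cyc}}{\rm Mod})$ and has torsion $\widetilde H^1_f$ and $\widetilde H^2_f$ (this is assumed), and that the base-change/control isomorphism identifying the derived specialization at $\gamma_{\rm cyc}-1$ with the bottom-layer complex holds without spurious error terms — so that Nekov\'a\v r's theorem applies verbatim. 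Once that is in place the proof is a direct citation plus the length-vs-order dictionary.
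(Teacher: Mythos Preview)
Your approach is essentially the same as the paper's: both reduce the proposition to a direct citation of Nekov\'a\v r's general descent formula in \cite[\S11.7]{Nek06} combined with Lemma~\ref{nekovardescentlemma}, and both deduce the UFD upgrade from \eqref{apequalchar} and \eqref{apequalchar2}. The paper's proof is a two-line citation of \cite[11.7.11]{Nek06}, whereas you unpack the mechanism (exact triangle, Koszul complex, height pairing as connecting map) and cite \cite[Theorem~11.7.6]{Nek06} instead; you should double-check the precise numbering, since 11.7.6 and 11.7.11 in Nekov\'a\v r's book play related but distinct roles, but the substance of the argument is the same.
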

\begin{proof}
    The first assertion is \cite[11.7.11]{Nek06}, combined with Lemma~\ref{nekovardescentlemma}. The second statement follows from the first, using \eqref{apequalchar} and \eqref{apequalchar2}.
\end{proof}

\section{Hecke characters attached to CM Hilbert modular forms}\label{heckecharactersattachedtoCMhilbertforms}

In this section, we explore a consequence of Theorem~\ref{hsiehstheorem} towards the anticyclotomic Iwasawa main conjecture for CM Hilbert modular forms (as a generalization of the main results in \cite{AH06, AR07}).

\vspace{1.2mm}

  Let $f$ be a normalized Hilbert newform of parallel (even) weight $(k,\dots,k)$, level $\fran\subset \ccO_F$ with CM by $K$ and fix a rational prime $p\geq 5$ at which $f$ is ordinary. Assume that $p$ does not ramify in $F/\bbQ$ and $K_f/\bbQ$, where $K_f$ denotes the Hecke field generated by the Hecke eigenvalues $c_f(\fraa)$ of $f$ for all $\fraa\subset \ccO_F$. Fix a prime ideal $\frakP$ of $\ccO_{K_f}$ lying above $p$, and let 
\begin{equation*}
    \rho_f: G_F \longrightarrow {\rm GL}_2(K_{f,\frakP})
\end{equation*}
denote the Galois representation associated to $f$. Since $f$ is $p$-ordinary, for all primes $\wp$ of $F$ above $p$, we have $\wp\ccO_K=\frap\frap^*$ with $\frap\neq \frap^*$. 

\subsection{$p$-adic Hecke characters}\label{padicheckechar}
 
 By the theory of complex multiplication, there is a Hecke character $\varphi$ of $K$ associated to $f$ whose infinity type is $(k-1)\sum_{\sigma\in \Sigma}\sigma$ for some CM type $\Sigma$ of $K$ (see \cite{hida79}). This implies that $\rho_f|_{G_K}\cong \varphi_\frap \oplus \varphi_\frap^c $ for the $\frap$-adic avatar $\varphi_\frap$ of $\varphi$, where $\frap$ is a prime of $K$ lying above $\wp$ and $c$ denotes the involution of $G_K$ induced by the complex conjugation.

 \vspace{1.2mm}

Assume also that $\overline{\varphi}\circ c=\varphi$ (i.e. $\varphi^c_\frap=\varphi_{\frap^*}$) so that the sign $W(\varphi)\in \{\pm 1\}$ of the functional equation
of $\varphi$ (namely, the sign of the functional equation for the Hecke $L$-series attached to $f$) makes sense. Then we have $\fraf_\varphi=\overline{\fraf}_\varphi$, where $\fraf_\varphi$ denotes the conductor of $\varphi$.

 \vspace{1.2mm}
 
 Denote by $\frakO$ the integer ring of $\frakF:=K_{f,\frakP}$. Consider the self-dual twist $\psi:=\varphi_\frap\otimes \chi_\cyc^{1-k/2}$ of $\varphi_\frap$.  Write $\frakO^\times=\mu(\frakF^\times)\times U^{(1)}$ and $\langle \varphi \rangle:G_K\twoheadrightarrow U^{(1)} $ is the map $\varphi$ followed by the projection $\frakO^\times \twoheadrightarrow U^{(1)}$. Set
\begin{equation*}
    \chi:= \omega_\varphi=\varphi\cdot\langle\varphi\rangle^{-1}: G_K \longrightarrow \mu(\frakF^\times)\hookrightarrow \frakO^\times
\end{equation*}
and assume that $\omega_\psi$ is not trivial. Define the $G_K$-representation $T_\chi:=\frakO(1)\otimes \chi^{-1}$ as in \S \ref{padicheckechar}.

\subsection{Application of Nekov\'a\v r's descent formalism}
 We will need several technical results to apply Nekov\'a\v r's formalism.

\begin{lemma}\label{euler_characteristic} ---
For $(X,\Gamma)=(\bbT_\chi^\iota,\Gamma_K)$ or $(\bbT_\chi^{\ac,\iota},\Gamma_\ac)$, let $\widetilde{h}^i$ denote the rank of the $\Lambda(\Gamma)$-modules $ \widetilde{H}^2_{f}(G_{K,S},X;\Delta_\Sigma)$. Then we have
\begin{equation*}
    \widetilde{h}^0-\widetilde{h}^1+\widetilde{h}^2-\widetilde{h}^3=0.
\end{equation*}
\end{lemma}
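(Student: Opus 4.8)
The plan is to compute the Euler--Poincar\'e characteristic of the Selmer complex $\widetilde{R\Gamma}_f(G_{K,S},X;\Delta_\Sigma)$ from its presentation as a cone, and to observe that the $p$-ordinary hypothesis \eqref{eqn:ordinaryassumption} together with the fact that $K$ is totally imaginary forces it to vanish. Throughout put $R:=\Lambda(\Gamma)$. By Nekov\'a\v r's formalism (\cite[\S\S 6--7]{Nek06}, cf.\ \cite[\S 6.3]{BL20}), $\widetilde{R\Gamma}_f(G_{K,S},X;\Delta_\Sigma)$ is a perfect complex of $R$-modules whose cohomology is concentrated in degrees $0\le i\le 3$; writing $\chi(\mathcal{C})$ for the Euler characteristic of such a complex (the alternating sum of $R$-ranks of its cohomology -- this $\chi$ has nothing to do with the character $\chi$), one thus has $\widetilde h^0-\widetilde h^1+\widetilde h^2-\widetilde h^3=\chi\big(\widetilde{R\Gamma}_f(G_{K,S},X;\Delta_\Sigma)\big)$, and it suffices to show the right-hand side is $0$.

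First I would unwind Definition \ref{selmer_complex}: the Selmer complex is ${\rm Cone}\big(C^\bullet(G_{K,S},X)\oplus U_S^+(X)\to\bigoplus_{v\in S_f}C^\bullet(G_v,X)\big)[-1]$, so by additivity of $\chi$ in distinguished triangles (together with $\chi(\mathcal{C}[-1])=-\chi(\mathcal{C})$ and $\chi(U_S^+(X))=\sum_{v\in S_f}\chi(U_v^+(X))$),
\begin{equation*}
\chi\big(\widetilde{R\Gamma}_f(G_{K,S},X;\Delta_\Sigma)\big)=\chi\big(C^\bullet(G_{K,S},X)\big)+\sum_{v\in S_f}\chi\big(U_v^+(X)\big)-\sum_{v\in S_f}\chi\big(C^\bullet(G_v,X)\big).
\end{equation*}
For the global term, the ($R$-linear) global Euler--Poincar\'e formula over the totally imaginary field $K$, together with $p$ odd so that archimedean places contribute nothing, gives $\chi\big(C^\bullet(G_{K,S},X)\big)=-r_2(K)\cdot\rank_R X=-[F:\bbQ]\cdot\rank_R X$. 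For a finite place $v\nmid p$ the local Euler--Poincar\'e formula gives $\chi\big(C^\bullet(G_v,X)\big)=0$, and likewise $\chi\big(C^\bullet(G_v/I_v,X^{I_v})\big)=0$ since that complex is quasi-isomorphic to $\big[X^{I_v}\xrightarrow{\frob-1}X^{I_v}\big]$; hence all places away from $p$ contribute $0$ on both sides and cancel. For $v\mid p$ one has $\chi\big(C^\bullet(G_v,X)\big)=-[K_v:\bbQ_p]\cdot\rank_R X$, whereas from the definition of the Greenberg conditions $U_v^\pm$ one reads off $\chi\big(U_v^+(X)\big)=-[K_v:\bbQ_p]\cdot\rank_R X$ if $v\in\Sigma^c$ and $\chi\big(U_v^+(X)\big)=0$ if $v\in\Sigma$.

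The remaining input is the $p$-ordinary bookkeeping. By \eqref{eqn:ordinaryassumption} each prime $\wp$ of $F$ above $p$ splits in $K$ as $\wp\ccO_K=\frap\frap^c$ with $K_\frap\cong K_{\frap^c}\cong F_\wp$, and the $p$-adic CM type $\Sigma$ contains exactly one prime of each conjugate pair, so $\sum_{v\in\Sigma}[K_v:\bbQ_p]=\sum_{v\in\Sigma^c}[K_v:\bbQ_p]=[F:\bbQ]$ while $\sum_{v\mid p}[K_v:\bbQ_p]=[K:\bbQ]=2[F:\bbQ]$. Substituting, $\sum_{v\in S_f}\chi\big(U_v^+(X)\big)=-[F:\bbQ]\cdot\rank_R X$ and $\sum_{v\in S_f}\chi\big(C^\bullet(G_v,X)\big)=-2[F:\bbQ]\cdot\rank_R X$, whence
\begin{equation*}
\chi\big(\widetilde{R\Gamma}_f(G_{K,S},X;\Delta_\Sigma)\big)=\big(-[F:\bbQ]-[F:\bbQ]+2[F:\bbQ]\big)\cdot\rank_R X=0.
\end{equation*}
This applies verbatim to both $(X,\Gamma)=(\bbT_\chi^\iota,\Gamma_K)$ and $(X,\Gamma)=(\bbT_\chi^{\ac,\iota},\Gamma_{\rm ac})$, in each of which $X$ is free of rank one over $R$, giving $\widetilde h^0-\widetilde h^1+\widetilde h^2-\widetilde h^3=0$ in both cases. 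There is no substantial obstacle: the only point requiring care is that $\widetilde{R\Gamma}_f(G_{K,S},X;\Delta_\Sigma)$ really is perfect over $R=\Lambda(\Gamma)$ and cohomologically concentrated in degrees $[0,3]$, so that the naive alternating sum of ranks agrees with $\chi$ of the complex, and that the global and local Euler--Poincar\'e formulas are available with coefficients in $\Lambda(\Gamma)$ -- all of which is part of Nekov\'a\v r's package \cite[\S\S 6--7]{Nek06}, after which the argument is just the computation above. (Alternatively, one may deduce the same vanishing directly from Nekov\'a\v r's Euler characteristic formula for Selmer complexes in \cite[\S 7.8]{Nek06}.)
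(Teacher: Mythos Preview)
Your proposal is correct and follows essentially the same approach as the paper: both compute the Euler--Poincar\'e characteristic of the Selmer complex via the global and local Euler--Poincar\'e formulas and then invoke the $p$-ordinary hypothesis \eqref{eqn:ordinaryassumption} to conclude $\sum_{v\in\Sigma^c}[K_v:\bbQ_p]=[F:\bbQ]$. The only difference is presentational: the paper quotes Nekov\'a\v r's packaged formula (\cite[Theorem 8.9.15 and Proposition 9.7.2(ii)]{Nek06}) to obtain $\widetilde h^0-\widetilde h^1+\widetilde h^2-\widetilde h^3=\sum_{v\mid\infty}1-\sum_{v\in\Sigma^c}[K_v:\bbQ_p]$ directly, whereas you unwind the cone and recompute each piece by hand --- which is exactly the alternative you mention in your final parenthetical.
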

\begin{proof}
Since $K$ is a CM field, it follows from the argument in the proof of \cite[Proposition 9.7.2 (ii)]{Nek06} applied to our case and \cite[Theorem 8.9.15]{Nek06} that
\begin{align*}
\widetilde{h}_0-\widetilde{h}_1+\widetilde{h}_2-\widetilde{h}_3&=\sum_{v\;\mid \; \infty}1-\sum_{v\in \Sigma^c}[K_v:\bbQ_p]\\
&=[F:\bbQ]-\sum_{v\in \Sigma^c}[K_v:\bbQ_p].
\end{align*}
It follows from our running hypothesis \eqref{eqn:ordinaryassumption} that
\begin{equation*}
 [F:\bbQ]=  \sum_{v\in \Sigma^c}[K_v:\bbQ_p],
\end{equation*}
which completes the proof.
\end{proof}

\vspace{1.2mm}

\begin{lemma}\label{tamagawa_ac} --- 
Let $\fraq$ be a height-one prime ideal of $\Lambda(\Gamma_{\rm ac})$. Then
 ${\rm Tam}_v(\bbT_\chi^{\rm ac,\iota},\fraq)=0$ for every $v\in S_f$ prime to $p$.
\end{lemma}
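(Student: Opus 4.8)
The plan is to show the Tamagawa factor vanishes by analysing the local cohomology at primes $v \nmid p$ over the anticyclotomic Iwasawa algebra. Recall that, by \cite[Definition 7.6.10]{Nek06}, $\mathrm{Tam}_v(\bbT_\chi^{\mathrm{ac},\iota},\fraq)$ is the length (at the height-one prime $\fraq$) of a certain quotient measuring the failure of the unramified local condition to behave well, and it is controlled by $H^1(K_v^{\mathrm{unr}}/K_v, (\bbT_\chi^{\mathrm{ac},\iota})^{I_v})$ together with the $I_v$-coinvariants. The key point is that since $v \nmid p$ and the only ramification in $K_\infty/K$ sits above $p$, the inertia group $I_v$ acts on $\Lambda(\Gamma_{\mathrm{ac}})^\iota$ trivially, so the $G_v$-action on $\bbT_\chi^{\mathrm{ac},\iota}$ factors through a finite quotient times the unramified twist by $\Gamma_{\mathrm{ac}}$. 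First I would reduce, via \cite[Lemma 7.6.10]{Nek06} or the description in \cite[\S 6.3]{BL20}, the vanishing of $\mathrm{Tam}_v$ at $\fraq$ to showing that the finitely generated torsion $\Lambda(\Gamma_{\mathrm{ac}})$-module governing the Tamagawa factor has trivial $\fraq$-part — or better, is trivial outright, or pseudo-null, or has characteristic ideal coprime to $\fraq$.

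The main technical step is to identify that module explicitly. Writing $\bbT_\chi^{\mathrm{ac},\iota} = T_\chi \otimes \Lambda(\Gamma_{\mathrm{ac}})^\iota$ and using that $T_\chi = \frakO(1)\otimes\chi^{-1}$ is unramified at $v \nmid p \fraf_\chi$, the local representation at $v$ is, up to the finite piece coming from $\chi$ and the conductor, of the form $\Lambda(\Gamma_{\mathrm{ac}})^\iota(\text{unramified character})$. The Tamagawa factor is then measured by the module $(\bbT_\chi^{\mathrm{ac},\iota})_{I_v} / (\mathrm{Frob}_v - 1)$, or equivalently by $H^2(K_v, \bbT_\chi^{\mathrm{ac},\iota})$ which by local Tate duality is dual to $H^0(K_v, (\bbT_\chi^{\mathrm{ac},\iota})^\vee(1))$. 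I would argue that this $H^0$ vanishes: an element fixed by $G_v$ would force the unramified character $\mathrm{Frob}_v \mapsto$ (value in $\Lambda(\Gamma_{\mathrm{ac}})^\times$) composed with the relevant twist to be trivial, but the image of $\mathrm{Frob}_v$ in $\Gamma_{\mathrm{ac}}$ is a non-torsion element (or the combined eigenvalue equation $N(v)\chi(\mathrm{Frob}_v)^{-1}\cdot\gamma = 1$ has no solution in $\Lambda(\Gamma_{\mathrm{ac}})$ unless $\gamma$ is forced to a unit that does not lie in $\Gamma_{\mathrm{ac}}$), so $(\mathrm{Frob}_v - 1)$ acts injectively with torsion-free-supported cokernel. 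More precisely, I expect the cleanest route is: $H^2(K_v,\bbT_\chi^{\mathrm{ac},\iota})$ is a quotient of $H^0(K_v, \bbT_\chi^{\mathrm{ac},\iota}/\mathfrak{m})\otimes(\text{something})$-type data, and one checks directly that $\mathrm{char}_{\Lambda(\Gamma_{\mathrm{ac}})}$ of the relevant module is a product of factors $(\gamma - \text{unit})$ which are coprime to any height-one prime, or the module is simply finite, hence its support consists of maximal ideals and does not meet $\mathrm{ht}_1(\Lambda(\Gamma_{\mathrm{ac}}))$.

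Concretely, the structure of the argument I would write is: (1) note $\Sigma(\ccF_{\mathrm{can}})$ contains all $v \mid \fraf_\chi p \infty$, so for $v \in S_f$ prime to $p$ at which $\bbT_\chi^{\mathrm{ac},\iota}$ is unramified there is nothing to prove as the Tamagawa factor is $0$ by definition; (2) for the finitely many $v \mid \fraf_\chi$, invoke that $I_v$ acts through a finite quotient on $\bbT_\chi^{\mathrm{ac},\iota}$ (since $\Lambda(\Gamma_{\mathrm{ac}})^\iota$ is unramified at $v \nmid p$), so $(\bbT_\chi^{\mathrm{ac},\iota})^{I_v}$ is a free $\Lambda(\Gamma_{\mathrm{ac}})$-module of rank $0$ or $1$; (3) in either case compute that the Tamagawa module $H^1_{\mathrm{ur}}(K_v, -)$-cokernel, which by \cite[7.6.10]{Nek06} equals the torsion of $H^1(K_v/K_v^{\mathrm{unr}}, (\bbT_\chi^{\mathrm{ac},\iota})^{I_v})$ under $\mathrm{Frob}_v - 1$, is a finitely generated torsion $\Lambda(\Gamma_{\mathrm{ac}})$-module whose characteristic ideal is generated by $\det(1 - \mathrm{Frob}_v \mid (\bbT_\chi^{\mathrm{ac},\iota})^{I_v})$; (4) observe this determinant, being of the form $1 - u\gamma$ with $u \in \frakO^\times$ and $\gamma$ the image of $\mathrm{Frob}_v$ in $\Gamma_{\mathrm{ac}}$ (a topologically generating direction, in particular $\gamma \neq 1$ and $\gamma - 1$ is a nonzerodivisor), is a \emph{unit times a distinguished polynomial of positive degree or a unit}; either way it is coprime to $\fraq$ — here one uses that $\fraq$ is a fixed height-one prime and the only way $\det(1-\mathrm{Frob}_v)$ could lie in $\fraq$ is excluded because... and this is where the real content lies.

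The hardest part, and the step I expect to be the main obstacle, is precisely (4): ruling out that the Euler factor $\det(1 - \mathrm{Frob}_v \mid (\bbT_\chi^{\mathrm{ac},\iota})^{I_v})$ is divisible by the given height-one prime $\fraq$. In general an Euler factor of an Iwasawa-theoretic local representation \emph{can} lie in a height-one prime. The resolution must come from the specific structure of $\bbT_\chi^{\mathrm{ac},\iota}$: because $\chi$ is anticyclotomic and $T_\chi$ is a twist of $\frakO(1)$ by a finite-order character, the $\mathrm{Frob}_v$-eigenvalue on the $I_v$-invariants is of the shape $\chi(\mathrm{Frob}_v)^{-1} N(v) \cdot \gamma_v$ where $\gamma_v \in \Gamma_{\mathrm{ac}}$; for this to generate a height-one prime containing $\fraq$ one would need $\fraq = (1 - \chi(\mathrm{Frob}_v)^{-1}N(v)\gamma_v)$, and since distinct such Euler factors generate distinct height-one primes while $\fraq$ was fixed \emph{independently} of $v$... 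Actually the cleaner statement — and the one I would aim to prove — is that $(\bbT_\chi^{\mathrm{ac},\iota})^{I_v}$ is \emph{zero} for $v \mid \fraf_\chi$, because $\chi$ restricted to $I_v$ is nontrivial (as $v$ ramifies in $L_\chi$) while the $\Lambda(\Gamma_{\mathrm{ac}})^\iota$-factor and the Tate twist contribute nothing $I_v$-invariant; then $H^1(K_v^{\mathrm{unr}}/K_v, (\bbT_\chi^{\mathrm{ac},\iota})^{I_v}) = 0$ trivially, and the Tamagawa factor vanishes for the cheapest possible reason. I would first attempt to establish this vanishing of $I_v$-invariants directly from hypothesis \eqref{eqn:(1.1)} and the ramification of $\chi$, and only fall back on the Euler-factor-coprimality argument if some $v \mid \fraf_\chi$ has $\chi$ unramified at $v$ (which, given $\fraf_\varphi = \overline{\fraf}_\varphi$ and the conductor conventions, may or may not occur). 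Either way, the write-up hinges on pinning down $(\bbT_\chi^{\mathrm{ac},\iota})^{I_v}$ and then reading off that the associated Tamagawa module is supported away from height-one primes.
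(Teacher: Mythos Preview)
The paper's proof is a two-line citation: it invokes \cite[Corollary 8.9.7.4 and 7.6.10.8]{Nek06}, which together give the vanishing of Tamagawa factors at $v\nmid p$ for any module of the shape $T\otimes\Lambda(\Gamma)^\iota$ over an Iwasawa algebra. No computation specific to $T_\chi$ is carried out.

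Your proposal instead attempts a direct computation. The ``cleaner statement'' you eventually isolate---that $(\bbT_\chi^{\ac,\iota})^{I_v}=0$ because $\chi|_{I_v}$ is nontrivial---does work, and your hesitation about it is unfounded: by the very definition $S=\Sigma(\ccF_{\can})$, every $v\in S_f$ with $v\nmid p$ is a prime at which $T_\chi$ is ramified, hence (since $\chi_{\cyc}$ is unramified away from $p$) at which $\chi$ is ramified. Because $\chi$ has order prime to $p$, any $\sigma\in I_v$ with $\chi(\sigma)\neq 1$ acts on $T_\chi$ by a scalar $\chi(\sigma)^{-1}$ with $\chi(\sigma)^{-1}-1\in\frakO^\times$, so both $T_\chi^{I_v}$ and $(T_\chi)_{I_v}$ vanish; consequently $H^i(I_v,\bbT_\chi^{\ac,\iota})=0$ for all $i$ and ${\rm Tam}_v=0$ trivially. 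This hands-on route is a legitimate alternative to the paper's citation, arguably more transparent in this rank-one setting.

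That said, your steps (3)--(4) contain a real confusion. You identify the Tamagawa module with the module whose characteristic ideal is the Euler factor $\det(1-\mathrm{Frob}_v\mid(\bbT_\chi^{\ac,\iota})^{I_v})$. That determinant governs $H^1_{\mathrm{ur}}(K_v,\bbT_\chi^{\ac,\iota})=(\bbT_\chi^{\ac,\iota})^{I_v}/(\mathrm{Frob}_v-1)$ itself, not the Tamagawa defect, which instead measures torsion in $H^1(I_v,\bbT_\chi^{\ac,\iota})\cong(\bbT_\chi^{\ac,\iota})_{I_v}(-1)$. An Euler factor lying in a height-one prime is simply irrelevant to whether ${\rm Tam}_v(\bbT_\chi^{\ac,\iota},\fraq)=0$, so the coprimality discussion in (4) is chasing the wrong invariant. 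Drop that detour and commit to the $I_v$-(co)invariants argument---or cite Nekov\'a\v{r}'s general result as the paper does.
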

\begin{proof} The result follows from \cite[Corollary 8.9.7.4 and 7.6.10.8] {Nek06} applied with $X=\bbT_\chi^{\rm ac,\iota}$ and $\overline{R}=\Lambda(\Gamma_{\rm ac})$.

\end{proof}

\begin{lemma}\label{rank_zero_lemma} ---
We have the following vanishing results:
 \begin{align*}
    \widetilde{H}_{f}^0\left(G_{K,S},\bbT_\chi^{\ac,\iota};\Delta_\Sigma\right)&=\widetilde{H}_{f}^3\left(G_{K,S},\bbT_\chi^{\ac,\iota};\Delta_\Sigma\right)=0\\
    \widetilde{H}_{f}^0\left(G_{K,S},\bbT_\chi^\iota;\Delta_\Sigma\right)&=\widetilde{H}_{f}^3\left(G_{K,S},\bbT_\chi^\iota;\Delta_\Sigma\right)=0 .
\end{align*}
\end{lemma}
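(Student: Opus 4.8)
The plan is to reduce all four vanishing assertions to the triviality of a single global degree-zero Galois cohomology group, and then to deduce that triviality from the observation that the Iwasawa deformation variable acts through a quotient of $G_K$ on which only a nontrivial finite-order character of prime-to-$p$ order can survive, so that hypotheses \eqref{(1.3)} and \eqref{(1.2)} close the argument.

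First I would handle the degree-zero statements. The defining exact triangle of the Selmer complex in Definition~\ref{selmer_complex} yields, in degree $0$, an exact sequence
\begin{equation*}
0\longrightarrow \widetilde{H}^0_f(G_{K,S},X;\Delta_\Sigma)\longrightarrow H^0(G_{K,S},X)\oplus H^0\!\big(U_S^+(X)\big)\xrightarrow{\;{\rm res}_S-i_S^+\;}\bigoplus_{v\in S_f}H^0(K_v,X),
\end{equation*}
and each $i_v^+$ is injective in degree $0$ (for $v\in\Sigma$ the source is $0$; for $v\in\Sigma^c$ it is the identity of $C^\bullet(G_v,X)$; for $v\nmid p$ it induces the identity of $X^{G_v}$ on $H^0$), so $H^0(U_S^+(X))$ embeds into $\bigoplus_{v\in S_f}H^0(K_v,X)$ and hence $\widetilde{H}^0_f(G_{K,S},X;\Delta_\Sigma)\hookrightarrow H^0(G_{K,S},X)=X^{G_K}$. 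For the degree-three statements I would invoke Nekov\'a\v r's duality \cite[8.9.6.2]{Nek06}, exactly as in the proof of Lemma~\ref{isom_selmer_groups}, to identify $\widetilde{H}^3_f(G_{K,S},X;\Delta_\Sigma)$ with the Pontryagin dual of $\widetilde{H}^0_f(G_{K,S},X^\vee(1);\Delta_\Sigma^*)^\iota$ (with the dual Greenberg local conditions), and then re-run the previous paragraph for $X^\vee(1)$. So everything comes down to showing $X^{G_K}=0$ and $(X^\vee(1))^{G_K}=0$ for $X\in\{\bbT_\chi^\iota,\bbT_\chi^{\ac,\iota}\}$.

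These four $H^0$-computations are routine and all of the same shape. For $X=\bbT_\chi^{\ac,\iota}=T_\chi\otimes\Lambda(\Gamma_\ac)^\iota$ the subgroup $\ker(G_{K,S}\twoheadrightarrow\Gamma_\ac)$ acts on $X$ only through $T_\chi=\frakO(\chi_\cyc\chi^{-1})$ and on $X^\vee(1)$ only through $\chi$ (since $T_\chi^\vee(1)\cong(\frakF/\frakO)(\chi)$); because $K^\ac\cap K(\mu_{p^\infty})=K$ the cyclotomic character still has infinite image on this subgroup, and because $L_\chi\cap K^\ac=K$ the character $\chi$ restricts nontrivially to it by \eqref{(1.3)}, so both $\chi_\cyc\chi^{-1}$ and $\chi$ act nontrivially, and a nontrivial character of prime-to-$p$ order has no nonzero invariants on a free $\frakO$- or $\frakF/\frakO$-module. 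For $X=\bbT_\chi^\iota=T_\chi\otimes\Lambda(\Gamma_K)^\iota$ one runs the same argument with $\ker(G_{K,S}\twoheadrightarrow\Gamma_K)$: as $K_\infty\supseteq K^\cyc$, the wild part of $\chi_\cyc$ dies on this subgroup, which then acts on $T_\chi$ through $\omega\chi^{-1}$ and on $T_\chi^\vee(1)$ through $\chi$, and comparing $\chi$ and $\omega$ on $\gal(L_\chi K(\mu_p)/K)$ --- legitimate since $L_\chi K(\mu_p)\cap K_\infty=K$ --- one sees that $\omega\chi^{-1}$ is nontrivial on it by \eqref{(1.2)} and $\chi$ by \eqref{(1.3)}.

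I do not expect a genuine obstacle here: the only point requiring any attention is tracking the dual Greenberg local conditions $\Delta_\Sigma^*$ in the duality isomorphism used for the degree-three part, but this is precisely the bookkeeping already carried out for Lemma~\ref{isom_selmer_groups}, so I would simply point to it rather than redo it.
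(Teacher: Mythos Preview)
Your argument is correct and takes a genuinely different route from the paper's. For the degree-zero vanishing the paper simply cites \cite[Proposition~9.7.2]{Nek06}, whereas you unwind the cone and reduce to $X^{G_K}=0$, which you then check by hand on the kernel of the projection to $\Gamma_\ac$ (resp.\ $\Gamma_K$). The more substantive divergence is in degree three: the paper works at the \emph{finite level}, identifying $\widetilde{H}^3_f(G_{K,S},T_\chi;\Delta_\Sigma)$ with $(\frakO[\Delta_\chi/\Delta_\frap])^\chi$ via \cite[\S9.5]{Nek06} and \cite{buy14}, shows this vanishes using the local hypothesis \eqref{eqn:(1.1)} together with \eqref{(1.2)}, and then lifts to $\bbT_\chi^\iota$ and $\bbT_\chi^{\ac,\iota}$ by the control result \cite[Proposition~8.10.13]{Nek06}. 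You instead dualize directly at the Iwasawa level via \cite[8.9.6.2]{Nek06}, reducing $\widetilde{H}^3_f$-vanishing to $(X^\vee(1))^{G_K}=0$, and conclude from the nontriviality of $\chi$ (hypothesis \eqref{(1.3)}) together with the disjointness of $L_\chi$ from the relevant $\bbZ_p$-extensions. Your approach is more elementary and self-contained, and in fact avoids invoking \eqref{eqn:(1.1)} for this lemma; the paper's route has the compensating virtue of giving an explicit description of $\widetilde{H}^3_f$ at the finite level rather than merely bounding it.
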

\begin{proof}
 The verification of $\widetilde{H}_{f}^0\left(G_{K,S},\bbT_\chi^\iota;\Delta_\Sigma\right)=\widetilde{H}_{f}^0\left(G_{K,S},\bbT_\chi^{\ac,\iota};\Delta_\Sigma\right)=0$ follows from \cite[Proposition~9.7.2]{Nek06}. Since \eqref{eqn:(1.1)} and \eqref{(1.2)} hold true, we deduce that (see \cite[Section 9.5]{Nek06} and \cite[Page 5859]{buy14}) $\widetilde{H}_{f}^3(G_{K,S},T_\chi;\Delta_\Sigma)=\left(\frakO\left[\Delta_\chi/\Delta_\frap\right]\right)^\chi=0$, where $\Delta_\chi:=\gal(L_\chi/K)$ and $\Delta_\frap$ is the decomposition group of a fixed prime ideal of $L_\chi$ lying above $\frap$ in $\Delta_\chi$. The assertion $\widetilde{H}_{f}^3\left(G_{K,S},\bbT_\chi^\iota;\Delta_\Sigma\right)=\widetilde{H}_{f}^3\left(G_{K,S},\bbT_\chi^{\ac,\iota};\Delta_\Sigma\right)=0$ now follows from \cite[Propositions 8.10.13]{Nek06}. 
\end{proof}

\vspace{1.2mm}

Let $H_{\ccF_{\ccL}^*}^1\left(K,T_\chi^\vee(1)\right)$ denote the Selmer module given by the $\ccL$-modified Selmer structures in \cite[Definition 2.14]{buy14}.

\begin{hyp}\label{hypothesisonfiniteness}
    We assume that $H_{\ccF_{\ccL}^*}^1\left(K,T_\chi^\vee(1)\right)$ is finite.
\end{hyp}

\begin{remark} ---
    If we assume the Rubin--Stark conjecture, it then follows from \cite[Theorem 4.1]{buy14} that  Hypothesis \ref{hypothesisonfiniteness} holds true.
\end{remark}

\begin{prop}\label{prop_rank} ---
    For the $\Lambda(\Gamma_\ac)$-module $\widetilde{H}^1_{f}\left(G_{K,S},\bbT_\chi^{\ac,\iota};\Delta_\Sigma\right)$ we have 
    \begin{equation*}
        {\rm rank}_{\Lambda(\Gamma_{\rm ac})}\left(\widetilde{H}^1_{f}\left(G_{K,S},\bbT_\chi^{\ac,\iota};\Delta_\Sigma\right)\right)\leq 1.
    \end{equation*}
\end{prop}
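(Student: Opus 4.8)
The plan is to reduce the bound first to a statement about degree‑two cohomology and then to a cyclicity statement, and finally to feed in Hypothesis~\ref{hypothesisonfiniteness}. First, by Lemma~\ref{euler_characteristic} applied to $(X,\Gamma)=(\bbT_\chi^{\ac,\iota},\Gamma_{\rm ac})$ one has $\widetilde h^0-\widetilde h^1+\widetilde h^2-\widetilde h^3=0$, and by Lemma~\ref{rank_zero_lemma} one has $\widetilde h^0=\widetilde h^3=0$; hence $\widetilde h^1=\widetilde h^2$, and it suffices to prove ${\rm rank}_{\Lambda(\Gamma_{\rm ac})}\widetilde{H}^2_{f}(G_{K,S},\bbT_\chi^{\ac,\iota};\Delta_\Sigma)\le 1$.

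Next I would descend from the full Iwasawa algebra. Identifying $\Lambda(\Gamma_{\rm ac})=\Lambda(\Gamma_K)/(\gamma_{\rm cyc}-1)$ and $\bbT_\chi^{\ac,\iota}=\bbT_\chi^{\iota}\otimes_{\Lambda(\Gamma_K)}\Lambda(\Gamma_{\rm ac})$, Nekov\'a\v r's base‑change and control statement for Selmer complexes (\cite[\S8.9--8.10]{Nek06}) together with $\widetilde{H}^3_{f}(G_{K,S},\bbT_\chi^{\iota};\Delta_\Sigma)=0$ (Lemma~\ref{rank_zero_lemma}) gives
\[\widetilde{H}^2_{f}(G_{K,S},\bbT_\chi^{\ac,\iota};\Delta_\Sigma)\ \cong\ \widetilde{H}^2_{f}(G_{K,S},\bbT_\chi^{\iota};\Delta_\Sigma)\big/(\gamma_{\rm cyc}-1).\]
By Lemma~\ref{reformulationofIMClemma} the numerator is a finitely generated torsion $\Lambda(\Gamma_K)$-module; since $\gamma_{\rm cyc}-1$ is a prime of the unique factorization domain $\Lambda(\Gamma_K)$ and a pseudo‑null $\Lambda(\Gamma_K)$-module, being supported in codimension $\ge 2$, localizes to $0$ at the height‑one prime $(\gamma_{\rm cyc}-1)$, the $\Lambda(\Gamma_{\rm ac})$-rank of the quotient equals the number of invariant factors of $\widetilde{H}^2_{f}(G_{K,S},\bbT_\chi^{\iota};\Delta_\Sigma)$ divisible by $\gamma_{\rm cyc}-1$. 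Thus the proposition is equivalent to the assertion that the localization of $\widetilde{H}^2_{f}(G_{K,S},\bbT_\chi^{\iota};\Delta_\Sigma)$ at $(\gamma_{\rm cyc}-1)$ is cyclic over that discrete valuation ring.

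To establish this near‑cyclicity I would use Hypothesis~\ref{hypothesisonfiniteness}. By Lemma~\ref{isom_selmer_groups} and Poitou--Tate duality, $\widetilde{H}^2_{f}(G_{K,S},\bbT_\chi^{\ac,\iota};\Delta_\Sigma)$ is the Pontryagin dual of an $\iota$-twist of a Greenberg‑type Selmer group attached to $\Hom(\bbT_\chi^{\ac},\frakF/\frakO)(1)$. Comparing the local condition at the primes above $p$ that defines this Selmer group with the $\ccL$-modified condition of \cite[Definition~2.14]{buy14} — the two being commensurable, with difference a local term of $\Lambda(\Gamma_{\rm ac})$-corank at most one, which is the point where the $p$-ordinary splitting \eqref{eqn:ordinaryassumption} enters — it suffices to show that the $\ccL$-modified Iwasawa Selmer group $H^1_{\ccF_{\ccL}^*}(K,\Hom(\bbT_\chi^{\ac},\frakF/\frakO)(1))$ is $\Lambda(\Gamma_{\rm ac})$-cotorsion. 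This follows from a Mazur‑type control theorem along $K^{\rm ac}/K$: condition \eqref{eqn:(1.1)}, $\chi(\frap)\ne 1$ for $\frap\mid p$, makes the obstructing local $H^0$- and $H^2$-terms vanish, so the control map has finite kernel and cokernel, and the finiteness of $H^1_{\ccF_{\ccL}^*}(K,T_\chi^\vee(1))$ postulated in Hypothesis~\ref{hypothesisonfiniteness} propagates to $\Lambda(\Gamma_{\rm ac})$-cotorsionness. Combining the two inputs gives $\widetilde h^2=\widetilde h^1\le 0+1=1$.

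I expect the delicate point to be the last step: writing down the $\ccL$-modified local condition at $p$ and checking that its discrepancy with the Greenberg condition has $\Lambda(\Gamma_{\rm ac})$-corank at most one (this is exactly where the ordinariness hypothesis is indispensable), together with running the control theorem along the $\bbZ_p^{g}$-extension $K^{\rm ac}/K$, whose augmentation ideal is not principal — so that one must invoke Nekov\'a\v r's descent formalism from \cite[\S11]{Nek06} directly, or carry out a Koszul‑complex bookkeeping, to pass from the bottom layer up to $\Lambda(\Gamma_{\rm ac})$. By contrast, the Euler‑characteristic reduction and the handling of pseudo‑null modules in the descent from $\Gamma_K$ are routine.
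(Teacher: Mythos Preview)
Your core argument is essentially the paper's: reduce via the Euler characteristic and Lemma~\ref{rank_zero_lemma} to bounding the $\Lambda(\Gamma_{\rm ac})$-rank of $\widetilde{H}^2_f$, identify this with the dual of a classical Selmer group, compare with the $\ccL$-modified Selmer structure (whose discrepancy contributes corank at most one, which the paper extracts from the exact sequence of \cite[Proposition~2.21(i)]{buy14}), and show the $\ccL$-modified group is $\Lambda(\Gamma_{\rm ac})$-cotorsion by control together with Hypothesis~\ref{hypothesisonfiniteness}. The detour through $\Gamma_K$ --- descending by $(\gamma_{\rm cyc}-1)$ and rephrasing the bound as cyclicity of $\widetilde{H}^2_f(G_{K,S},\bbT_\chi^\iota;\Delta_\Sigma)$ at that height-one prime --- is set up but then never used: your final paragraph bounds the rank directly at the $\Gamma_{\rm ac}$ level, so that paragraph can simply be deleted. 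One point where the paper is sharper than your sketch: for the control step it works with the Selmer complex $\widetilde{R\Gamma}_f(G_{K,S},\bbT_\chi^{\ac,\iota};\Delta_{\ccL})$ and applies \cite[Proposition~8.10.13(ii)]{Nek06}, which requires first checking $\widetilde{H}^3_f(G_{K,S},\bbT_\chi^{\ac,\iota};\Delta_{\ccL})=0$ --- this is the precise replacement for the ``Mazur-type control plus Koszul bookkeeping'' you allude to, and it handles the non-principal augmentation ideal of $\Lambda(\Gamma_{\rm ac})$ cleanly.
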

\begin{proof}
    First, observe that $\widetilde{H}^1_{f}\left(G_{K,S},\bbT_\chi^{\ac,\iota};\Delta_\Sigma\right)$ and $\widetilde{H}^2_{f}\left(G_{K,S},\bbT_\chi^{\ac,\iota};\Delta_\Sigma\right)$ have the same $\Lambda(\Gamma_\ac)$-rank by Lemma~\ref{euler_characteristic} and Lemma \ref{rank_zero_lemma}. By comparing the local conditions and utilizing \cite[8.9.10]{Nek06} one can show that
    \begin{equation*}
          \widetilde{H}^2_{f}\left(G_{K,S},\bbT_\chi^{\ac,\iota};\Delta_\Sigma\right) \cong  \left( H_{\ccF_\Sigma^*}^1\left(K,(\bbT_\chi^\ac)^\vee(1)\right)^\iota\right)^{\vee}.
    \end{equation*}
By the exact sequence in \cite[Proposition 2.21 (i)]{buy14}, it suffices to show that the  $\Lambda(\Gamma_\ac)$-module $\left(H^1_{\ccF^*_{\ccL_\ac}}\left(K,(\bbT_\chi^\ac)^\vee(1)\right)^\iota\right)^\vee $ is torsion.

\vspace{1.2mm}

Let us denote by $ \widetilde{R\Gamma}_{f}\left(G_{K,S},\bbT_\chi^{\ac,\iota};\Delta_\ccL\right)  $ the Selmer complex which is given by the Greenberg local conditions corresponding to the $\ccL_\ac$-modified local conditions in \cite[Definition 2.14 (ii)]{buy14} so that
 \begin{equation*}
       \widetilde{H}^2_{f}\left(G_{K,S},\bbT_\chi^{\ac,\iota};\Delta_\ccL\right)      \cong  \left( H_{\ccF_{\ccL_\ac}^*}^1\left(K,(\bbT_\chi^\ac)^\vee(1)\right)^\iota\right)^{\vee}.
    \end{equation*}
    
The assertion $ \widetilde{H}_{f}^3\left(G_{K,S},\bbT_\chi^{\ac,\iota};\Delta_\ccL\right) =0$ can be proven following the proof of \cite[Proposition~2.7]{bs23} \textit{verbatim}. It now follows from \cite[Proposition 8.10.13 (ii)]{Nek06} that
\begin{align*}
     \widetilde{H}_{f}^2\left(G_{K,S},\bbT_\chi^{\ac,\iota};\Delta_\ccL\right) \Big/I_\ac\cdot \widetilde{H}_{f}^2\left(G_{K,S},\bbT_\chi^{\ac,\iota};\Delta_\ccL\right) &\cong \widetilde{H}_{f}^2\left(G_{K,S},T_\chi;\Delta_\ccL\right)\\
     &\cong\left( H_{\ccF_{\ccL}^*}^1\left(K,T_\chi^\vee(1)\right)^\iota\right)^{\vee} ,
\end{align*}
where $I_\ac:={\rm ker}(\Lambda(\Gamma_\ac)\longrightarrow \frakO)$ is the augmentation ideal. Therefore, Nakayama's lemma implies that $ \widetilde{H}_{f}^2\left(G_{K,S},\bbT_\chi^{\ac,\iota};\Delta_\ccL\right) $ is $\Lambda(\Gamma_\ac)$-torsion as $\left( H_{\ccF_{\ccL}^*}^1\left(K,T_\chi^\vee(1)\right)^\iota\right)^{\vee}$ is finite by assumption, which completes the proof.
\end{proof}

\section{Anticyclotomic main conjectures}\label{anticyclotomicmainconjecture}

In this section, we will prove the main theorems which can be regarded as the algebraic formulation of the anticyclotomic main conjecture.

\begin{definition} ---
Recall the augmentation map  $\mathbbm{1}_{\rm cyc}:R_{\rm cyc}\longrightarrow R$. Suppose $R$ is a Krull unique factorization domain and $J\subset R_{\rm cyc}$ is an ideal. We set
\[\partial^*_{\rm cyc}J:=\begin{cases}
\mathbbm{1}_{\rm cyc}(J),& \; \text{if} \; (\gamma_{\rm cyc}-1)\nmid J\\
\mathbbm{1}_{\rm cyc}((\gamma_{\rm cyc}-1)^{-1}J),&\; \text{if}\; (\gamma_{\rm cyc}-1)\;\mid \; J
\end{cases}
\]
and call it the mock leading term for $J$. In particular, for a given element $\bbA:=\sum_{n\geq 0}A_n\cdot(\gamma_\cyc-1)^n\in R_{\rm cyc}$ (so that we have $A_n\in R$), we set
    \begin{equation*}
 \partial^*_{\rm cyc}\bbA:=\begin{cases}A_0, \;\;& {\rm if}\;\mathbbm 1_\cyc(\bbA)\neq 0 ,\\ 
    A_1, \;\;& {\rm if}\; \mathbbm 1_\cyc(\bbA)= 0.     \end{cases}
\end{equation*}

Further, for a torsion $R_{\rm cyc}$-module $M$ we have
\[\partial^*_{\rm cyc}{\rm char}_{R_{\rm cyc}}(M)=\begin{cases}
\partial_{\rm cyc}{\rm char}_{R_{\rm cyc}}(M),& \; \text{if} \; r(M)\leq 1\\
0,&\; \text{if}\; r(M)>1.
\end{cases}
\]
\end{definition}

\vspace{1.2mm}

\begin{theorem}\label{main_theoremB} --- Assume that Hypothesis \ref{hypothesisonfiniteness} holds. If ${\rm Reg}(\bbT_\chi^{\rm ac,\iota})\neq 0$ then we have 
\begin{equation*}
  {\rm char}_{\Lambda(\Gamma_{\rm ac})}\Big(\widetilde{H}_{f}^2\left(G_{K,S},\bbT_\chi^{\ac,\iota};\Delta_\Sigma\right)_{\rm tor} \Big)  \cdot{\rm Reg}\left(\bbT_\chi^{\rm ac,\iota}\right)= \partial^*_{\rm cyc}{\rm char}_{\Lambda(\Gamma_K)}\Big(\widetilde{H}_{f}^2\left(G_{K,S},\bbT_\chi^\iota;\Delta_\Sigma\right) \Big).
\end{equation*}
\end{theorem}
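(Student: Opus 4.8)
The plan is to apply Nekov\'a\v r's abstract Birch--Swinnerton-Dyer formula, Proposition \ref{abstract_BSD_formulaeprop}, with $R=\Lambda(\Gamma_{\rm ac})$, $X=\bbT_\chi^{\ac,\iota}$ and the Greenberg local conditions $\Delta(X)=\Delta_\Sigma$, so that $R_{\rm cyc}=R\,\widehat{\otimes}\,\Lambda(\Gamma_{\rm cyc})=\Lambda(\Gamma_K)$ and $X_{\rm cyc}=\bbT_\chi^\iota$. Since $\Gamma_{\rm ac}$ is a free $\bbZ_p$-module, $\Lambda(\Gamma_{\rm ac})$ is a regular local ring, hence a Krull unique factorization domain, so the UFD versions of Proposition \ref{abstract_BSD_formulaeprop} and Lemma \ref{nekovardescentlemma} are available. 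Throughout write $M:=\widetilde{H}^2_f\big(G_{K,S},\bbT_\chi^\iota;\Delta_\Sigma\big)$ and $N:=\widetilde{H}^2_f\big(G_{K,S},\bbT_\chi^{\ac,\iota};\Delta_\Sigma\big)$.

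First I would check the hypotheses of Proposition \ref{abstract_BSD_formulaeprop}. The vanishing $\widetilde{H}^0_f=\widetilde{H}^3_f=0$ for both $\bbT_\chi^\iota$ and $\bbT_\chi^{\ac,\iota}$ is Lemma \ref{rank_zero_lemma}. That $M$ is $\Lambda(\Gamma_K)$-torsion is Lemma \ref{reformulationofIMClemma}; combining this with the Euler-characteristic identity of Lemma \ref{euler_characteristic} and the vanishing just cited forces $\widetilde{H}^1_f(G_{K,S},\bbT_\chi^\iota;\Delta_\Sigma)$ to have the same $\Lambda(\Gamma_K)$-rank, hence to be torsion as well. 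Non-degeneracy of the cyclotomic height pairing $\frah_{\bbT_\chi^{\ac,\iota}}^{\rm Nek}$ is exactly the assumption ${\rm Reg}(\bbT_\chi^{\rm ac,\iota})\neq 0$ (Definition \ref{descend_defn}(vi)). Finally, Lemma \ref{tamagawa_ac} gives ${\rm Tam}_v(\bbT_\chi^{\ac,\iota},\fraq)=0$ for every $v\in S_f$ prime to $p$ and every height-one prime $\fraq$ of $\Lambda(\Gamma_{\rm ac})$, so the Tamagawa hypothesis is satisfied at all $P\in {\rm ht}_1(R)$.

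Next I would apply the UFD version of Proposition \ref{abstract_BSD_formulaeprop} at an arbitrary $P\in {\rm ht}_1(\Lambda(\Gamma_{\rm ac}))$. Rewriting the left-hand side by \eqref{apequalchar2} and identifying the $R_P$-length term on the right with ${\rm ord}_P\big({\rm char}_{\Lambda(\Gamma_{\rm ac})}(N_{\rm tor})\big)$, one obtains
\[
{\rm ord}_P\Big(\partial_{\rm cyc}{\rm char}_{\Lambda(\Gamma_K)}(M)\Big)={\rm ord}_P\Big({\rm Reg}(\bbT_\chi^{\rm ac,\iota})\Big)+{\rm ord}_P\Big({\rm char}_{\Lambda(\Gamma_{\rm ac})}(N_{\rm tor})\Big)
\]
for every such $P$. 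All three ideals are nonzero and principal: the first because, after dividing out the exact power of $\gamma_{\rm cyc}-1$, the remaining class is not divisible by $\gamma_{\rm cyc}-1$ and so has nonzero image under $\mathbbm{1}_{\rm cyc}$; the second by hypothesis; the third as a characteristic ideal of a torsion module. Since $\Lambda(\Gamma_{\rm ac})$ is a Krull domain, two nonzero principal ideals agreeing in order at every height-one prime coincide, whence
\[
\big(\partial_{\rm cyc}{\rm char}_{\Lambda(\Gamma_K)}(M)\big)={\rm char}_{\Lambda(\Gamma_{\rm ac})}(N_{\rm tor})\cdot{\rm Reg}(\bbT_\chi^{\rm ac,\iota})
\]
as ideals of $\Lambda(\Gamma_{\rm ac})$.

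It then remains to replace $\partial_{\rm cyc}$ by $\partial^*_{\rm cyc}$, i.e.\ to show $r(M)\leq 1$. For this I would invoke the control isomorphism $M\otimes_{\Lambda(\Gamma_K)}\Lambda(\Gamma_{\rm ac})\cong N$, which follows from \cite[Proposition 8.10.13]{Nek06} and the vanishing $\widetilde{H}^3_f(G_{K,S},\bbT_\chi^\iota;\Delta_\Sigma)=0$ (Lemma \ref{rank_zero_lemma}), exactly as in the proof of Proposition \ref{prop_rank}. By Proposition \ref{prop_rank} together with Lemmas \ref{euler_characteristic} and \ref{rank_zero_lemma} one has ${\rm rank}_{\Lambda(\Gamma_{\rm ac})}N\leq 1$; combined with the non-degeneracy of $\frah_{\bbT_\chi^{\ac,\iota}}^{\rm Nek}$, which enters through the Bockstein underlying Nekov\'a\v r's descent and controls the $(\gamma_{\rm cyc}-1)$-primary part of ${\rm char}_{\Lambda(\Gamma_K)}(M)$, this bounds the order $r(M)$ of vanishing of ${\rm char}_{\Lambda(\Gamma_K)}(M)$ along $\gamma_{\rm cyc}-1$ by that rank, so $r(M)\leq 1$. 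Hence $\partial^*_{\rm cyc}{\rm char}_{\Lambda(\Gamma_K)}(M)=\partial_{\rm cyc}{\rm char}_{\Lambda(\Gamma_K)}(M)$, and the last display becomes the asserted identity. The step I expect to be the main obstacle is precisely this last one: establishing, under the running hypotheses (in particular ${\rm Reg}(\bbT_\chi^{\rm ac,\iota})\neq 0$ and Hypothesis \ref{hypothesisonfiniteness}), that ${\rm char}_{\Lambda(\Gamma_K)}(M)$ is divisible by $\gamma_{\rm cyc}-1$ at most once. A secondary point that needs care is checking that the Greenberg local conditions $\Delta_\Sigma$ behave well under the cyclotomic specialization, so that the control isomorphism is legitimate and Nekov\'a\v r's descent applies with $\Delta(X_{\rm cyc})=\Delta_\Sigma$; this is routine given the Tamagawa vanishing of Lemma \ref{tamagawa_ac}.
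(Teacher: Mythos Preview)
Your proposal is correct and follows the same route as the paper: verify the hypotheses of Proposition~\ref{abstract_BSD_formulaeprop} via Lemmas~\ref{rank_zero_lemma}, \ref{tamagawa_ac}, \ref{reformulationofIMClemma}, and \ref{euler_characteristic}, apply it at every height-one prime of $\Lambda(\Gamma_{\rm ac})$, and then upgrade $\partial_{\rm cyc}$ to $\partial^*_{\rm cyc}$ by showing $r(M)\leq 1$.

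For the step you flag as the main obstacle, the paper makes your Bockstein intuition precise by citing \cite[Proposition~11.7.6(vii)]{Nek06}, which gives
\[
r(M)={\rm length}_{\Lambda(\Gamma_K)_{(\gamma-1)}}\bigl(M_{(\gamma-1)}\bigr)\ \geq\ {\rm rank}_{\Lambda(\Gamma_{\rm ac})}\widetilde{H}^1_f\bigl(G_{K,S},\bbT_\chi^{\ac,\iota};\Delta_\Sigma\bigr)
\]
with equality if and only if ${\rm Reg}(\bbT_\chi^{\ac,\iota})\neq 0$; together with Proposition~\ref{prop_rank} and the regulator hypothesis this forces $r(M)\leq 1$. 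Your appeal to the control isomorphism $M/(\gamma_{\rm cyc}-1)M\cong N$ is a slight detour: on its own it only yields ${\rm rank}_{\Lambda(\Gamma_{\rm ac})}N\leq r(M)$, the inequality in the wrong direction, so the non-degeneracy input via 11.7.6 is indeed the essential ingredient, exactly as you suspected.
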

\begin{proof}
      Once we verify that the following properties are satisfied, it follows from Proposition~\ref{abstract_BSD_formulaeprop} that the asserted equality holds.
    \begin{enumerate}[label={(\arabic*)}]
    \item We have that
    \begin{align*}
   \widetilde{H}_{f}^0\left(G_{K,S},\bbT_\chi^{\ac,\iota};\Delta_\Sigma\right)&=\widetilde{H}_{f}^3\left(G_{K,S},\bbT_\chi^{\ac,\iota};\Delta_\Sigma\right)=0\\
    \widetilde{H}_{f}^0\left(G_{K,S},\bbT_\chi^\iota;\Delta_\Sigma\right)&=\widetilde{H}_{f}^3\left(G_{K,S},\bbT_\chi^\iota;\Delta_\Sigma\right)=0
\end{align*}
    \item ${\rm Tam}_v(\bbT_\chi^{\rm ac,\iota},P)=0$ for every height-one prime ideal $P$ of $\Lambda(\Gamma_{\rm ac})$ for all $v\nmid p$ in $S_f$.
    \item The $\Lambda(\Gamma_K)$-modules $ \widetilde{H}_{f}^1\left(G_{K,S},\bbT_\chi^\iota;\Delta_\Sigma\right)$ and $ \widetilde{H}_{f}^2\left(G_{K,S},\bbT_\chi^\iota;\Delta_\Sigma\right)$ are torsion.
    \item If $r\left( \widetilde{H}_{f}^2\left(G_{K,S},\bbT_\chi^\iota;\Delta_\Sigma\right)\right)>1$ then ${\rm Reg}(\bbT_\chi^{\rm ac,\iota})=0$.
\end{enumerate}

Property $(1)$ follows from Lemma~\ref{rank_zero_lemma} and Property $(2)$ is due to Lemma~\ref{tamagawa_ac}. Property $(1)$ and Lemma~\ref{euler_characteristic} imply that $\widetilde{h}^1=\widetilde{h}^2$. Therefore, Lemma \ref{reformulationofIMClemma} shows that the $\Lambda(\Gamma_K)$-modules $ \widetilde{H}_{f}^1\left(G_{K,S},\bbT_\chi^\iota;\Delta_\Sigma\right)$ and $ \widetilde{H}_{f}^2\left(G_{K,S},\bbT_\chi^\iota;\Delta_\Sigma\right)$  are torsion.

\vspace{1.2mm}

We will complete the verification of Property $(4)$ by employing \cite[Proposition 11.7.6(ii)]{Nek06}. Recall that $\gamma_{\rm cyc}\in \Gamma_{\rm cyc}$ is a fixed topological generator, and $\gamma:=\gamma_{\rm cyc}^{(\Gamma_{\rm ac})}\in \Gamma_K$ its lift. Since $\Lambda(\Gamma_{\rm ac})$ is an integral domain, the set denoted by $Q$ in \cite[\S 11]{Nek06} contains only the zero ideal $(0)$ of $\Lambda(\Gamma_{\rm ac})$. In this case, the ideal $(\gamma-1)\subset\Lambda(\Gamma_K)$ corresponds to $\overline{\fraq}$ in the notation of Nekov\'a\v r.

\vspace{1.2mm}

It now follows from \cite[Proposition 11.7.6 (vii)]{Nek06} that
\begin{equation*}
    {\rm length}_{\Lambda(\Gamma_K)_{(\gamma-1)}}\big(\widetilde{H}_{f}^2\left(G_{K,S},\bbT_\chi^\iota;\Delta_\Sigma\right)_{(\gamma-1)} \big) \geq {\rm length}_{\Lambda(\Gamma_{\rm ac})_{(0)}}\big(\widetilde{H}_{f}^1\left(G_{K,S},\bbT_\chi^{\ac,\iota};\Delta_\Sigma\right)_{(0)} \big)
\end{equation*}
with the equality if and only if ${\rm Reg}(\bbT_\chi^{\rm ac,\iota})_{(0)}\neq 0$. The latter is equivalent to asking ${\rm Reg}(\bbT_\chi^{\rm ac,\iota})\neq 0$, since $\Lambda(\Gamma_{\rm ac})$ is an integral domain. Therefore; 
\begin{align*}
    r\left(\widetilde{H}_{f}^2\left(G_{K,S},\bbT_\chi^\iota;\Delta_\Sigma\right)\right)&={\rm length}_{\Lambda(\Gamma_K)_{(\gamma-1)}}\big(\widetilde{H}_{f}^2\left(G_{K,S},\bbT_\chi^\iota;\Delta_\Sigma\right)_{(\gamma-1)} \big)\\
    &\geq {\rm length}_{\Lambda(\Gamma_{\rm ac})_{(0)}}\big(\widetilde{H}_{f}^1\left(G_{K,S},\bbT_\chi^{\ac,\iota};\Delta_\Sigma\right)_{(0)} \big)\\
    &={\rm rank}_{\Lambda(\Gamma_{\rm ac})}\left(\widetilde{H}_{f}^1\left(G_{K,S},\bbT_\chi^{\ac,\iota};\Delta_\Sigma\right)\right)
\end{align*}
with equality if and only if ${\rm Reg}(\bbT_\chi^{\rm ac})\neq 0$. Since ${\rm rank}_{\Lambda(\Gamma_{\rm ac})}\left(\widetilde{H}_{f}^1\left(G_{K,S},\bbT_\chi^{\ac,\iota};\Delta_\Sigma\right)\right)\leq 1$ (cf. Proposition~\ref{prop_rank}) and $r\left(\widetilde{H}_{f}^2\left(G_{K,S},\bbT_\chi^{\ac,\iota};\Delta_\Sigma\right)\right)>1$, the inequality is strict and hence, ${\rm Reg}(\bbT_\chi^{\rm ac,\iota}) = 0$.
\end{proof}



We write the power series expansion
\begin{equation*}
    \ccL_{p,\Sigma}^\chi:= \ccL_0+\ccL_1\cdot(\gamma_\cyc-1)+\ccL_2\cdot(\gamma_\cyc-1)^2+\dots \in \Lambda_{\ccW}(\Gamma_{\rm ac}) \widehat{\otimes} \Lambda_{\ccW}(\Gamma_{\rm cyc})
\end{equation*}
where $\ccL_i$ are elements of $\Lambda_\ccW(\Gamma_\ac)$.

\begin{theorem}\label{anticycmainconjecture} ---
    Assume the hypotheses of Theorem \ref{hsiehstheorem} as well as Hypothesis \ref{hypothesisonfiniteness}. If $\varphi\overline{\varphi}=\bbN^{k-1}$,  ${\rm Reg}(\bbT_\chi^{\rm ac,\iota})\neq 0$ and the sign in the functional equation of $f$ is $-1$, then we have
    \begin{equation*}
    {\rm char}_{\Lambda_{\ccW}(\Gamma_{\rm ac})}\Big(\widetilde{H}_{f}^2\left(G_{K,S},\bbT_\chi^{\ac,\iota};\Delta_\Sigma\right)_{\rm tor} \Big)\cdot{\rm Reg}(\bbT_\chi^{\rm ac,\iota})=\left(\ccL_1\right).
\end{equation*}
\end{theorem}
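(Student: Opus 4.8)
The plan is to combine the algebraic identity of Theorem \ref{main_theoremB} with Hsieh's divisibility (Theorem \ref{hsiehstheorem}), the reformulation of the Iwasawa Main Conjecture in terms of Selmer complexes (Lemma \ref{reformulationofIMClemma}), and the functional equation of the Katz $p$-adic $L$-function. First I would invoke Theorem \ref{main_theoremB}, whose hypotheses (Hypothesis \ref{hypothesisonfiniteness} and ${\rm Reg}(\bbT_\chi^{\rm ac,\iota})\neq 0$) are among those assumed here, to obtain
\begin{equation*}
  {\rm char}_{\Lambda_\ccW(\Gamma_{\rm ac})}\Big(\widetilde{H}_{f}^2\left(G_{K,S},\bbT_\chi^{\ac,\iota};\Delta_\Sigma\right)_{\rm tor} \Big)\cdot{\rm Reg}\left(\bbT_\chi^{\rm ac,\iota}\right)= \partial^*_{\rm cyc}{\rm char}_{\Lambda_\ccW(\Gamma_K)}\Big(\widetilde{H}_{f}^2\left(G_{K,S},\bbT_\chi^\iota;\Delta_\Sigma\right) \Big).
\end{equation*}
So the whole problem reduces to identifying the mock leading term on the right with $(\ccL_1)$.

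Next I would analyze the right-hand side. By Lemma \ref{reformulationofIMClemma} we have ${\rm char}_{\Lambda_\ccW(\Gamma_K)}\big(\widetilde{H}_{f}^2(G_{K,S},\bbT_\chi^\iota;\Delta_\Sigma)\big)={\rm char}_{\Lambda_\ccW(\Gamma_K)}(X_\Sigma^\chi)$, and by Hsieh's theorem (whose hypotheses are assumed) this characteristic ideal equals $(\ccL_{p,\Sigma}^\chi)$. Thus $\partial^*_{\rm cyc}{\rm char}_{\Lambda_\ccW(\Gamma_K)}\big(\widetilde{H}_{f}^2(G_{K,S},\bbT_\chi^\iota;\Delta_\Sigma)\big)=\partial^*_{\rm cyc}(\ccL_{p,\Sigma}^\chi)$, which by the definition of the mock leading term is $(\ccL_0)$ if $\mathbbm{1}_{\rm cyc}(\ccL_{p,\Sigma}^\chi)=\ccL_0\neq 0$ and $(\ccL_1)$ if $\ccL_0=0$. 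Hence the assertion amounts to showing that $\ccL_0=0$, i.e. that the Katz $p$-adic $L$-function vanishes identically on the anticyclotomic hyperplane $\gamma_{\rm cyc}=1$. This is exactly the point where the hypotheses $\varphi\overline{\varphi}=\bbN^{k-1}$ (self-duality of the central twist) and "sign $=-1$" enter: the functional equation of $\ccL_{p,\Sigma}^\chi$ relates its value along the anticyclotomic direction to itself up to the global root number $W(\varphi)=-1$, forcing $\mathbbm{1}_{\rm cyc}(\ccL_{p,\Sigma}^\chi)=\ccL_0=0$. I would cite the functional equation for the Katz--Hida--Tilouine $p$-adic $L$-function (e.g. the version in \cite{ht93} or \cite{hsi14}, or as used in \cite{AH06,AR07}) to make this precise; since $\ccL_0$ is, up to an interpolation factor, the special value attached to the CM Hilbert form $f$ whose complex $L$-function has sign $-1$, it vanishes.

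Finally, I would assemble the pieces: $\ccL_0=0$ gives $\partial^*_{\rm cyc}(\ccL_{p,\Sigma}^\chi)=(\ccL_1)$, and combining with the displayed identity from Theorem \ref{main_theoremB} yields
\begin{equation*}
  {\rm char}_{\Lambda_\ccW(\Gamma_{\rm ac})}\Big(\widetilde{H}_{f}^2\left(G_{K,S},\bbT_\chi^{\ac,\iota};\Delta_\Sigma\right)_{\rm tor} \Big)\cdot{\rm Reg}(\bbT_\chi^{\rm ac,\iota})=(\ccL_1),
\end{equation*}
as desired. One should also note that the condition ${\rm Reg}(\bbT_\chi^{\rm ac,\iota})\neq 0$ guarantees (via Property (4) in the proof of Theorem \ref{main_theoremB}) that $r\big(\widetilde{H}_{f}^2(G_{K,S},\bbT_\chi^\iota;\Delta_\Sigma)\big)\leq 1$, so that $\partial^*_{\rm cyc}$ coincides with $\partial_{\rm cyc}$ and no information is lost; this consistency check should be mentioned.

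The main obstacle I anticipate is the functional-equation input establishing $\ccL_0=0$: one must match the sign of the complex functional equation of $f$ with the sign appearing in the functional equation of the Katz $p$-adic $L$-function along the cyclotomic variable, correctly tracking the self-dual twist $\varphi\overline{\varphi}=\bbN^{k-1}$ and the precise form of the interpolation property, so that the vanishing at $\gamma_{\rm cyc}=1$ genuinely reflects the vanishing of the central critical $L$-value (sign $-1$) rather than an artifact of interpolation factors. The rest of the argument is essentially a bookkeeping combination of results already in place.
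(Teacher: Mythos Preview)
Your proposal is correct and follows essentially the same route as the paper: invoke Theorem~\ref{main_theoremB}, identify ${\rm char}_{\Lambda_\ccW(\Gamma_K)}\big(\widetilde{H}_{f}^2(G_{K,S},\bbT_\chi^\iota;\Delta_\Sigma)\big)$ with $(\ccL_{p,\Sigma}^\chi)$ via Lemma~\ref{reformulationofIMClemma} and Theorem~\ref{hsiehstheorem}, and then use the root number $-1$ together with the functional equation of the (anticyclotomic) Katz $p$-adic $L$-function to force $\ccL_0=\mathbbm{1}_{\rm cyc}(\ccL_{p,\Sigma}^\chi)=0$, so that $\partial^*_{\rm cyc}(\ccL_{p,\Sigma}^\chi)=(\ccL_1)$. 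The paper's proof is terser (it folds the Hsieh/Lemma~\ref{reformulationofIMClemma} step into the citation of Theorem~\ref{main_theoremB}), but the logic is the same.
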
 
\begin{proof}
It follows from Theorem~\ref{main_theoremB} that 
\begin{equation*}
    {\rm char}_{\Lambda(\Gamma_{\rm ac})}\Big(\widetilde{H}_{f}^2\left(G_{K,S},\bbT_\chi^{\ac,\iota};\Delta_\Sigma\right)_{\rm tor} \Big)  \cdot{\rm Reg}\left(\bbT_\chi^{\rm ac,\iota}\right)=\left(\partial^*_{\rm cyc}\ccL_{p,\Sigma}^\chi\right).
\end{equation*}
In view of
the functional equation of the Hecke $L$-function, the root number condition forces all the Hecke $L$-values appearing in the interpolation property of $\mathbbm 1_\cyc(\ccL_{p,\Sigma}^\chi)$ to vanish. Accordingly, $\mathbbm 1_\cyc(\ccL_{p,\Sigma}^\chi)$ vanishes. This also follows from the functional equation of the anticyclotomic Katz $p$-adic $L$-function (cf. \cite[\S 5]{ht93}).
\end{proof}
\begin{remark} ---
    One anticipates that the height pairing $\frah_{X}^{\rm Nek}$ is always non-degenerate (i.e. ${\rm Reg}\left(\bbT_\chi^{\rm ac,\iota}\right)\neq 0$); see \cite{Bur15,BD20} for progress in this direction.
\end{remark}

\section{Hecke characters attached to abelian varieties with complex multiplication}\label{heckecharofabelianvarieties}

Let $L$ be a finite abelian extension of the CM field $K$ with $\Delta:=\gal(L/K)$.  Fix an odd rational prime $p$ that is not ramified in $L/\bbQ$ and not dividing $|\Delta|$. 
Recall that $[K:\bbQ]=2g$.

\vspace{1.2mm}

Let $A$ be a polarized simple abelian variety of dimension $g$ defined over $L$ with CM-type $(K,\Sigma)$, i.e. there exists $\iota:K\xrightarrow{\;\sim\;} {\rm End }(A)_\bbQ$. Assume that $A$ has good ordinary reduction at $p$. 
We have the $p$-adic Tate module $T_p(A):=\varprojlim_nA[p^n]$ of A, which is a free $\bbZ_p$-module of rank $2g$ on which $G_L$ acts continuously. For the Tate module $T_p(A)$, we have a decomposition 
\begin{equation*}
    T_p(A):=\bigoplus_{\frap\mid p}T_\frap(A),
\end{equation*}
where the product is over the prime ideals of $K$ lying above $p$ and each $T_\frap(A)=\varprojlim A[\frap^n]$ is a free $\ccO_{K_\frap}$-module of rank one, see \cite[Page 502]{JT68}. The  action of $G_L$ on $T_\frap(A)$ gives rise to a character
\begin{equation*}
    \rho_\frap:G_L \longrightarrow \ccO_{K_\frap}^\times.
\end{equation*}


The theory of complex multiplication gives rise to an algebraic Hecke character $\rho: \bbAt_L\longrightarrow K^{\times}$ attached to $A$ such that, for every prime $\frap$ of $K$ lying above $p$, the $\frap$-adic avatar $\rho_{\frap}:G_L \longrightarrow \ccO_{K_\frap}^\times$ gives the action of $G_L$ on $T_\frap(A)$. We assume the following hypothesis:

     \begin{equation}
         \tag{\textbf{Shi.}} \text{ The field } L(A_{\rm tor}) \text{ is an abelian extension of } K.  \label{itm:h.shi} 
     \end{equation}
       
It follows from \cite[Theorem 7.44]{shi71} that the assumption \eqref{itm:h.shi} is equivalent to the existence of a Hecke character $\varphi$ of $K$ such that
\begin{equation}
    \rho=\varphi\circ N_{L/K}.
\end{equation}

Recall that we fixed embeddings $\iota_\infty:\overline{\bbQ}\hookrightarrow\bbC$ and $\iota_p:\overline{\bbQ}\hookrightarrow\bbC_p$ and an isomorphism $\iota:\bbC\cong \bbC_p$ such that $\iota\circ \iota_\infty=\iota_p$ and that $\Sigma$ is the set that contains precisely half of the embeddings of $K$ into $\overline{\bbQ}$. Since we are under the assumption \eqref{eqn:ordinaryassumption}, the theory of complex multiplication identifies the two sets $\{\rho_\wp^\sigma\}_{\wp,\sigma}$ and $\{\rho_\tau^{(p)}\}_\tau$, where we have
\begin{equation*}
    \rho_\wp^\sigma: \bbAt_L\big/L^\times \xrightarrow[]{\rho_\wp\;\circ\;{\rm rec}} \ccO_\wp^\times \xhookrightarrow{\;\sigma\;}\overline{\bbQ}^\times_p\subset \bbC_p^\times,
\end{equation*}
and
\begin{equation*}
    \rho_\tau^{(p)}:\bbAt_L\big/L^\times \xrightarrow[]{\;\;\rho_\tau\;\;} \bbC^\times \stackrel{\iota}{\cong}\bbC_p^\times,
\end{equation*}
where ${\rm rec}: \bbAt_L\big/L^\times\longrightarrow G_L$ denotes the reciprocity map.

\vspace{3 mm}

Fix $\tau\in\Sigma$ and identify $K$ with $K^\tau$. This choice in turn fixes a prime $\wp\in \Sigma_p$ and $\sigma:K_\wp\hookrightarrow\overline{\bbQ}_p$ in a way that $\rho_\tau^{(p)}=\rho_\wp^\sigma$. Set $\frakO:=\sigma(\ccO_\wp)$. Let $\frap=\wp^\sigma$ denote its maximal ideal, $\frakF={\rm Frak}(\frakO)$ its fraction field. By abuse of notation, we define
\begin{equation*}
    \psi:=\rho_\wp^\sigma:G_L \xtwoheadrightarrow{\;\;} \frakO^\times.
\end{equation*}
 
 \begin{definition} ---
Define the $G_L$-representation $T_{\chi,L}=\frakO(1)\otimes\chi^{-1}$, where we write $\chi=\omega_\psi=\psi\cdot\langle\psi\rangle^{-1}:G_L\longrightarrow \frakO^\times$ similar to \S~\ref{padicheckechar}. 
 \end{definition}

 We then obtain a $G_K$-representation by setting
\begin{equation*}
    \ind_L^K(T_{\chi,L}):=\frakO[G_K]\otimes_{G_L}T_{\chi,L}\cong \frakO[\Delta]\otimes_{\frakO}T_{\chi,L}.
\end{equation*}
The following lemma will provide us the relation between the $\frakO$-modules $\ind_L^K(T_{\chi,L})$ and twists $T_\chi\otimes\eta$ of $T_{\chi,L}$ by $\eta\in \widehat{\Delta}$.
 
\begin{lemma} ---
The map $f: \left(\frakO[G_K]\otimes_{\frakO[G_L]}T_{\chi,L}\right)^\eta \longrightarrow T_\chi\otimes\eta$ given by $\sigma \otimes t \mapsto \eta(\sigma)t$ is an isomorphism between $\frakO[G_K]$-modules.
\end{lemma}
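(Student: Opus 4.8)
The plan is to recognize this lemma as an instance of the projection formula for induced representations, combined with the isotypic decomposition of the group ring $\frakO[\Delta]$. The first thing I would do is make precise the meaning of both sides. Because of hypothesis~\eqref{itm:h.shi}, which yields $\rho=\varphi\circ N_{L/K}$, the character $\psi=\rho_\wp^\sigma$ of $G_L$ is the restriction of the $\frap$-adic avatar $\varphi_\frap\colon G_K\to\frakO^\times$ of $\varphi$; hence $\chi=\omega_\psi=\omega_{\varphi_\frap}|_{G_L}$ extends to the $G_K$-character $\omega_{\varphi_\frap}$, and the $G_L$-module $T_{\chi,L}=\frakO(1)\otimes\chi^{-1}$ is precisely $\res_{G_L}^{G_K}T_\chi$ for the $G_K$-representation $T_\chi:=\frakO(1)\otimes\omega_{\varphi_\frap}^{-1}$. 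Since $G_L\triangleleft G_K$ and this character is conjugation-invariant under $G_K$, the module $\frakO[G_K]\otimes_{\frakO[G_L]}T_{\chi,L}=\ind_L^K\res_L^K T_\chi$ carries a natural action of $\Delta=\gal(L/K)$ commuting with the $G_K$-action, and the superscript $(-)^\eta$ denotes its $\eta$-isotypic component; moreover the displayed formula $\sigma\otimes t\mapsto\eta(\sigma)t$ is to be read with the $G_K$-action on $T_\chi$ implicit, i.e.\ as $\sigma\otimes t\mapsto\eta(\sigma)\cdot\sigma(t)$.

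\textbf{Main steps.} With this in hand, I would invoke the projection-formula isomorphism of $\frakO[G_K]$-modules, equivariant also for $\Delta$,
\[\ind_L^K\res_L^K T_\chi\;\xrightarrow{\ \sim\ }\;T_\chi\otimes_\frakO\frakO[\Delta],\qquad \sigma\otimes t\longmapsto\sigma(t)\otimes\bar\sigma,\]
whose well-definedness over $\frakO[G_L]$ and $G_K$-equivariance are a one-line check and whose inverse sends $t\otimes\bar\sigma$ to $\sigma\otimes\sigma^{-1}(t)$ (independent of the chosen lift $\sigma$ of $\bar\sigma$). Next, since $p\nmid|\Delta|$ the integer $|\Delta|$ is a unit of $\frakO$, so — enlarging $\frakO$ if necessary so that it contains the values of all $\eta\in\widehat\Delta$ — the orthogonal idempotents $e_\eta=|\Delta|^{-1}\sum_{\delta\in\Delta}\eta(\delta)^{-1}\delta$ give $\frakO[\Delta]=\bigoplus_{\eta\in\widehat\Delta}\frakO[\Delta]e_\eta$ with each $\frakO[\Delta]e_\eta$ free of rank one and $\Delta$ acting on it through $\eta$, by the elementary identity $\bar\sigma\,e_\eta=\eta(\bar\sigma)e_\eta$. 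Applying $e_\eta$ to the displayed isomorphism then yields $\big(\frakO[G_K]\otimes_{\frakO[G_L]}T_{\chi,L}\big)^\eta\xrightarrow{\ \sim\ }T_\chi\otimes_\frakO\frakO[\Delta]e_\eta=T_\chi\otimes\eta$, and tracing $\sigma\otimes t$ through the composite gives $\sigma(t)\otimes\bar\sigma\mapsto\eta(\bar\sigma)\,\sigma(t)\otimes e_\eta\leftrightarrow\eta(\sigma)\cdot\sigma(t)$, which is exactly the map $f$.

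\textbf{Alternative and main obstacle.} Alternatively one can work with $f$ directly: the only tensor relations to check are $f(\sigma h\otimes t)=f(\sigma\otimes ht)$ for $h\in G_L$, which holds because $\eta$ is trivial on $G_L$ and the $G_K$-action on $T_\chi$ restricts to the $G_L$-action on $T_{\chi,L}$; $G_K$-equivariance of $f$ is immediate; and $f$ is an isomorphism for dimension reasons, since source and target are both free $\frakO$-modules of rank one and $f$ is surjective because $f\big(e_\eta\cdot(1\otimes t)\big)=t$ up to the unit $|\Delta|$. I expect the only genuinely delicate point to be the bookkeeping isolated in the first paragraph: pinning down which $\Delta$-action defines $(-)^\eta$ and observing that the bare formula $\eta(\sigma)t$ tacitly carries the Galois action on $T_\chi$ — which is exactly where the CM hypothesis~\eqref{itm:h.shi} (equivalently $\rho=\varphi\circ N_{L/K}$) enters — together with the appeal to $p\nmid|\Delta|$ needed to split $\frakO[\Delta]$; everything after that is formal.
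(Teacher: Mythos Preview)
Your proposal is correct, and your alternative paragraph is essentially the paper's own proof, only stated more carefully: the paper simply observes that both sides are free $\frakO$-modules of rank one, that $f$ is non-trivial, and then checks $G_K$-equivariance directly by the one-line computation $f(g\cdot(g'\otimes t))=\eta(gg')t=\eta(g)\cdot f(g'\otimes t)$. Your main route via the projection formula $\ind_L^K\res_L^K T_\chi\cong T_\chi\otimes_\frakO\frakO[\Delta]$ followed by the idempotent decomposition of $\frakO[\Delta]$ is a genuinely more conceptual argument; it has the advantage of making transparent why $p\nmid|\Delta|$ and hypothesis~\eqref{itm:h.shi} are needed, and it simultaneously yields the full decomposition $\ind_L^K T_{\chi,L}\cong\bigoplus_{\eta}T_{\chi_\eta}$ that the paper states immediately after the lemma. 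The paper's direct check is shorter but, as you noticed, tacitly suppresses the $G_K$-action on $T_\chi$ in the formula for $f$ and does not verify well-definedness over $\frakO[G_L]$; your write-up addresses both points explicitly.
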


\begin{proof}
Note that the map $f$ is non-trivial and both $\frakO$-modules are of rank $1$. Therefore, it suffices to check whether the morphism $f$ is $G_K$-equivariant. Let $g\in G_K$ be given. Then for $g'\in G_K$ and $t\in \frakO$, we have
\begin{align*}
    f(g\cdot(g'\otimes t))&= f(g'g\otimes t)\\
                          &= \eta(g'g)t\\
                           &= \eta(g)\eta(g')t\\
                          &= g f(g'\otimes t),
\end{align*}
which completes the proof.
\end{proof}

Fix a character $\eta\in \widehat{\Delta}$. Write $\chi_\eta=\chi\cdot \eta^{-1}$ so that we have  the $G_K$-module $T_{\chi_\eta}=\frakO(1)\otimes\chi_\eta^{-1}=T_{\chi,L}\otimes\eta$. Hence, since $p\nmid|\Delta|$ we deduce that by Shapiro's lemma,
\begin{equation*}
    \ind_L^K(T_{\chi,L})\cong {\displaystyle\bigoplus_{\eta\in \widehat{\Delta}}}T_{\chi_\eta} \text{ and } H^1(L,T_{\chi,L})\cong{\displaystyle\bigoplus_{\eta\in \widehat{\Delta}}}H^1(K,T_{\chi_\eta}).
\end{equation*}

Let $\fraf$ be the product of the ideals ${\rm Cond}(L/K)$ and $\fraf_{\psi}$ of $K$. Let $L_\infty:=LK_\infty$ denote the maximal $\bbZ_p$-power extension $L$ in $K(\fraf p^\infty)$. Note that $\Lambda(\Gamma_L) = \frakO[[\Gamma_L ]]$ with $\Gamma_L:=\gal(L_\infty/L)$ is also isomorphic to the formal power series ring $\Lambda:=\frakO[[X_1,\dots, X_{g+1}]]$.

\begin{lemma}\label{selmergp_isom} ---
   We have the following isomorphism between the modified Selmer groups
    \begin{equation*}
        H^1_{\ccF_\Sigma}\left(L, T_{\chi,L}\otimes\Lambda(\Gamma_L)\right)\cong \displaystyle{\bigoplus_{\eta\in\widehat{\Delta}}} H^1_{\ccF_\Sigma}\left(K,T_{\chi_\eta}\otimes\Lambda(\Gamma_K)\right).
    \end{equation*}
\end{lemma}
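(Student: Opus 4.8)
The plan is to deduce this Selmer-group isomorphism from the Shapiro-type identification already established for the bare Galois cohomology, together with a compatibility check of the local conditions that define the $\ccF_\Sigma$-modified Selmer structure. First I would record that $T_{\chi,L}\otimes\Lambda(\Gamma_L)$, viewed as a $G_K$-module via induction, decomposes as $\bigoplus_{\eta\in\widehat\Delta}\bigl(T_{\chi_\eta}\otimes\Lambda(\Gamma_K)\bigr)$: indeed $\Gamma_L=\gal(L_\infty/L)$ is identified with $\Gamma_K=\gal(K_\infty/K)$ since $L_\infty=LK_\infty$ and $L/K$ is of degree prime to $p$, so $\Lambda(\Gamma_L)\cong\Lambda(\Gamma_K)$ as Iwasawa algebras, and then $\ind_L^K(T_{\chi,L})\cong\bigoplus_\eta T_{\chi_\eta}$ gives $\ind_L^K\bigl(T_{\chi,L}\otimes\Lambda(\Gamma_L)\bigr)\cong\bigoplus_\eta\bigl(T_{\chi_\eta}\otimes\Lambda(\Gamma_K)\bigr)$ as $\Lambda(\Gamma_K)[[G_K]]$-modules by the Lemma just proved, applied $\Lambda$-adically. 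Invoking Shapiro's lemma at the level of continuous cochain complexes then yields $H^1\bigl(G_{L,S},T_{\chi,L}\otimes\Lambda(\Gamma_L)\bigr)\cong\bigoplus_\eta H^1\bigl(G_{K,S},T_{\chi_\eta}\otimes\Lambda(\Gamma_K)\bigr)$, and the same over each completion $L_w$ for $w\mid v$.

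Next I would verify that under this Shapiro isomorphism the local conditions match summand by summand. For a place $v\nmid p$ of $K$, the condition $H^1_{\ccF_\Sigma}=H^1_{\ccF_{\can}}$ is the kernel of restriction to the maximal unramified extension tensored with $\frakF$; since $p\nmid[L:K]$ the induced representation is unramified at $v$ exactly when each $T_{\chi_\eta}$ is, and Shapiro's isomorphism is compatible with the inflation-restriction maps that cut out the unramified (finite part) submodule, so the canonical condition on the $L$-side corresponds to the direct sum of canonical conditions on the $K$-side; at $v\mid\infty$ the condition is everything and there is nothing to check. At $v\mid p$ the $\ccF_\Sigma$-condition is the ``$\Sigma^c$-part'' $\bigoplus_{\fraq\in\Sigma^c}H^1(K_\fraq,-)$ inside $\bigoplus_{\fraq\mid p}H^1(K_\fraq,-)$; here one uses that the primes of $L$ above $p$ are partitioned according to which prime of $K$ they lie over (compatibly with the $p$-adic CM type $\Sigma$, since $p\nmid[L:K]$), so the local Shapiro isomorphism $H^1(L_p,T_{\chi,L}\otimes\Lambda(\Gamma_L))\cong\bigoplus_\eta H^1(K_p,T_{\chi_\eta}\otimes\Lambda(\Gamma_K))$ carries the $\Sigma^c$-part to the direct sum of $\Sigma^c$-parts. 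Taking kernels of the global-to-local maps then gives the claimed isomorphism of Selmer modules.

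The main obstacle I expect is the bookkeeping at $p$: one must be careful that the decomposition $T_p(A)=\bigoplus_{\frap\mid p}T_\frap(A)$ and the choice of CM type $\Sigma$ for $K$ are consistently transported to $L$, i.e. that a prime $\fraq$ of $K$ lies in $\Sigma^c$ if and only if each prime of $L$ above it does, and that the decomposition group data $D_w\subset D_v$ used to define the local cochain complexes is the one Shapiro's lemma naturally produces. This is where the hypothesis $p\nmid|\Delta|$ is essential (it guarantees $e_\eta$-projectors exist integrally and that no wild ramification in $L/K$ at $p$ disturbs the identification), and it is the step that requires the most care rather than cleverness; the rest is a formal consequence of Shapiro's lemma and the definition of the modified Selmer structure in Definition~\ref{local_cond_selm_st}.
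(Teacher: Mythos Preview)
Your proposal is correct and follows essentially the same approach as the paper, which simply records that ``the result follows immediately from the comparison of the local conditions.'' You have spelled out precisely that comparison (Shapiro's lemma globally, plus the verification that the canonical condition at $v\nmid p$ and the $\Sigma^c$-condition at $v\mid p$ are carried summand-by-summand under the induced decomposition), so your argument is a faithful elaboration of the paper's one-line proof.
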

\begin{proof}
   The result follows immediately from the comparison of the local conditions.
\end{proof}

\begin{theorem}\label{maintheorem} ---
Assume that the hypotheses of Theorem \ref{hsiehstheorem} along with \eqref{itm:h.shi}. Then, for each $\eta\in\widehat{\Delta}$, the module $H^1_{\ccF_\Sigma^*}(K,\bbT_{\chi_\eta}^\vee(1))$ is $\Lambda$-cotorsion and we have

\begin{align*}
        {\rm char}_{\Lambda_\ccW}\left( H^1_{\ccF_\Sigma^*}\left(L,\bbT_{\chi,L}^\vee(1)\right)^\vee  \right)&={\displaystyle\prod_{\eta\in \widehat{\Delta}}} {\rm char}_{\Lambda_\ccW}\left( H^1_{\ccF_\Sigma^*}(K,\bbT_{\chi_\eta}^\vee(1))^\vee \right)\\
    &={\displaystyle\prod_{\eta\in \widehat{\Delta}}} \left(\ccL_{p,\Sigma}^{\chi_\eta}\right) .
\end{align*}
\end{theorem}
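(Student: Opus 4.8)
The plan is to descend from $L$ to $K$ via the Shapiro-type decomposition of this section and then invoke Hsieh's main conjecture (Theorem~\ref{hsiehstheorem}) for each branch character $\chi_\eta=\chi\cdot\eta^{-1}$, translating everything through the Selmer-complex reformulation of Section~\ref{selmerstructuresandselmercomplexes}. First I would record that $\Gamma_L\cong\Gamma_K$: since $p\nmid|\Delta|$ and $K_\infty/K$ is pro-$p$, the extensions $L$ and $K_\infty$ are linearly disjoint over $K$, so $\Lambda(\Gamma_L)\cong\Lambda(\Gamma_K)=\Lambda$ (and likewise after $\otimes_{\frakO}\ccW$); thus both sides of the asserted identity live over the same ring. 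Next, by the Shapiro-type argument of Lemma~\ref{selmergp_isom}, now applied to $\bbT_{\chi,L}^\vee(1)$ with the dual Selmer structure $\ccF_\Sigma^*$ --- equivalently, using $\ind_L^K(T_{\chi,L})\cong\bigoplus_{\eta\in\widehat\Delta}T_{\chi_\eta}$ together with the compatibility of induction with Pontryagin duality and the Tate twist (valid since $p\nmid|\Delta|$) --- one obtains
\[
H^1_{\ccF_\Sigma^*}\!\left(L,\bbT_{\chi,L}^\vee(1)\right)^\vee\;\cong\;\bigoplus_{\eta\in\widehat\Delta}H^1_{\ccF_\Sigma^*}\!\left(K,\bbT_{\chi_\eta}^\vee(1)\right)^\vee .
\]
As the sum is finite, the left-hand module is $\Lambda$-torsion if and only if each summand is, and then its characteristic ideal is the product of those of the summands; this reduces the theorem to proving, for each $\eta$, that $H^1_{\ccF_\Sigma^*}(K,\bbT_{\chi_\eta}^\vee(1))^\vee$ is $\Lambda$-torsion with characteristic ideal $(\ccL_{p,\Sigma}^{\chi_\eta})$.

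To prove the latter I would fix $\eta$ and verify that $\chi_\eta$ still satisfies hypotheses (i)--(v) of Theorem~\ref{hsiehstheorem} together with \eqref{eqn:(1.1)} and \eqref{(1.2)}. The point is that $\eta$ is unramified at $p$ (because $p$ is unramified in $L/\bbQ$ and $p\nmid|\Delta|$) and of order prime to $p$; hence $\chi_\eta$ has the same ramification as $\chi$ along $\Sigma$ and $\Sigma^c$, its order divides the least common multiple of the orders of $\chi$ and $\eta$ and is prime to $p$, the conditions $\chi_\eta(\frap)\neq1$ and the nontriviality of the restriction of $\chi_\eta$ to $G_{K(\sqrt{p^*})}$ persist, and $\chi_\eta$ remains anticyclotomic because $\eta$ is (this is where one uses that $L/K$ is anticyclotomic, so that every $\eta\in\widehat\Delta$ is). Theorem~\ref{hsiehstheorem} then gives Conjecture~\ref{IMCforCM} for $\chi_\eta$, i.e.\ $\Char_{\Lambda_\ccW(\Gamma_K)}(X_\Sigma^{\chi_\eta})=(\ccL_{p,\Sigma}^{\chi_\eta})$, while \cite[Theorem~1.2.2]{ht94} makes $X_\Sigma^{\chi_\eta}$ a torsion $\Lambda$-module. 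Finally, Lemma~\ref{reformulationofIMClemma} (via Lemma~\ref{isom_selmer_groups}) identifies $\Char(X_\Sigma^{\chi_\eta})$ with the characteristic ideal of $(H^1_{\ccF_\Sigma^*}(K,\bbT_{\chi_\eta}^\vee(1))^\iota)^\vee$, which agrees with $\Char(H^1_{\ccF_\Sigma^*}(K,\bbT_{\chi_\eta}^\vee(1))^\vee)$ up to the involution $\iota$ of $\Lambda$; the functional equation of the Katz $p$-adic $L$-function (cf.\ \cite[\S5]{ht93}) reconciles this twist, yielding the desired equality, and summing over $\eta$ gives the two displayed identities.

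The step I expect to be the main obstacle is the verification that every twist $\chi_\eta$, $\eta\in\widehat\Delta$, genuinely meets all the hypotheses of Theorem~\ref{hsiehstheorem}: the conditions that are local at $p$ or about orders transfer easily from the fact that $\eta$ is unramified at $p$ of prime-to-$p$ order, but the anticyclotomic condition constrains $L/K$, and the non-vanishing $\chi_\eta(\frap)\neq1$ for $\frap\in\Sigma$ must be checked for each $\eta$. A secondary technical point is the careful bookkeeping of the involution $\iota$ and of the identifications $\Lambda(\Gamma_L)\cong\Lambda(\Gamma_K)$ and of $\widetilde H^2_f(G_{K,S},\bbT_{\chi_\eta};\Delta_\Sigma)$ with the $\iota$-twist of $\widetilde H^2_f(G_{K,S},\bbT_{\chi_\eta}^\iota;\Delta_\Sigma)$, handled again via the functional equation of the Katz $L$-function.
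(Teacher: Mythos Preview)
Your approach is essentially the same as the paper's: decompose via the Shapiro-type isomorphism of Lemma~\ref{selmergp_isom} (applied on the dual side) and then invoke Theorem~\ref{hsiehstheorem} for each $\chi_\eta$. The paper's proof is a one-line citation of exactly these two inputs, so your plan is not just aligned with it but considerably more explicit.

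Two remarks on the extra work you are doing. First, the paper does not attempt to \emph{deduce} the hypotheses of Theorem~\ref{hsiehstheorem} for each $\chi_\eta$ from those for $\chi$; it simply assumes them (the phrase ``the hypotheses of Theorem~\ref{hsiehstheorem}'' is to be read as holding for every branch character in play). Your observation that the anticyclotomic condition~(iii) forces a constraint on $L/K$, and that $\chi_\eta(\frap)\neq1$ is not automatic from $\chi(\frap)\neq1$, is correct --- but in the paper's logic these are part of the standing hypotheses rather than something to be verified. Second, your $\iota$-bookkeeping and appeal to the functional equation are more than the paper spells out: the paper's Lemma~\ref{reformulationofIMClemma} already packages the identification $\Char\big((H^1_{\ccF_\Sigma^*}(K,\bbT_{\chi_\eta}^\vee(1))^\iota)^\vee\big)=\Char(X_\Sigma^{\chi_\eta})$, and the passage from the $\iota$-twisted version to the untwisted one stated in Theorem~\ref{maintheorem} is left implicit. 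Your route through the functional equation of the Katz $p$-adic $L$-function is a legitimate way to close that gap.
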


\begin{proof}
   The assertion follows immediately from Theorem~\ref{hsiehstheorem} and Lemma~\ref{selmergp_isom}.
\end{proof}

\vspace{1.2mm}

\begin{definition} ---
    For $X=L$ or $L_\infty$, define the $\frap$-relaxed Selmer group $\Sel'_{\frap}(A/X)$ attached to $A$ by setting
    \begin{equation*}
        \Sel'_\frap(A/X):=\ker\left(H^1(X,A[\frap^\infty])\longrightarrow {\displaystyle\prod_{v\nmid \frap}}\dfrac{H^1(X_v,A[\frap^\infty])}{\im\left(A(X)\otimes \frakF/\frakO \xhookrightarrow{\kappa_v} H^1(X_v,A[\frap^\infty])\right)} \right),
    \end{equation*}
where $\kappa_v$ denotes the Kummer map.
\end{definition}

For $\frap$ as above, we assume the following non-anomaly condition on $A$:
\begin{equation}\label{non-anomaly_cond}
    A(L_v)[\frap]=0 \text{ for every prime } v \text{ of } L \text{ above } p.
\end{equation}


\begin{prop}\label{control_theorem} --- We have the following inclusions:
\begin{enumerate}[label=(\roman*)]
    \item $ \Sel'_\frap(A/L) \xhookrightarrow{\;\;\;}  \Sel'_\frap(A/L_\infty)^{\Gamma_L}$.
    \item $ \Sel'_\frap(A/L_\infty) \subset  H^1_{\ccF_\Sigma^*}\left(L,\left(T_{\chi,L}\otimes\Lambda(\Gamma_L)\right)^\vee(1)\right) $.
\end{enumerate}
\end{prop}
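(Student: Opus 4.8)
The plan is to prove each inclusion by comparing the defining local conditions of the $\frap$-relaxed Selmer groups with those of the modified Selmer group $H^1_{\ccF_\Sigma^*}$, exploiting the non-anomaly hypothesis \eqref{non-anomaly_cond} and a standard control-theory argument. Throughout I would use that $T_\frap(A)$ is, as a $G_L$-module, a twist of $T_{\chi,L}$ by an unramified character (namely $\langle\psi\rangle$), so that $A[\frap^\infty] \cong (T_{\chi,L})^\vee(1) \otimes (\text{an unramified twist})$ after identifying $\frakF/\frakO$-coefficients appropriately, and that taking the $\Lambda(\Gamma_L)$-extension corresponds on the dual side to passing to $L_\infty$; this is the bridge that lets me phrase $\Sel'_\frap(A/L_\infty)$ cohomologically in terms of $(T_{\chi,L}\otimes\Lambda(\Gamma_L))^\vee(1)$.

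For part (i), I would set up the inflation-restriction sequence for $H^1(-,A[\frap^\infty])$ along $L_\infty/L$ with Galois group $\Gamma_L \cong \bbZ_p^{g+1}$, and show the relevant kernels and cokernels controlling the restriction map on Selmer groups vanish. The global term $H^1(\Gamma_L, A(L_\infty)[\frap^\infty])$ vanishes because $A(L_\infty)[\frap^\infty]$ is finite (indeed trivial after one checks, using \eqref{non-anomaly_cond} and that $\Gamma_L$ is pro-$p$ while the residual representation is suitably non-trivial) — more precisely, \eqref{non-anomaly_cond} gives $A(L_v)[\frap]=0$ at $v\mid p$, and the analogous local $H^0$-vanishing at ramified primes forces the local error terms to vanish. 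The upshot is that $\res : \Sel'_\frap(A/L) \to \Sel'_\frap(A/L_\infty)^{\Gamma_L}$ is injective, which is all that is claimed (we need only the inclusion, not surjectivity, so I do not have to control the global $H^2$ or the cokernel precisely).

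For part (ii), I would argue place-by-place that the local condition cutting out $\Sel'_\frap(A/L_\infty)$ is contained in the local condition defining $H^1_{\ccF_\Sigma^*}(L, (T_{\chi,L}\otimes\Lambda(\Gamma_L))^\vee(1))$. Away from $\frap$ the $\frap$-relaxed group imposes the Kummer (Bloch--Kato $f$) condition $\im(\kappa_v)$, whereas at primes not above $p$ the dual Greenberg/$\ccF_\Sigma^*$ condition is the unramified (finite) one; standard comparison (e.g. via \cite[Chapter I]{Ru00}-type arguments, or the local computations underlying Example \ref{selmerstructureexample}) shows the Kummer image is contained in the finite part, so the relaxed condition is at least as restrictive as required. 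At the primes in $\Sigma$ (resp. $\Sigma^c$) above $p$: the $\ccF_\Sigma^*$-condition at $\Sigma$ is the relaxed (full $H^1$) condition and at $\Sigma^c$ is the strict one, matching the dualization of Definition \ref{local_cond_selm_st}; on the $\frap$-relaxed side the condition at $v\mid\frap$ is relaxed while at $v\mid\frap^*$ it is the Kummer condition, and the non-anomaly hypothesis \eqref{non-anomaly_cond} is exactly what guarantees the Kummer image at $v\mid\frap^*$ lands in the strict/Greenberg local condition (it kills the potential $A(L_v)[\frap]$-contribution that would otherwise obstruct this). Assembling the local inclusions gives the global containment.

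The main obstacle I anticipate is the bookkeeping at the primes above $p$: one must carefully match the CM-type decomposition $T_p(A)=\bigoplus_{\frap\mid p}T_\frap(A)$ and the self-dual normalization $\psi = \rho_\wp^\sigma$ against the definition of $\Delta_\Sigma$ (with its $\Sigma$/$\Sigma^c$ asymmetry), and verify that the unramified twist relating $A[\frap^\infty]$ to $(T_{\chi,L})^\vee(1)$ does not disturb the Greenberg local conditions — it does not, since unramified twists preserve the finite/singular filtration, but this needs to be said. The non-anomaly condition \eqref{non-anomaly_cond} enters precisely here, ensuring the local Kummer maps are injective with image inside the prescribed submodules, so that no spurious torsion spoils the inclusion; once that is in hand, both parts are formal consequences of comparing local conditions.
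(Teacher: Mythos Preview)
Your proposal is essentially correct and follows the same route the paper indicates: the paper's proof is a one-line reference to \cite[Proposition~4.14]{buy14}, and what you have sketched is precisely the standard control-theory plus local-condition comparison that underlies that result. One small point of bookkeeping: in part (ii) you phrase the $p$-adic analysis in terms of a single pair $\frap,\frap^*$, but $\Sigma$ and $\Sigma^c$ each contain several primes, so the containment at places above $p$ must be checked for every $\fraq\in\Sigma^c$ (Kummer image inside the strict condition, using ordinarity together with \eqref{non-anomaly_cond}) and every $\fraq\in\Sigma$ (trivially, since $\ccF_\Sigma^*$ is relaxed there); otherwise the argument is as you describe.
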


\begin{proof} The proof is similar to \cite[Proposition 4.14]{buy14}.
\end{proof}

Since $L$ contains the reflex field of $K$, it follows that the Hasse-Weil $L$-series $L(A/L,s)$ of $A$ factors into a product of Hecke $L$-series
\begin{equation*}
 L(A/L,s)={\displaystyle\prod_{\tau\in I_K}}L(\psi_\tau,s), 
 \end{equation*}
where $I_K$ denotes all the embeddings of $K$ into $\overline{\bbQ}$. By making use of Theorem~\ref{maintheorem} as well as the interpolation property of the $p$-adic $L$-function, we deduce the following result regarding the Mordell-Weil group $A(L)$ of the abelian variety $A$.

\vspace{1.2mm}

Recall that we fixed an embedding $\tau\in\Sigma$ at the beginning of \S~\ref{heckecharofabelianvarieties}, which in turn fixes a prime ideal $\frap\in \Sigma$.

\begin{theorem}\label{lseriesthm} ---
      Assume that the hypotheses of Theorem~\ref{maintheorem} and the non-anomaly condition hold true. If $L(A/L, 1)\neq 0$, then both $A(L)$ and $\Sha(A/L)[\frap^\infty]$ are finite.
\end{theorem}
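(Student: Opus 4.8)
The plan is to combine the factorization $L(A/L,s)=\prod_{\tau\in I_K}L(\psi_\tau,s)$ with the main conjecture established in Theorem~\ref{maintheorem} and a control-theoretic argument via Proposition~\ref{control_theorem}. First I would observe that the hypothesis $L(A/L,1)\neq 0$ forces each factor $L(\psi_\tau,1)$ to be non-zero; in particular $L(\psi_\tau,1)\neq 0$ for the distinguished $\tau\in\Sigma$ corresponding to the prime $\frap$. By the interpolation property of the Katz $p$-adic $L$-function $\ccL_{p,\Sigma}^{\chi_\eta}$, non-vanishing of the relevant complex $L$-value (after accounting for the twists by characters $\eta\in\widehat{\Delta}$ that appear in the factorization over $L$) translates into non-vanishing of $\mathbbm{1}(\ccL_{p,\Sigma}^{\chi_\eta})$, i.e. the image of each $p$-adic $L$-function under the augmentation map $\Lambda_\ccW\to\frakO$ is a non-zero element of $\frakO$.

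\textbf{From $p$-adic $L$-values to finite Selmer groups.} Next I would feed this into Theorem~\ref{maintheorem}, which gives
\[
  {\rm char}_{\Lambda_\ccW}\left(H^1_{\ccF_\Sigma^*}\left(L,\bbT_{\chi,L}^\vee(1)\right)^\vee\right)=\prod_{\eta\in\widehat{\Delta}}\left(\ccL_{p,\Sigma}^{\chi_\eta}\right),
\]
so that applying the augmentation map and using that each $\mathbbm{1}(\ccL_{p,\Sigma}^{\chi_\eta})\neq 0$ shows the characteristic ideal of $H^1_{\ccF_\Sigma^*}\left(L,\bbT_{\chi,L}^\vee(1)\right)^\vee$ is not contained in the augmentation ideal. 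A standard control/specialization argument (Nakayama, using that the relevant $H^0$ and $H^2$ terms vanish under the non-anomaly hypothesis \eqref{non-anomaly_cond}) then yields that the specialization $H^1_{\ccF_\Sigma^*}\left(K,T_{\chi_\eta}^\vee(1)\right)$ is finite for every $\eta$; equivalently $H^1_{\ccF_\Sigma^*}\left(L,(T_{\chi,L})^\vee(1)\right)$ is finite. By Proposition~\ref{control_theorem}(ii) and (i) we get that $\Sel'_\frap(A/L)$ injects into $\Sel'_\frap(A/L_\infty)^{\Gamma_L}$, which in turn sits inside the finite group $H^1_{\ccF_\Sigma^*}\left(L,(T_{\chi,L}\otimes\Lambda(\Gamma_L))^\vee(1)\right)^{\Gamma_L}$; hence $\Sel'_\frap(A/L)$ is finite.

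\textbf{Descent to $A(L)$ and $\Sha$.} Finally, finiteness of the $\frap$-relaxed Selmer group $\Sel'_\frap(A/L)$ contains the usual $\frap^\infty$-Selmer group (the relaxed condition at primes above $\frap$ only enlarges it), so $\Sel_{\frap^\infty}(A/L)$ is finite; the Kummer sequence then gives finiteness of $A(L)\otimes\frakF/\frakO$ and of $\Sha(A/L)[\frap^\infty]$. Since $A(L)$ is finitely generated (Mordell--Weil) and $A(L)\otimes\frakF/\frakO$ is finite, $A(L)$ has rank zero, hence is finite; and $\Sha(A/L)[\frap^\infty]$ is finite by the same token. One should note that only the single prime $\frap\in\Sigma$ fixed at the start of \S\ref{heckecharofabelianvarieties} is needed for the conclusion, which is exactly what the statement asks for.

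\textbf{Main obstacle.} The delicate point is the control-theory step: verifying that specialization of the $\Lambda(\Gamma_L)$-adic Selmer group at the augmentation ideal is an isomorphism (or at least has finite kernel and cokernel), which is where the non-anomaly condition \eqref{non-anomaly_cond} and the vanishing of the relevant higher cohomology (as in Lemma~\ref{rank_zero_lemma}, adapted to the $L$-setting) are essential; a secondary subtlety is bookkeeping the precise set of Hecke characters $\psi_\tau$ and matching them with the branch characters $\chi_\eta$ so that the non-vanishing of $L(A/L,1)$ really does imply $\mathbbm{1}(\ccL_{p,\Sigma}^{\chi_\eta})\neq 0$ for the relevant $\eta$.
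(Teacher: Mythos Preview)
Your proposal is correct and follows essentially the same approach as the paper: the paper's proof is the one-line assertion that the result follows from Theorem~\ref{maintheorem}, Proposition~\ref{control_theorem}, and the interpolation property of the Katz $p$-adic $L$-functions, and your write-up is precisely a fleshed-out version of that sketch. Your identification of the control-theoretic step and the bookkeeping between the $\psi_\tau$ and the $\chi_\eta$ as the only genuinely delicate points is accurate.
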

\begin{proof}
 The assertion follows from Theorem \ref{maintheorem}, Proposition~\ref{control_theorem}, and the interpolation property of Katz $p$-adic $L$-functions given in \cite{ht93}. 
\end{proof}

\bibliographystyle{alpha} 
\bibliography{reference}

\end{document}